\numberwithin{equation}{section}
\def\beginn{\begin{eqnarray*}}
\def\endn{\end{eqnarray*}}
\def\beginy{\begin{eqnarray}}
\def\endy{\end{eqnarray}}
\def\begine{\begin{enumerate}}
\def\ende{\end{enumerate}}
\def\be{\begin{equation}}
\def\ee{\end{equation}}
\def\bea{\begin{eqnarray}}
\def\eea{\end{eqnarray}}
\numberwithin{equation}{section}
\theoremstyle{plain}
\newtheorem{thm}{Theorem}[section]
\newtheorem{lem}{Lemma}
\newtheorem{rmk}{Remark}
\newtheorem{deff}{Definition}
\newtheorem{cond}{Condition}
\newcommand{\non}{\nonumber \\}
\newcommand{\bbA}{{\bf A}}
\newcommand{\bbB}{{\bf B}}
\newcommand{\bbC}{{\bf C}}
\newcommand{\bbD}{{\bf D}}
\newcommand{\bbe}{{\bf e}}
\newcommand{\bbF}{{\bf F}}
\newcommand{\bbP}{{\bf P}}
\newcommand{\bbG}{{\bf G}}
\newcommand{\bbK}{{\bf K}}
\newcommand{\bbH}{{\bf H}}
\newcommand{\bbw}{{\bf w}}
\newcommand{\bbI}{{\bf I}}
\newcommand{\bbJ}{{\bf J}}
\newcommand{\bbM}{{\bf M}}
\newcommand{\bbn}{{\bf n}}
\newcommand{\bbQ}{{\bf Q}}
\newcommand{\bbR}{{\bf R}}
\newcommand{\bbr}{{\bf r}}
\newcommand{\bbT}{{\bf T}}
\newcommand{\bbU}{{\bf U}}
\newcommand{\bbV}{{\bf V}}
\newcommand{\bbv}{{\bf v}}
\newcommand{\bbW}{{\bf W}}
\newcommand{\bbX}{{\bf X}}
\newcommand{\bbx}{{\bf x}}
\newcommand{\bbY}{{\bf Y}}
\newcommand{\bby}{{\bf y}}
\newcommand{\bbZ}{{\bf Z}}
\newcommand{\bbL}{{\bf L}}
\newcommand{\ep}{\ensuremath{\epsilon}}
\begin{document}
\title{The Tracy-Widom law for the Largest Eigenvalue of F Type Matrix}
\author{X. Han, G. M. Pan and B. Zhang\\
Division of Mathematical Sciences \\ Nanyang Technological University,
 Singapore 
\vspace{0.2in}
}
\date{}
\maketitle
\begin{abstract}

 Let $\bbA_p=\frac{\bbY\bbY^*}{m}$ and $\bbB_p=\frac{\bbX\bbX^*}{n}$ be two independent random matrices where $\bbX=(X_{ij})_{p \times n}$ and $\bbY=(Y_{ij})_{p \times m}$ respectively consist of real (or complex) independent random variables with $\mathbb{E}X_{ij}=\mathbb{E}Y_{ij}=0$, $\mathbb{E}|X_{ij}|^2=\mathbb{E}|Y_{ij}|^2=1$. Denote by $\lambda_{1}$ the largest root of the determinantal equation $\det(\lambda \bbA_p-\bbB_p)=0$. We establish the Tracy-Widom type universality for $\lambda_{1}$ under some moment conditions on $X_{ij}$ and $Y_{ij}$ when $p/m$ and $p/n$ approach positive constants as $p\rightarrow\infty$.

{\small \bf KE\bbY WORDS}:  Tracy-Widom distribution, largest eigenvalue, sample covariance matrix, F matrix.
\end{abstract}

\section{Introduction}

High-dimensional data now commonly arise in many scientific fields such as genomics, image processing, microarray, proteomics and finance, to name but
a few. It is well-known that the classical theory of multivariate statistical analysis for the fixed dimension p and large sample size n may lose its validity when handling high-dimensional data. 
A popular tool in analyzing large covariance matrices and hence high-dimensional
data is random matrix theory. The spectral analysis of high-dimensional sample covariance matrices has
attracted considerable interests among statisticians, probabilitists and mathematicians since the
seminal work of Marcenko and Pastur \cite{MP67} about the limiting spectral distribution for a class of sample covariance matrices.  One can refer
to the monograph of Bai and Silverstein \cite{BS06} for a comprehensive summary and references therein.

The largest eigenvalue of covariance matrices plays an important role in multivariate statistical analysis such as principle component analysis (PCA), multivariate analysis of variance (MANOVA) and discriminant
analysis. One may refer to \cite{M1982} for more details. In this paper we focus on the largest eigenvalue of the F type matrices. Suppose that
\begin{equation}\label{b11}
\bbA_p=\frac{\bbY\bbY^*}{m},\quad\bbB_p=\frac{\bbX\bbX^*}{n}
\end{equation}
are two independent random matrices where $\bbX=(X_{ij})_{p \times n}$ and $\bbY=(Y_{ij})_{p \times m}$ respectively consist of real (or complex) independent random variables with $\mathbb{E}X_{ij}=\mathbb{E}Y_{ij}=0$ and $\mathbb{E}|X_{ij}|^2=\mathbb{E}|Y_{ij}|^2=1$. Consider the determinantal equation
\begin{equation}\label{b9}
\det(\lambda \bbA_p-\bbB_p)=0.
\end{equation}
When $\bbA_p$ is invertible, the roots to (\ref{b9}) are the eigenvalues of a F matrix
\begin{equation}\label{b12}
\bbA_p^{-1}\bbB_p,
\end{equation}
referred to as a Fisher matrix in the literature. The determinantal equation (\ref{b9}) 
is closely connected with the generalized eigenproblem
\begin{equation}\label{b10}
\det[\lambda (\bbA_p+\bbB_p)-\bbB_p]=0.
\end{equation}
We illustrate this in the next section. Many classical multivariate statistical tests are based on the roots of (\ref{b9}) or (\ref{b10}). For instance, one may use them to test the equality of two covariance matrices and the general linear hypothesis. In the framework of multivariate analysis of variance (MANOVA), $\bbA_p$ represents the within group covariance matrix while $\bbB_p$ means the between groups covariance matrix. A one-way MANOVA can be used to examine the hypothesis of equality of the mean vectors of interest.

Tracy and Widom in \cite{TW1994,TW1996} first discovered the limiting distributions of the largest eigenvalue for the large Gaussian Wigner ensemble, thus named as Tracy-Widom's law. Since their pioneer work study toward the largest eigenvalues of large random matrices becomes flourishing. To name a few we mention \cite{Johansson2000}, \cite{J2001}, \cite{K2007}, \cite{FP2009} and \cite{Soshnikov2002}. Among them we would mention El Karoui \cite{K2007} which handled the largest eigenvalue of Wishart matrices for the nonnull population covariance matrix and provided a kind of condition on the population covariance matrix to ensure the Tracy-Widow law (see (\ref{3.45}) below).

A follow-up to the above results is to establish the so-called universality property for generally distributed large random matrices. Specifically speaking, the universality property states that the limiting behavior of an eigenvalue statistic usually is not dependent on the distribution of the matrix entries. Indeed, the Tracy-Widom law has been established for the general sample covariance matrices under very general assumptions on the distributions of the entries of $\bbX$. The
readers can refer to \cite{TV2011}, \cite{TV2012}, \cite{LHY2011}, \cite{LAY2013}, \cite{PY11}, \cite{WK2012}, \cite{BPZ2014a}, \cite{JK14}, \cite{KY14} for some representative developments on this topic. When proving universality an important tool is the Lindeberg comparison strategy (see Tao and Vu in \cite{TV2011} and Erdos, Yau and Yin \cite{LHY2011}) and an important input when applying Lindeberg's comparison strategy is the strong local law developed by Erdos, Schlein and Yau in \cite{ESY20092} and Erdos, Yau and Yin in \cite{LHY2011}.

Johnstone in \cite{J08} proved that the largest root of (\ref{b11}) converges to Tracy and Widom's distribution of type one after appropriate centering and scaling when the dimension $p$ of the matrices $\bbA_p$ and $\bbB_p$ is even, $\lim\limits_{p\rightarrow\infty}p/m<1$ and $\bbB_p$ and $\bbA_p$ are both Wishart matrices. It is believed that the limiting distribution should not be affected by the dimension p. Indeed, numerical investigations both in \cite{J08} and \cite{J09} suggest that the Tracy and Widom approximation in the odd dimension case works as well as in the even dimension case. Besides, as it can be guessed, the Tracy and Widom approximation should not rely on the Gaussian assumption.  However, theoretical support for these remains open. Furthermore, when $\bbA_p$ is not invertible the limiting distribution of the largest root to (\ref{b11}) is unknown yet even under the gaussian assumption.

In this paper, we prove the universality of the largest root of (\ref{b9}) by imposing some moment conditions on $\bbA_p$ and $\bbB_p$. Specifically speaking we prove that the largest root of (\ref{b9}) converges in distribution to the Tracy and Widom law for the general distributions of the entries of $\bbX$ and $\bbY$ no matter what the dimension p is, even or odd. Moreover the result holds when $\lim\limits_{p\rightarrow\infty}p/m<1$ or $\lim\limits_{p\rightarrow\infty}p/m>1$, corresponding to invertible $\bbA_p$ and non-invertible $\bbA_p$. This result also implies the asymptotic distribution of the largest root of (\ref{b10}).

At this point it is also appropriate to mention some related work about the roots of (\ref{b9}). The limiting spectral distribution of the roots was derived by \cite{WK80} and \cite{BS06}. One may also find the limits of the largest root and the smallest root in \cite{BS06}. Central limit theorem about linear spectral statistics was established in \cite{ZSR}. Very recently, the so-called spiked F model has been investigated by \cite{DJO15} and \cite{WY}. We would like to point out that they prove the local asymptotic normality or asymptotic normality for the largest eigenvalue of the spiked F model, which is completely different from our setting.

 We conclude this section by outlining some ideas in the proof and presenting the structure of the rest of the paper. When $\bbA_p$ is invertible, the roots to (\ref{b9}) become those of the F matrix $\bbA_p^{-1}\bbB_p$ so that we may work on $\bbA_p^{-1}\bbB_p$. Roughly
 speaking, $\bbA_p^{-1}\bbB_p$ can be viewed as a kind of general sample covariance matrix $\bbT_n^{1/2}\bbX\bbX^*\bbT_n^{1/2}$ with $\bbT_n$ being a population covariance matrix by conditioning on $\bbB_p$. Denote the largest root of (\ref{b9}) by $\lambda_1$. The key idea is to break $\lambda_1$ into a sum of two parts as follows
 \begin{equation}\label{b13}
\lambda_1-\mu_p=(\lambda_1-\hat{\mu}_p)+(\hat{\mu}_p-\mu_p),
 \end{equation}
 where $\hat{\mu}_p$ is an appropriate value when $\bbB_p$ is given and $\mu_p$ is an appropriate value when $\bbB_p$ is not given (their definitions are given in the later sections). However we can not condition on $\bbB_p$ directly. Instead we first construct an appropriate event so that we can handle the first term on the right hand of (\ref{b13}) on the event to apply the earlier results about $\bbT_n^{1/2}\bbX\bbX^*\bbT_n^{1/2}$. Particularly we need to verify the condition (\ref{3.45}) below. Once this is done, the next step is to prove that the second term on the right hand of (\ref{b13}) after scaling converges to zero in probability. This approach is different from that used in the literature in proving universality for the local eigenvalue statistics.


Unfortunately, when $\bbA_p$ is not invertible we can not work on F matrices $\bbA^{-1}\bbB_p$ anymore. To overcome the difficulty we instead start from the determinantal equation (\ref{b9}). It turns out that the largest root $\lambda_1$ can then be linked to the largest root of some F matrix when $\bbX$ consists of Gaussian random variables. Therefore the result about F matrices $\bbA^{-1}\bbB_p$ is applicable.  
For general distributions we find that it is equivalent to working on such a ``covariance-type'' matrix
\begin{equation}\label{1231.1}
\bbD^{-\frac{1}{2}}\bbU_1\bbX(\bbI-\bbX^*\bbU^*_2(\bbU_2\bbX\bbX^*\bbU^*_2)^{-1}\bbU_2\bbX)\bbX^*\bbU^*_1\bbD^{-\frac{1}{2}}.
\end{equation}
The definitions of $\bbD$ and $\bbU_j,j=1,2$ are given in the later section. This matrix is much more complicated than general sample covariance matrices.  To deal with (\ref{1231.1}) we construct a $3\times3$ block linearization matrix
\begin{eqnarray}\label{1205.1*}
\bbH=\bbH(\bbX)= \left(
  \begin{array}{ccc}
    -z\bbI & 0 & \bbD^{-1/2}\bbU_1\bbX\\
    0 & 0 & \bbU_2\bbX\\
  \bbX^T\bbU^T_1\bbD^{-1/2} & \bbX^T \bbU^T_2& -\bbI\\
  \end{array}
\right),
\end{eqnarray}
where $z=E+i\eta$ is a complex number with a positive imaginary part. It turns out that the upper left block of the $3\times 3$ block matrix $\bbH^{-1}$ is the Stieltjes transform of (\ref{1231.1}) by simple calculations. We next develop the strong local law around the right end support $\mu_p$ by using a type of Lindeberg's comparison strategy raised in \cite{KY14} and then use it to prove edge universality by adapting the approach used in \cite{LHY2011} and \cite{BPZ2014a}.



The paper is organized as follows. Section 2 is to give the main results. A statistical application and Tracy-Widom approximation will be discussed in Section 3. Section 4 is devoted to proving the main result when $\bbA_p$ is invertible. In section 5 we will show the equivalence between the asymptotic means and asymptotic variances respectively given by \cite{J08} and by this paper. Sections 6 and 7 will prove the main result when $\bbA_p$ is not invertible. 

\section{The main results}
Throughout the paper we make the following conditions.
\begin{cond}\label{cond1}
Assume that $\{Z_{ij}\}$ are independent random variables with
$\mathbb{E} Z_{ij}=0, \mathbb{E }|Z_{ij}|^2=1.$
For all $k \in N$, there is a constant $C_k$ such that
$
\mathbb{E}|Z_{ij}|^k \leq C_k.
$
In addition, if $\{Z_{ij}\}$ are complex, then
$\mathbb{E} Z_{ij}^2=0.$
\end{cond}
We say that a random matrix $\bbZ=(Z_{ij})$ satisfies Condition \ref{cond1} if its entries $\{Z_{ij}\}$ satisfy Condition \ref{cond1}.

\begin{cond}\label{cond2}
Assume that random matrices
$\bbX=(\bbX_{ij})_{p,n}$ and $\bbY=(\bbY_{ij})_{p,m}$ are independent.
\end{cond}

\begin{cond}\label{cond3} Set $m=m(p) $ and $n=n(p)$. Suppose that
$$
\lim_{p \rightarrow \infty} \frac{p}{m}=d_1>0,\quad \lim_{p \rightarrow \infty} \frac{p}{n}=d_2>0,\quad 0<\lim_{p \rightarrow \infty} \frac{p}{m+n}< 1.
$$
\end{cond}
%
%
%
%



To present the main results uniformly we define $\breve{m}=\max\{m,p\}$, $\breve{n}=\min\{n,m+n-p\}$ and $\breve{p}=\min\{m,p\}$. Moreover let
\begin{equation}\label{5.1}
\sin^2(\gamma/2)=\frac{\min\{\breve{p},\breve{n}\}-1/2}{\breve{m}+\breve{n}-1},\ \ \sin^2(\psi/2)=\frac{\max\{\breve{p},\breve{n}\}-1/2}{\breve{m}+\breve{n}-1}.
\end{equation}
\begin{equation}\label{5.2}
\mu_{J,p}=\tan^2(\frac{\gamma+\psi}{2}),\ \
\sigma_{J,p}^3=\mu_{J,p}^3\frac{16}{(\breve{m}+\breve{n}-1)^2}\frac{1}{\sin(\gamma)\sin(\psi)\sin^2(\gamma+\psi)}.
\end{equation}
Formulas (\ref{5.2}) can be found in \cite{J08} when $d_1<1$.

We below present alternative expressions of $\mu_{J,p}$ and $\sigma_{J,p}$. To this end, define a modified density of the Marchenko-Pastur law \cite{MP67} (MP law) by
\begin{equation}\label{2.5}
\varrho_{p}(x)=\frac{1}{2 \pi x \frac{\breve{p}}{\breve{m}}} \sqrt{(b_{p}-x)(x-a_{p})} \mathbf{I}(a_{p} \leq x \leq b_{p}),
\end{equation}
where $a_p=(1-\sqrt{\frac{\breve{p}}{\breve{m}}})^2$ and $b_p=(1+\sqrt{\frac{\breve{p}}{\breve{m}}})^2$. 
 Let $\gamma_{1} \geq \gamma_{2} \geq \cdots \geq \gamma_{p}$ satisfy
\begin{equation}\label{2.6}
\int_{\gamma_{j}}^{+\infty}\varrho_{p}(x)dx=\frac{j}{p},
\end{equation}
with $\gamma_{0} =b_{p}$ and $\gamma_{p} =a_{p}$. 
Moreover suppose that $c_{p} \in [0, a_{p})$ satisfies the equation
\begin{equation}\label{2.7}
\int_{- \infty}^{+\infty} (\frac{c_{p}}{x-c_{p}})^2 \varrho_{p}(x)dx=\frac{n}{p}.
\end{equation}
One may easily check the existence and uniqueness of $c_{p}$. 
Define
\begin{equation}\label{2.8}
\mu_{p}=\frac{1}{c_{p}}(1+\frac{p}{n}\int_{- \infty}^{+\infty} (\frac{c_{p}}{x-c_{p}}) \varrho_{p}(x)dx)
\end{equation}
and
\begin{equation}\label{2.9}
\frac{1}{\sigma_{p}^3}=\frac{1}{c_{p}^3}(1+\frac{p}{n}\int_{- \infty}^{+\infty} (\frac{c_{p}}{x-c_{p}})^3 \varrho_{p}(x)dx).
\end{equation}
It turns out that (\ref{5.2}) and (\ref{2.8})-(\ref{2.9}) are equivalent subject to some scaling, which is verified in Section 5.



We also need the following moment match condition.
\begin{deff}[moment matching]\label{1222-1}
Let $\bbX^1=(x^1_{ij})_{M\times N}$ and $\bbX^0=(x^0_{ij})_{M\times N}$ be two  matrices satsfing Condition \ref{cond1} . We say that $\bbX^1$ matches $\bbX^0$ to order q, if  for the integers i,j,l and k satisfing  $1 \le i \le M$, $1 \le j \le N$ ,$0\le l,k$  and  $l+k\le q$, they have the relationship
\begin{eqnarray}\label{1129.1}
\mathbb{E}\left[(\Im x_{ij}^1)^l(\Re x_{ij}^1)^k\right]=\mathbb{E}\left[(\Im x_{ij}^0)^l(\Re x_{ij}^0)^k\right]+O(\exp(-(\log p)^C)),
\end{eqnarray}
where $C$ is some positive constant bigger than one, $\Re x$  is the real part and $\Im x$  is the imaginary part of x.
\end{deff}
Throughout the paper we use $\bbX^0$ to stand for the random matrix consisting of independent Gaussian random variables with mean zero and variance one.


Denote the type-i Tracy-Widom distribution by $F_i$, i=1, 2(see \cite{TW1996}). Set $\bbB_p=\frac{\bbX\bbX^*}{\breve{n}}$ and $\bbA_p=\frac{\bbY\bbY^*}{\breve{m}}$. We are now in a position to state the main results about F type matrices.
\begin{thm}\label{t1}
Suppose that the real random matrices $\bbX$ and $\bbY$ satisfy Conditions \ref{cond1}-\ref{cond3}. Moreover suppose that $0< d_2 < \infty$. 
Denote the largest root of $\det(\lambda \bbA_p-\bbB_p)=0$ by $\lambda_{1}$.
\begin{itemize}
\item[(i)] If $0<d_1<1$, then
\begin{equation}\label{2.16}
\lim_{p \rightarrow \infty} P( \frac{\frac{\breve{n}}{\breve{m}}\lambda_{1}-\mu_{J,p}}{\sigma_{J,p}} \leq s)=F_1(s).
\end{equation}
\item[(ii)] If $d_1>1$ and $\bbX$ matches the standard $\bbX^0$ to order 3, then (\ref{2.16}) still holds.
\end{itemize}
\end{thm}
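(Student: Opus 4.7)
The plan follows the two-case strategy previewed in the introduction.

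For part (i), since $d_1<1$ the matrix $\bbA_p$ is invertible with high probability, so the roots of $\det(\lambda\bbA_p-\bbB_p)=0$ coincide with the eigenvalues of the Fisher matrix $\bbA_p^{-1}\bbB_p$, whose non-zero spectrum agrees with that of $\bbB_p^{1/2}\bbA_p^{-1}\bbB_p^{1/2}$. Conditional on $\bbB_p$, the change of variables $\tilde\bbY := \bbB_p^{-1/2}\bbY$ exhibits the inverse $\bbB_p^{-1/2}\bbA_p\bbB_p^{-1/2} = \tilde\bbY\tilde\bbY^*/m$ as a generalized sample covariance matrix with deterministic population covariance $\bbT = \bbB_p^{-1}$; thus $\lambda_1$ is the reciprocal of the smallest eigenvalue of an object to which Tracy--Widom universality for $\bbT^{1/2}\bbZ\bbZ^*\bbT^{1/2}/m$ applies, provided $\bbT$ satisfies El Karoui's regularity condition (\ref{3.45}). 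I first construct a high-probability event $\Omega_0$ on which the spectrum of $\bbB_p$ is rigid enough to guarantee (\ref{3.45}); on $\Omega_0$, the existing universality result applied conditionally yields $(\lambda_1-\hat\mu_p)/\sigma_p \Rightarrow F_1$, where $\hat\mu_p$ is the $\bbB_p$-dependent edge location predicted by $\bbT$. The residual term $(\hat\mu_p-\mu_p)/\sigma_p$ is then shown to vanish in probability via local Marchenko--Pastur laws for $\bbB_p$ combined with a Taylor expansion of $\hat\mu_p$ in the empirical Stieltjes transform of $\bbB_p$ at the edge. Summing through the decomposition (\ref{b13}) gives the limit.

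For part (ii), $\bbA_p$ is singular and the route above fails; starting directly from the determinantal equation and using the spectral decomposition of $\bbY\bbY^*$, I reduce the problem to studying the largest eigenvalue of the covariance-type matrix (\ref{1231.1}). In the Gaussian case $\bbX=\bbX^0$, an orthogonal rotation of the columns of $\bbX^0$ identifies (\ref{1231.1}) with an F matrix of reduced dimensions, so part (i) provides the Tracy--Widom limit. For general $\bbX$ matching $\bbX^0$ to order three, the next step is to establish a strong local law for the Stieltjes transform of (\ref{1231.1}) down to the optimal scale near the right edge $\mu_p$, by exploiting the $3\times 3$ linearization $\bbH$ in (\ref{1205.1*}): the upper-left block of $\bbH^{-1}$ directly computes the needed Stieltjes transform, and Schur complement identities on $\bbH$ produce a self-consistent equation whose stability near the edge drives the estimates. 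The local law is then transferred from the Gaussian case to general $\bbX$ by a Lindeberg-type entry-by-entry swap in $\bbX$ along the lines of Knowles--Yin \cite{KY14}, using the three-moment matching to cancel the leading error terms. Finally, feed the local law into the Green function comparison machinery of \cite{LHY2011,BPZ2014a} to upgrade it to edge universality, yielding $(\breve n\lambda_1/\breve m - \mu_{J,p})/\sigma_{J,p} \Rightarrow F_1$ after reconciling the scalings via the equivalence between (\ref{5.2}) and (\ref{2.8})--(\ref{2.9}) verified in Section 5.

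The main obstacles are twofold. In part (i), showing that $\hat\mu_p - \mu_p = o_{\mathbb P}(\sigma_p)$ on the Tracy--Widom scale $\sigma_p \asymp p^{-2/3}$ is delicate, since it requires sharp control of the Stieltjes transform of $\bbB_p$ near $a_p$ together with uniform verification of (\ref{3.45}) on $\Omega_0$. In part (ii), the principal difficulty is proving the strong local law for the non-sample-covariance matrix (\ref{1231.1}): standard resolvent identities do not apply, which is exactly why the linearization $\bbH$ is introduced, and the Lindeberg swap must be carried out while simultaneously tracking derivatives of all nine blocks of $\bbH^{-1}$, using the three-moment matching to kill the leading corrections and high moments of the entries to control the remainder.
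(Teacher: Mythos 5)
Your part (ii) outline is essentially the paper's route (reduction to the covariance-type matrix (\ref{1231.1}), Gaussian case by rotation, the $3\times 3$ linearization, a KY-style interpolation proof of the strong local law, then Green function comparison), and I have no objection to it beyond the usual burden of actually carrying out the derivative bookkeeping. The problem is part (i).

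In part (i) you condition on $\bbB_p$ and set $\bbT=\bbB_p^{-1}$, writing $\bbB_p^{-1/2}\bbA_p\bbB_p^{-1/2}=\bbT^{1/2}\bbY\bbY^*\bbT^{1/2}/m$ and passing to the \emph{smallest} eigenvalue. This fails in general: the theorem only assumes $0<d_2<\infty$, so $\bbB_p=\bbX\bbX^*/n$ may be singular ($d_2>1$), in which case $\bbB_p^{-1/2}$ does not exist, and even for $d_2\le 1$ close to $1$ the population matrix $\bbB_p^{-1}$ is not bounded in operator norm, which the general-population Tracy--Widom results require. Moreover, the condition (\ref{3.45}) you invoke is the regularity condition at the \emph{right} edge and certifies Tracy--Widom for the \emph{largest} eigenvalue of $\bbT^{1/2}\bbZ\bbZ^*\bbT^{1/2}$; for your smallest-eigenvalue reduction you would need the left-edge analogue (plus a delta-method step for $\lambda_1=1/\lambda_{\min}$ and a reconciliation of the $m^{2/3}$ versus $n^{2/3}$ scalings), none of which is supplied by the input you cite. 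The paper sidesteps all of this by conditioning on $\bbA_p$ instead: since $d_1<1$, $\bbA_p$ is invertible with spectrum bounded away from $0$ and $\infty$ with high probability, $\bbA_p^{-1}\bbB_p$ has the same nonzero spectrum as $\bbA_p^{-1/2}\bbX\bbX^*\bbA_p^{-1/2}/n$, i.e.\ a sample covariance matrix with population $\bbT=\bbA_p^{-1}$, and the largest-eigenvalue results of \cite{BPZ2014a,KY14} apply directly once (\ref{3.45}) is verified on a rigidity event for the eigenvalues of $\bbA_p$ (Lemmas \ref{0312-1} and \ref{0313-2}). If you keep your conditioning you must restrict to $d_2<1$ strictly and prove a left-edge universality statement; otherwise switch the roles of $\bbA_p$ and $\bbB_p$ as the paper does. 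Your treatment of the residual term $\hat\mu_p-\mu_p$ via rigidity of the conditioned matrix is otherwise the right idea.
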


\begin{rmk}\label{r1*}
When $\bbX$ and $\bbY$ are complex random matrices, Theorem \ref{t1} still holds but the Tracy-Widom distribution $F_1(s)$ should be replaced by $F_2(s)$.

If $0<d_1<1$, then $\bbA_p$ is invertible. In this case the largest eigenvalue $\lambda_1$ is that of F matrices $\bbA^{-1}_p\bbB_p$. If $d_1>1$, then $\bbA_p$ is not invertible.
\end{rmk}

\begin{rmk}
Theorem \ref{t1} immediately implies the distribution of the largest root of $\det(\lambda (\bbB_p +\bbA_p)-\bbB_p )=0$.  In fact the largest root of $\det(\lambda (\bbB_p+\bbA_p)-\bbB_p )=0$ is $\frac{\lambda_{1}}{1+\lambda_{1}}$ if $\lambda_{1}$ is the largest root of the F matrices $\bbB_p \bbA^{-1}_p$ in Theorem \ref{t1} when $0<d_1<1$.

When $d_1>1$ the largest root of $\det\Big(\lambda (\bbB_p +\bbA_p)- \bbB_p \Big)=0$ is one with multiplicity $(p-m)$. We instead consider the $(p-m+1)$th largest root of $\det\Big(\lambda (\bbB_p +\bbA_p)- \bbB_p\Big)=0$. It turns out that the $(p-m+1)$th largest root of $\det\Big(\lambda (\bbB_p+\bbA_p)-\bbB_p \Big)=0$ is $\frac{\lambda_{1}}{1+\lambda_{1}}$ if $\lambda_{1}$ is the largest root of $det(\lambda \bbA_p-\bbB_p )=0$.

Moreover note  the equality
$$(\bbB_p+\bbA_p)^{-1} \bbB_p+(\bbB_p+\bbA_p)^{-1} \bbA_p=I.$$
If $\bbY$ matches $\bbX^0$ to order 3, then the smallest positive root of $\det(\lambda (\bbB_p+\bbA_p)- \bbB_p)=0$ also tends to type-1 Tracy-Widom distribution after appropriate centralizing and rescaling by Theorem \ref{t1} when $d_1>1$ and $d_2>1$.
\end{rmk}

We would like to point out that Johnstone \cite{J08} proved part (i) of Theorem (\ref{t1}) when $p$ is even, $\bbA_p$ and $\bbB_p$ are both Wishart matrices. Part (ii) of Theorem (\ref{t1}) is new even if $\bbA_p$ and $\bbB_p$ are both Wishart matrices.  When proving Theorem \ref{t1} we have indeed obtained different asymptotic mean and variance. Precisely we have proved that
\begin{equation}\label{2.17}
\lim_{p \rightarrow \infty} P(\sigma_{p} \breve{n}^{2/3} (\lambda_{1}-\mu_{p}) \leq s)=F_1(s)
\end{equation}
and that
\begin{equation}\label{a40}|\frac{\breve{m}}{\breve{n}}\mu_{J,p}-\mu_{p}|=O(p^{-1}),\quad \lim_{p \rightarrow \infty }\sigma_{p} \frac{\breve{m}}{\breve{n}^{1/3}}\sigma_{J,p}=1.
 \end{equation}
 (\ref{2.17}) and (\ref{a40}) imply Theorem \ref{t1}. 

\section{Application and Simulations}

This section is to discuss some applications of our universality results in high-dimensional statistical inference and
conduct simulations to check the quality of the approximations of our limiting law.

\subsection{Equality of two covariance matrices}
 Consider the model of the following form
 $$\bbZ_1=\mathbf{\Sigma}_1^{\frac{1}{2}}\bbX, \ \ \bbZ_2=\mathbf{\Sigma}_2^{\frac{1}{2}}\bbY,$$
 where \bbX \ and \bbY \ are $p\times n$ and $p \times m$ random matrices satisfying the conditions of Theorem \ref{t1}, $\mathbf{\Sigma}_1$ and $\mathbf{\Sigma}_2$ are $p\times p$ invertible population covariance matrices.
 We are interested in testing whether $\mathbf{\Sigma}_1=\mathbf{\Sigma}_2$. Formally, we focus on the following hypothesis testing problem
   $$\bbH_0:\mathbf{\Sigma}_1=\mathbf{\Sigma}_2 \ \ vs. \ \ \bbH_1:\mathbf{\Sigma}_1\neq \mathbf{\Sigma}_2.$$
   Under the null hypothesis we have
    $$det(\lambda \frac{\bbZ_2\bbZ_2^*}{\breve{m}}-\frac{\bbZ_1\bbZ_1^*}{\breve{n}})=0\Longleftrightarrow det(\lambda \frac{\bbY\bbY^*}{\breve{m}}-\frac{\bbX\bbX^*}{\breve{n}})=0,$$
     which implies that we can apply our theoretical result to the largest root of $det(\lambda \frac{\bbZ_2\bbZ_2^*}{\breve{m}}-\frac{\bbZ_1\bbZ_1^*}{\breve{n}})=0$ under the null hypothesis. By Theorem \ref{t1} we see that $\lambda_1$ tends to Tracy-Widom's distribution after centralizing and rescalling.

\subsection{Simulations}

We conduct some numerical simulations to check the accuracy of the distributional
approximations in Theorem \ref{t1} under various settings of $(p,m,,n)$ and the distribution of $\bbX$. We also study the power for the testing of equality of two covariance matrices.

As in \cite{J08} we below use $\ln(\lambda_1)$ to run simulations. To do so we first give its distribution. By \cite{J08} and (\ref{2.17}) we can find that
\begin{equation}\label{2.17s1}
\lambda_{1}= \mu_{p}+\frac{Z}{\sigma_{p}\breve{n}^{2/3}}+o_p(\breve{n}^{-2/3}),
\end{equation}
where $Z=F_1^{-1}(U)$ and $U$ is a $U(0,1)$ random variable. By Taylor's expansion we then have
\begin{equation}\label{2.17s2}
\ln(\lambda_{1})= \ln(\mu_{p})+\frac{Z}{\mu_{p}\sigma_{p}\breve{n}^{2/3}}+o_p(\breve{n}^{-2/3}).
\end{equation}
Recall $|\frac{m}{n}\mu_{J,p}-\mu_{p}|=O(p^{-1})$ and $\lim_{p \rightarrow \infty }\sigma_{p} \frac{m}{n^{1/3}}\sigma_{J,p}=1$ in Section 2. Summarizing the above we can find
\begin{equation}\label{2.17sln}
\lim_{p \rightarrow \infty} P(\sigma_{pln}  (\ln(\lambda_{1})-\mu_{pln}) \leq s)=F_1(s),
\end{equation}
where
\begin{equation}\label{2.19sln}
\mu_{pln}= \ln(\frac{\breve{m}}{\breve{n}}\mu_{J,p}),\quad
\sigma_{pln}=\frac{\mu_{J,p}}{\sigma_{J,p}}.
\end{equation}

\subsubsection{Accuracy of approximations for TW laws and size}

We conduct some numerical simulations to check the accuracy of the distributional
approximations in Theorem \ref{t1}, which include the size of the test as well.
\begin{table}[htbp]\label{table1}
  \centering
  \caption{Standard quantiles for several triples (p,m,n): Gaussian case}
    \begin{tabular}{rrrrrrrrrrr}
    \toprule
    & & \multicolumn{4}{c}{Initial triple $M_0$=(5,40,10)}  & \multicolumn{4}{c}{Initial triple $M_1$=(30,20,25)} & \\ \cmidrule(r){3-6} \cmidrule(r){7-10}
    Percentile & TW   &  $M_0$  &   $2M_0$    &   $3M_0$    &  $4M_0$    &    $M_1$   &  $2M_1$      &   $3M_1$     &   $4M_1$     & 2*SE \\
    \midrule
    -3.9  & 0.01  & 0.0208 & 0.0133 & 0.0124 & 0.0115 & 0.0017 & 0.0035 & 0.0048 & 0.0060 & 0.002 \\
    -3.18 & 0.05  & 0.0680 & 0.0601 & 0.0562 & 0.0582 & 0.0210 & 0.0276 & 0.0327 & 0.0370 & 0.004 \\
    -2.78 & 0.1   & 0.1176 & 0.1120 & 0.1088 & 0.1095 & 0.0608 & 0.0712 & 0.0808 & 0.0842 & 0.006 \\
    -1.91 & 0.3   & 0.3154 & 0.3030 & 0.3080 & 0.3084 & 0.2641 & 0.2744 & 0.2864 & 0.2909 & 0.009 \\
    -1.27 & 0.5   & 0.5139 & 0.5070 & 0.5051 & 0.5082 & 0.4839 & 0.4904 & 0.4960 & 0.4964 & 0.01 \\
    -0.59 & 0.7   & 0.7073 & 0.7154 & 0.7012 & 0.7111 & 0.7055 & 0.7031 & 0.7019 & 0.7005 & 0.009 \\
    0.45  & 0.9   & 0.9083 & 0.9058 & 0.9047 & 0.9090 & 0.9040 & 0.9010 & 0.9016 & 0.9003 & 0.006 \\
    0.98  & 0.95  & 0.9561 & 0.9544 & 0.9517 & 0.9557 & 0.9489 & 0.9530 & 0.9504 & 0.9498 & 0.004 \\
    2.02  & 0.99  & 0.9919 & 0.9909 & 0.9913 & 0.9919 & 0.9878 & 0.9887 & 0.9897 & 0.9901 & 0.002 \\
    \bottomrule
    \end{tabular}%
  \label{tab:addlabel}%
\end{table}%

Table 1 is done by R. We set two initial triples $(p,m,n)$ of $M_0=(5,40,10)$ and $M_1=(30,20,25)$ and then consider $2M_i,3M_i$ and $4M_i$, i=1,2. The triples $M_0$ and $M_1$ correspond to invertible $\bbY\bbY^*$ and noninvertible $\bbY\bbY^*$ respectively. For each case we generate 10000 (\bbX,\bbY) whose entries follow standard normal distribution. We calculate the largest root of $det(\lambda \frac{\bbZ_2\bbZ_2^*}{\breve{m}}-\frac{\bbZ_1\bbZ_1^*}{\breve{n}})=0$ to get $\ln(\lambda_{1})$ and renormalize it with $\mu_{pln}$ and $\sigma_{pln}$. In the ``Pecentile column'', the quantiles of $TW_1$ law corresponding to the ``TW'' column are listed. We state the values of the empirical distributions of the renormalized $\lambda_1$ for various triples at the corresponding quantiles in columns 3-10 and the standard errors based on binomial sampling are listed in the last column. QQ-plots corresponding to the triples $(20,160,40)$ and $(120,80,100)$ are also stated below.

\includegraphics[width=0.5\textwidth,angle=0]{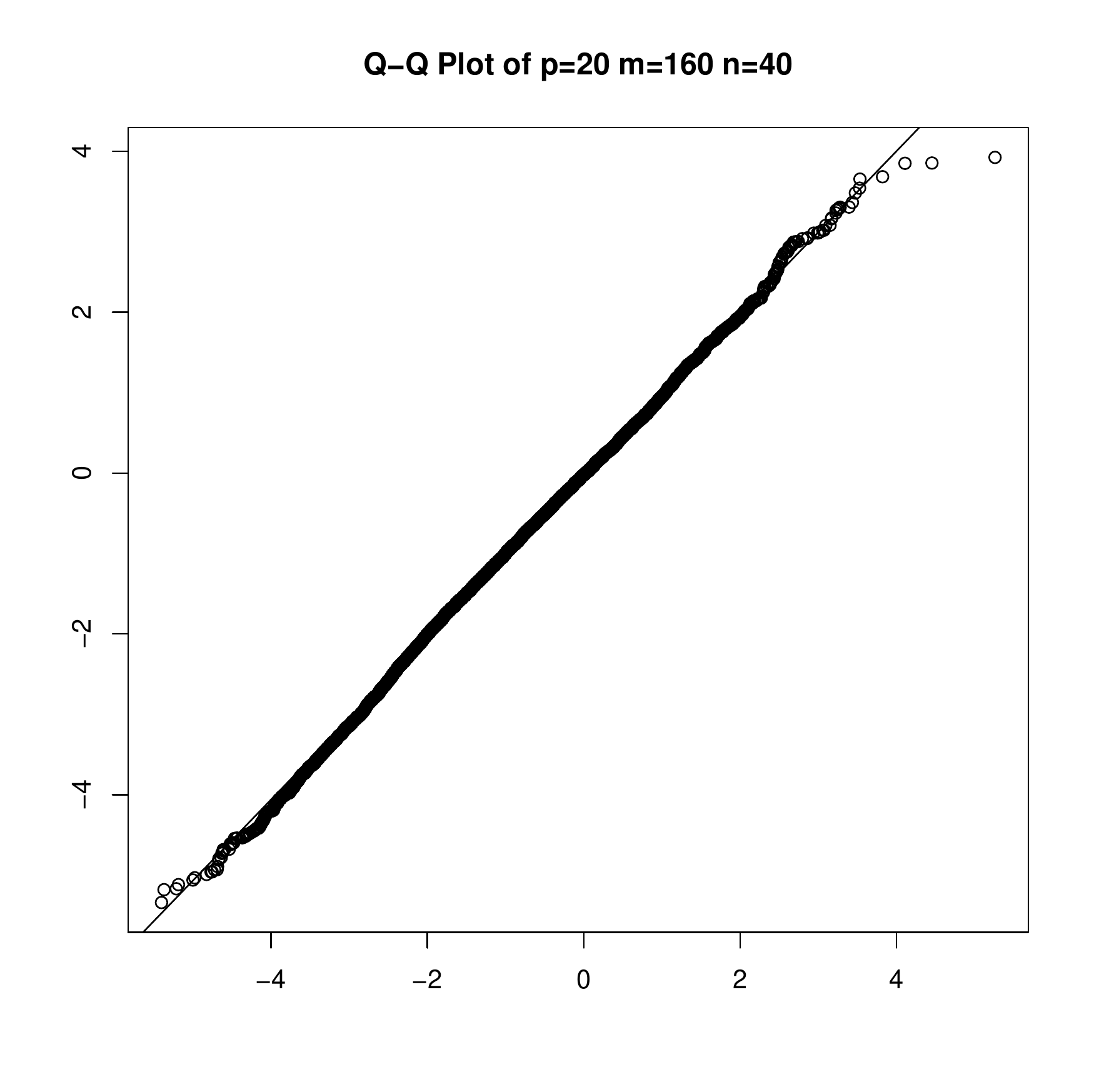}
\includegraphics[width=0.5\textwidth,angle=0]{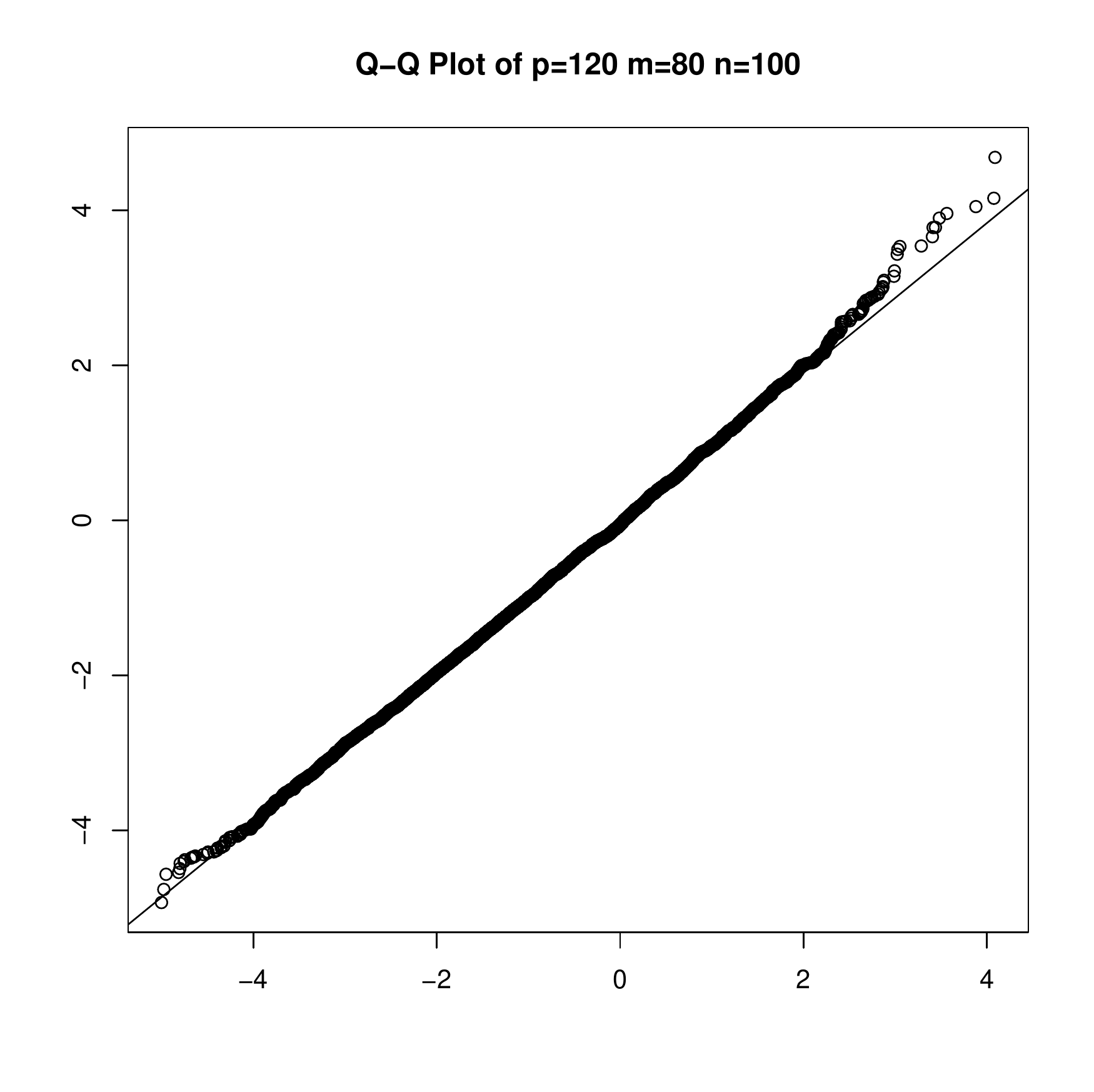}
 The next two tables and graphs are the same as table 1 and the corresponding graphs except that that we replace the gaussian distribution by the some discrete distribution and uniform distribution.
\begin{table}[htbp]
  \centering
  \caption{Standard quantiles for several triples (p,m,n): Discrete  distribution with the probability mass function P($\bbx=\sqrt{3}$)=P($\bbx=-\sqrt{3}$)=1/6 and P($\bbx$=0)=2/3. }
    \begin{tabular}{rrrrrrrrrrr}
   \toprule
    & & \multicolumn{4}{c}{Initial triple $M_0$=(5,40,10)}  & \multicolumn{4}{c}{Initial triple $M_1$=(30,20,25)} & \\ \cmidrule(r){3-6} \cmidrule(r){7-10}
    Percentile & TW   &  $M_0$  &   $2M_0$    &   $3M_0$    &  $4M_0$    &    $M_1$   &  $2M_1$      &   $3M_1$     &   $4M_1$     & 2*SE \\
    \midrule
    -3.9  & 0.01  & 0.0192 & 0.0132 & 0.0136 & 0.0123 & 0.0006 & 0.0031 & 0.0046 & 0.0047 & 0.002\\
    -3.18 & 0.05  & 0.0637 & 0.0581 & 0.0571 & 0.0573 & 0.0216 & 0.0302 & 0.0321 & 0.0356 & 0.004\\
    -2.78 & 0.1   & 0.1147 & 0.1101 & 0.1099 & 0.1088 & 0.0626 & 0.0733 & 0.0757 & 0.0824 & 0.006\\
    -1.91 & 0.3   & 0.3100  & 0.2966 & 0.3060 & 0.3029 & 0.2665 & 0.2721 & 0.2808 & 0.2827 & 0.009\\
    -1.27 & 0.5   & 0.5000   & 0.4959 & 0.4969 & 0.4996 & 0.4841 & 0.4834 & 0.4985 & 0.4899 & 0.01\\
    -0.59 & 0.7   & 0.7025 & 0.7013 & 0.7099 & 0.7018 & 0.6990 & 0.6992 & 0.7109 & 0.6975 & 0.009\\
    0.45  & 0.9   & 0.9107 & 0.9061 & 0.9071 & 0.9036 & 0.9014 & 0.9040 & 0.9059 & 0.9001 & 0.006\\
    0.98  & 0.95  & 0.9566 & 0.9546 & 0.9538 & 0.9546 & 0.9503 & 0.9527 & 0.9526 & 0.9512 & 0.004\\
    2.02  & 0.99  & 0.9929 & 0.994 & 0.9903 & 0.9914 & 0.9890 & 0.9908 & 0.9901 & 0.9894 & 0.002\\
    \bottomrule
    \end{tabular}%
  \label{tab:addlabel}%
\end{table}%

\includegraphics[width=0.5\textwidth,angle=0]{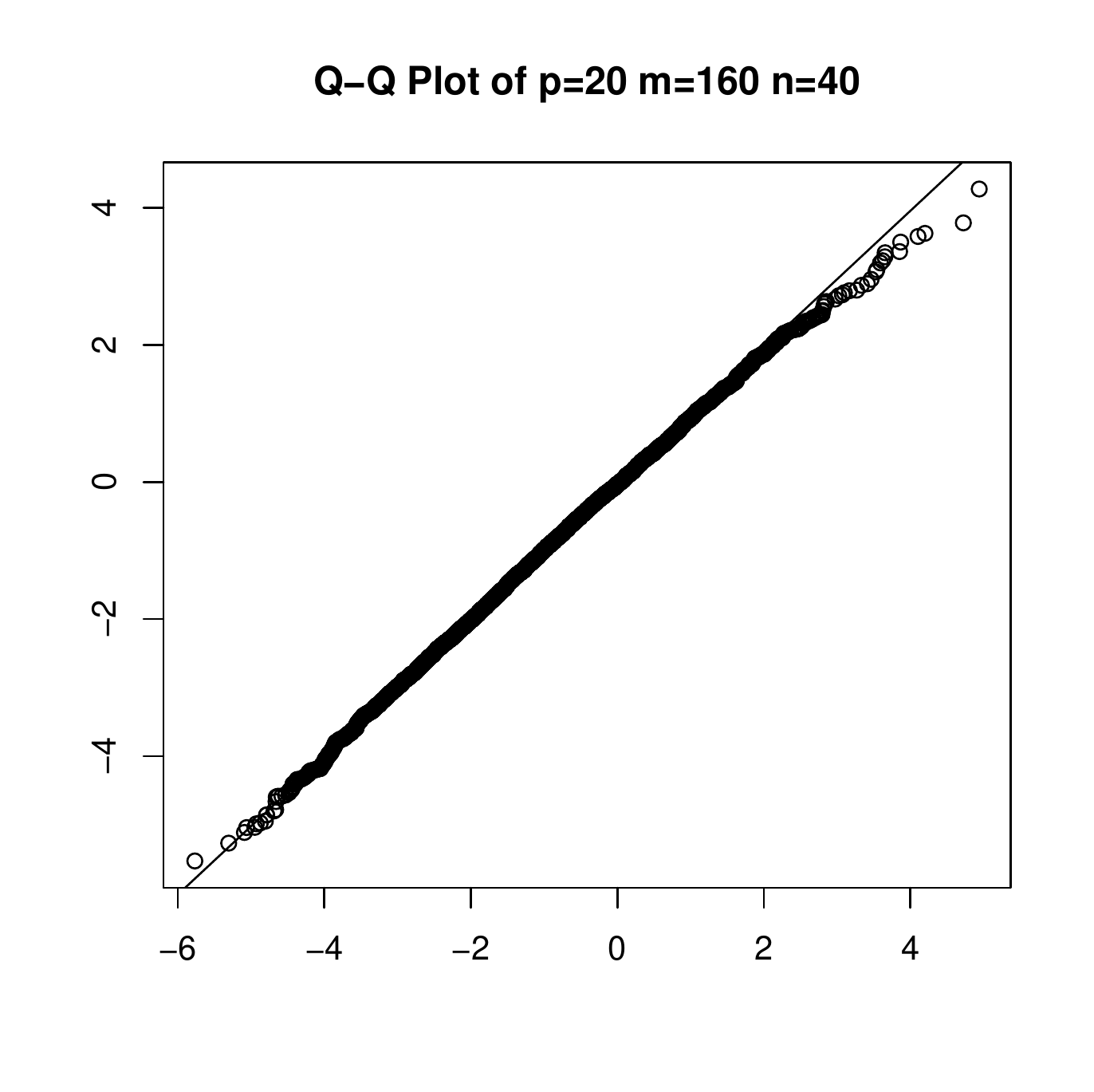}
\includegraphics[width=0.5\textwidth,angle=0]{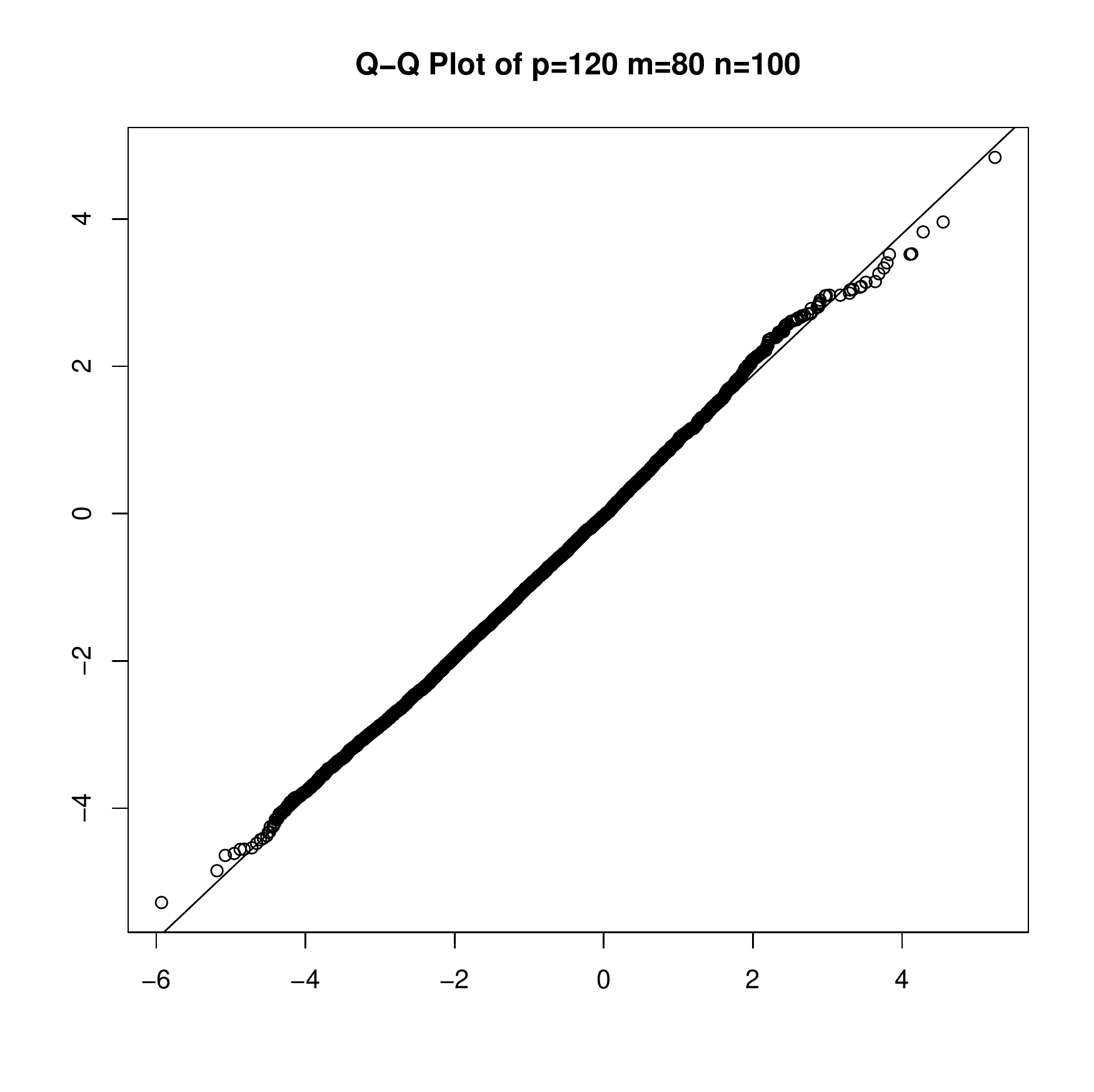}

\begin{table}[htbp]
  \centering
  \caption{Standard quantiles for several triples (p,m,n): Continuous uniform distribution $U(-\sqrt{3},\sqrt{3})$}
    \begin{tabular}{rrrrrrrrrrr}
    \toprule
    & & \multicolumn{4}{c}{Initial triple $M_0$=(30,80,40)}  & \multicolumn{4}{c}{Initial triple $M_1$=(80,40,50)} & \\ \cmidrule(r){3-6} \cmidrule(r){7-10}
    Percentile & TW   &  $M_0$  &   $2M_0$    &   $3M_0$    &  $4M_0$    &    $M_1$   &  $2M_1$      &   $3M_1$     &   $4M_1$     & 2*SE \\
    \midrule
    -3.9  & 0.01  & 0.0098 & 0.0117 & 0.0122 & 0.0120 & 0.0101 & 0.0087 & 0.0092 & 0.0096 & 0.002 \\
    -3.18 & 0.05  & 0.0612 & 0.0632 & 0.0606 & 0.0592 & 0.0514 & 0.0462 & 0.0492 & 0.0482 & 0.004 \\
    -2.78 & 0.1   & 0.1205 & 0.1243 & 0.1208 & 0.1197 & 0.1023 & 0.0942 & 0.1033 & 0.0992 & 0.006 \\
    -1.91 & 0.3   & 0.3644 & 0.3542 & 0.351 & 0.3432 & 0.3132 & 0.2946 & 0.3101 & 0.3017 & 0.009 \\
    -1.27 & 0.5   & 0.5767 & 0.5575 & 0.5563 & 0.5496 & 0.516 & 0.5073 & 0.5151 & 0.5069 & 0.01 \\
    -0.59 & 0.7   & 0.7728 & 0.7540 & 0.7443 & 0.7440 & 0.7182 & 0.7123 & 0.714 & 0.7171 & 0.009 \\
    0.45  & 0.9   & 0.9397 & 0.9243 & 0.9181 & 0.9202 & 0.9141 & 0.9068 & 0.9071 & 0.9059 & 0.006 \\
    0.98  & 0.95  & 0.9722 & 0.9672 & 0.9599 & 0.9614 & 0.9584 & 0.9538 & 0.9556 & 0.9534 & 0.004 \\
    2.02  & 0.99  & 0.9959 & 0.9941 & 0.993 & 0.9922 & 0.9932 & 0.9912 & 0.9919 & 0.9916 & 0.002 \\
    \bottomrule
    \end{tabular}%
  \label{tab:addlabel}%
\end{table}%
\includegraphics[width=0.5\textwidth,angle=0]{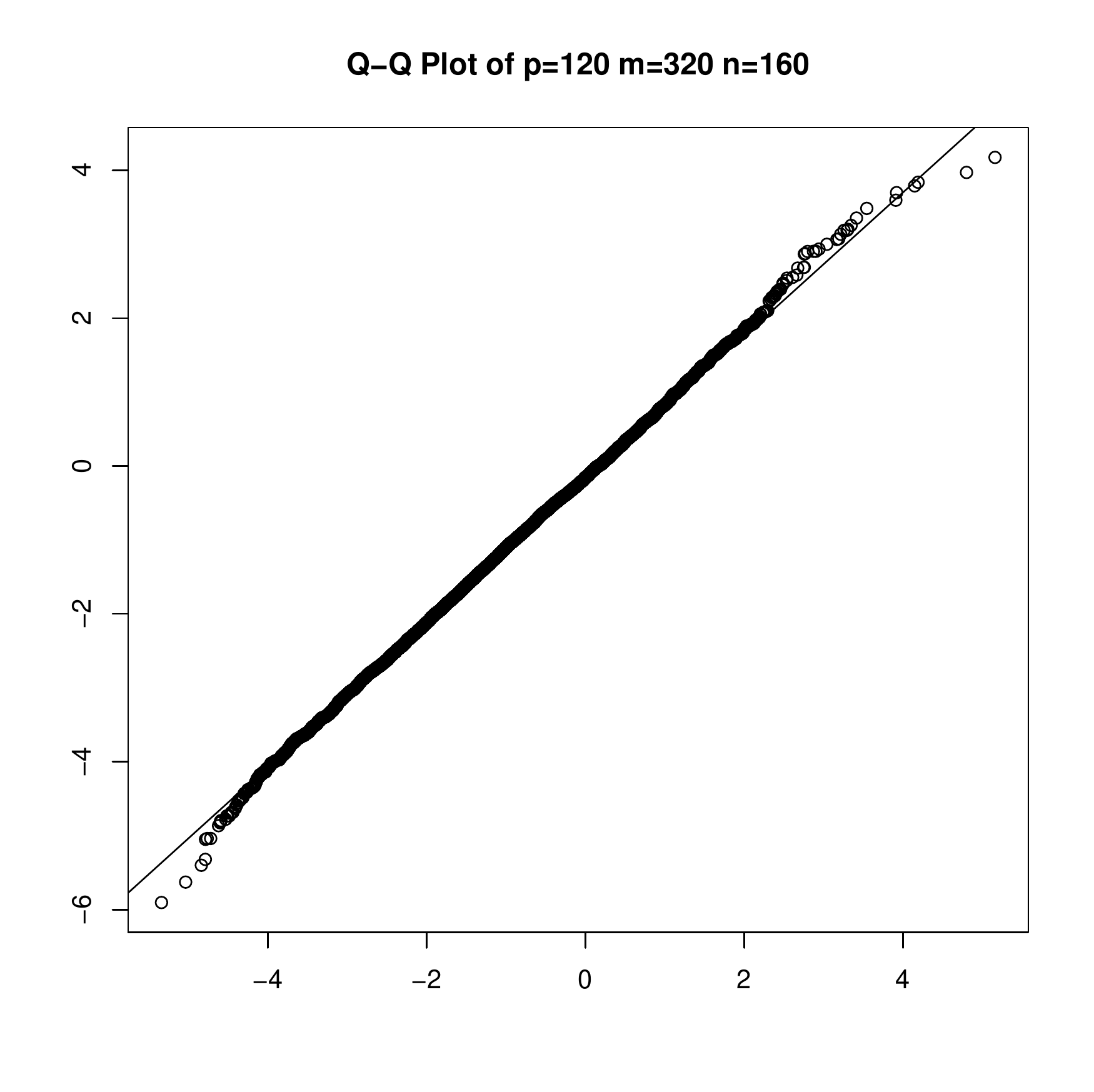}
\includegraphics[width=0.5\textwidth,angle=0]{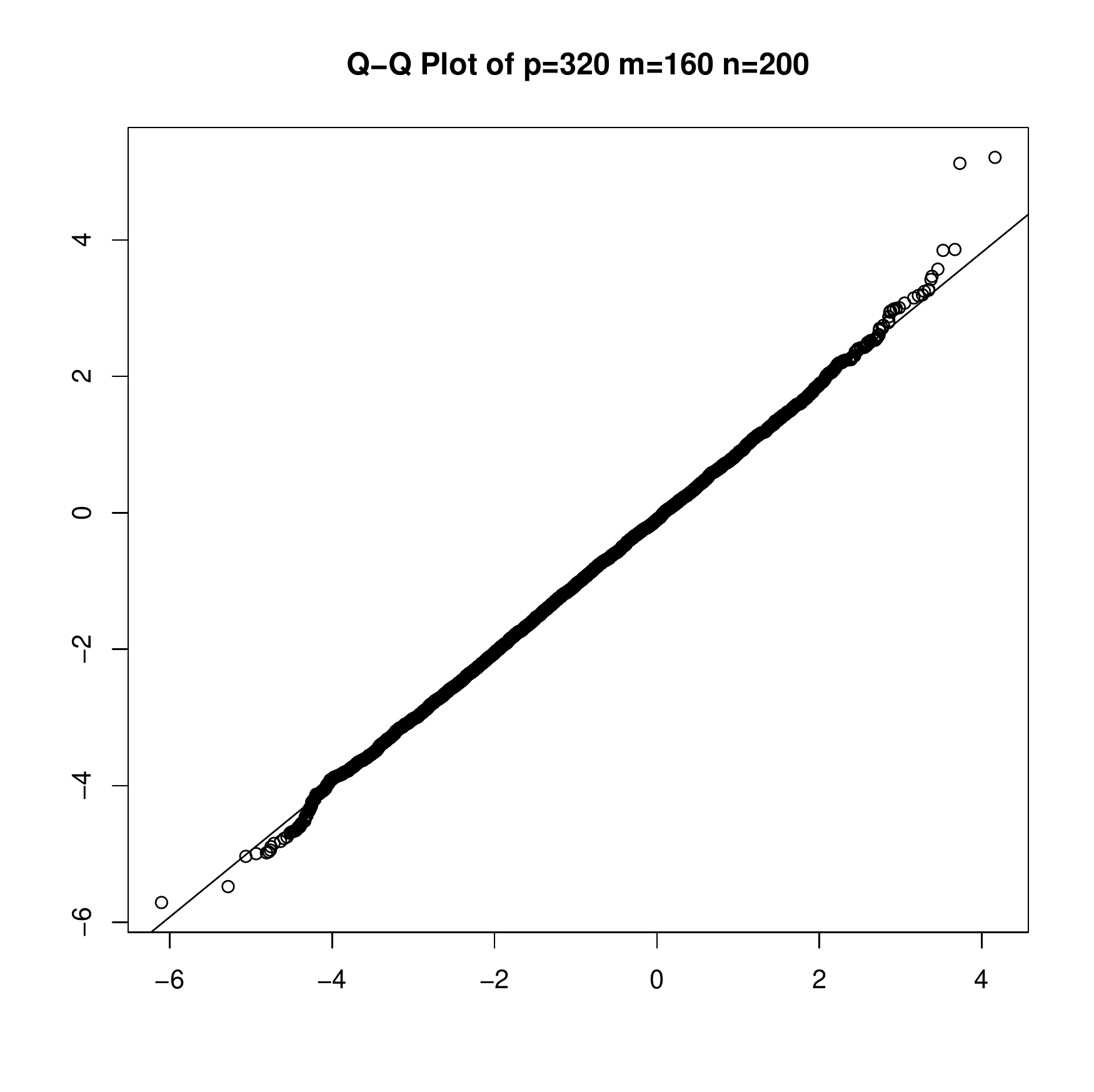}

When considering the test of equality of two population covariance matrices since $\mathbf{\Sigma}_1$ is assumed to be invertible in the null case $\mathbf{\Sigma}_1=\mathbf{\Sigma}_2$, without loss of generality, we may assume that $\mathbf{\Sigma}_1=\mathbf{\Sigma}_2=\bbI$.
 Therefore one may refer to Table one as well for the size of the test for the nominal significant levels.

\subsubsection{Power}

We study the power of the test and consider the alternative case
$$\bbZ_1=\mathbf{\Sigma}\bbX, \ \ \bbZ_2=\bbY,$$
where $\mathbf{\Sigma}\neq \bbI$.

When $\bbY\bbY^*$ is invertible we choose
$\mathbf{\Sigma}=\bbI+\tau\frac{\frac{p}{m}-r}{1-\frac{p}{m}}\bbe_1\bbe_1^T,$ where $r=\sqrt{\frac{p}{m}+\frac{p}{n}-\frac{p^2}{mn}}$.
The reason why we choose the factor $\frac{\frac{p}{m}-r}{1-\frac{p}{m}}$ is that when $\tau>1$ it is a spiked F matrix and the largest eigenvalue converges to normal distribution weakly by Proposition 11 of \cite{DJO15}.

When $\bbY\bbY^*$ is not invertible by Theorem 1.2 of \cite{BJ06} we can find out that the smallest non-zero eigenvalue of $\frac{1}{m}\mathbf{\Sigma}^{-1/2} \bbY\bbY^* \mathbf{\Sigma}^{-1/2}$ is not spiked for the above $\mathbf{\Sigma}$. So it is hard to get a spiked F matrix. Therefore we use another matrix
$$\mathbf{\Sigma}=\left( {\begin{array}{*{20}c}
  {1 } & {} & {} & {} & {} & {} & {}  \\
  {} & {\omega } & {} & {} & {} & {} & {}  \\
  {} & {} &  1  & {} & {} & {} & {}  \\
  {} & {} & {} & {\omega } & {} & {} & {}  \\
  {} & {} & {} & {} & \ddots & {} & {}  \\
  {} & {} & {} & {} & {} &  1  & {}  \\
  {} & {} & {} & {} & {} & {} & \omega  \\
\end{array} } \right).$$

In Tables 4-6 the data \bbX \ and \bbY \  are generated as in Tables 1-3 and the nominal significant level of our test is 5\%.

\begin{table}[htbp]\label{ta4}
  \centering
  \caption{Power of several triples(p,m,n): Gaussian distribution}
    \begin{tabular}{rrrrrrrrrr}
    \toprule
    &\multicolumn{4}{c}{Initial triple $M_0$=(5,40,10)}  & &\multicolumn{4}{c}{Initial triple $M_1$=(30,20,25)} \\ \cmidrule(r){2-5} \cmidrule(r){7-10}
   $\tau$&$M_0$  &   $2M_0$    &   $3M_0$    &  $4M_0$&  $\omega$  &    $M_1$   &  $2M_1$      &   $3M_1$     &   $4M_1$      \\
    \midrule
    0.5 &0.0672 & 0.0585 & 0.0563 & 0.0593 & 0.3&0.2178 & 0.4934 & 0.7071 & 0.8419 \\
    2 &0.2763 & 0.3801 & 0.4551 & 0.5067 & 0.6&0.0574 & 0.1332 & 0.2241 & 0.3106 \\
    4 &0.6291 & 0.816 & 0.9072 & 0.9567 & 2&0.1037 & 0.2166 & 0.3463 & 0.5029 \\
    6 &0.8162 & 0.9543 & 0.988 & 0.9967 & 3&0.2242 & 0.5521 & 0.8156 & 0.9537 \\
    \bottomrule
    \end{tabular}%
  \label{tab:addlabel}%
\end{table}%
\newpage
\begin{table}[htbp]\label{ta5}
  \centering
    \centering
  \caption{Power of several triples(p,m,n): Discrete distribution }
    \begin{tabular}{rrrrrrrrrr}
    \toprule
    &\multicolumn{4}{c}{Initial triple $M_0$=(5,40,10)}  & &\multicolumn{4}{c}{Initial triple $M_1$=(30,20,25)} \\ \cmidrule(r){2-5} \cmidrule(r){7-10}
   $\tau$&$M_0$  &   $2M_0$    &   $3M_0$    &  $4M_0$&  $\omega$  &    $M_1$   &  $2M_1$      &   $3M_1$     &   $4M_1$      \\
    \midrule
     0.5 &0.0674 & 0.0573 & 0.0576 & 0.0595 &0.3& 0.2101 & 0.4883 & 0.7024 & 0.8425 \\
    2 &0.3045 & 0.397 & 0.4561 & 0.5171 & 0.6&0.057 & 0.1382 & 0.2176 & 0.3078 \\
    4&0.647 & 0.8137 & 0.8984 & 0.9478 & 2&0.1055 & 0.2232 & 0.3504 & 0.4974 \\
    6 &0.8147 & 0.943 & 0.9813 & 0.9936 & 3&0.2254 & 0.5487 & 0.8211 & 0.9529 \\
    \bottomrule
    \end{tabular}%
  \label{tab:addlabel}%
\end{table}%
\begin{table}[htbp]\label{ta6}
  \centering
  \caption{Power of several triples(p,m,n): Continuous uniform distribution $U(-\sqrt{3},\sqrt{3})$}
    \begin{tabular}{rrrrrrrrrr}
    \toprule
    &\multicolumn{4}{c}{Initial triple $M_0$=(30,80,40)}  & &\multicolumn{4}{c}{Initial triple $M_1$=(80,40,50)} \\ \cmidrule(r){2-5} \cmidrule(r){7-10}
   $\tau$&$M_0$  &   $2M_0$    &   $3M_0$    &  $4M_0$&  $\omega$  &    $M_1$   &  $2M_1$      &   $3M_1$     &   $4M_1$      \\
    \midrule
    0.5 &0.2283 & 0.3188 & 0.3977 & 0.4662 & 0.3&0.9965 & 1.0000     & 1.0000     & 1.0000 \\
  2&1.0000 & 1.0000& 1.0000  & 1.0000 & 0.6&0.7112 & 0.9623 & 0.9964 & 0.9999 \\
   4&1.0000 & 1.0000 & 1.0000  & 1.0000 & 2&0.9257 & 1.0000    & 1.0000    & 1.0000 \\
    6&1.0000& 1.0000 &1.0000 & 1.0000 & 3&1.0000 &1.0000 & 1.0000     & 1.0000 \\
    \bottomrule
    \end{tabular}%
  \label{tab:addlabel}%
\end{table}%
\newpage
In Tables 4-6 we can find that when $\tau=0.5<1$ $(\mathbf{\Sigma}^{-1/2} \bbY\bbY^* \mathbf{\Sigma}^{-1/2})\bbX\bbX^*$ is not a spiked F matrix and the power is poor. When $\tau>1$ it is a spiked F matrix and the power increases
with the dimension and $\tau$. This phenomenon is due to the fact that it may not cause significant change to the largest eigenvalue of $F$ matrix when finite rank perturbation is weak enough.  This phenomenon has been widely discussed for sample covariance matrices, see \cite{FP2009} and \cite{BPZ2014a}. For the spiked F matrix one can refer to \cite{DJO15} and \cite{WY}. For the non-invertible case when $\mathbf{\Sigma}$ is far away from \bbI ($\omega=0.3$ or 3) the power becomes better. This is because when the empirical spectral distribution (ESD) of $\mathbf{\Sigma}$ is very different from the M-P law $\lambda_1$ may tend to another point $\mu_{\mathbf{\Sigma}}$ instead of $\mu_p$. Then we may gain good power because $n^{2/3}(\mu_{\mathbf{\Sigma}}-\mu_p)$ may tend to infinity.

\section{Proof of Part(i) of Theorem \ref{t1}}
\subsection{ Two key Lemmas }


%


This subsection is to first prove two key lemmas for proving part(i) of Theorem \ref{t1}. We begin with some notation and definitions. Throughout the paper we use $M,M_0,M_0',M_0'', M_{1},M_1''$ to denote some
generic positive constants whose values may differ from line to line. We also use $D$ to denote sufficiently large positive constants whose values may differ from line to line. We say that an event $\Lambda$ holds with high probability if for any big positive constant $D$
\begin{equation*}
P(\Lambda^c) \leq n^{-D},
\end{equation*}
 for sufficiently large n. Recall the definition of $\gamma_j$ in (\ref{2.6}). Let $c_{p,0}\in [0, a_{p})$ satisfy
\begin{equation}\label{2.10}
\frac{1}{p} \sum_{j=1}^p (\frac{c_{p,0}}{\gamma_{j}-c_{p,0}})^2=\frac{n}{p}.
\end{equation}
Existence of $c_{p,0}$ will be verified in Lemma \ref{0312-1} below. Moreover define
\begin{equation}\label{2.11}
\mu_{p,0}=\frac{1}{c_{p,0}}(1+\frac{1}{n}\sum_{j=1}^p (\frac{c_{p,0}}{\gamma_{j}-c_{p,0}})),\quad
\frac{1}{\sigma_{p,0}^3}=\frac{1}{c_{p,0}^3}(1+\frac{1}{n}\sum_{j=1}^p (\frac{c_{p,0}}{\gamma_{j}-c_{p,0}})^3).
\end{equation}

Set $\bbA_p=\frac{1}{m} \bbY\bbY^*$ and $\bbB_p=\frac{1}{n} \bbX\bbX^*$.
Rank the eigenvalues of the matrix $\bbA_{p}$ as $\hat{\gamma}_{1} \geq \hat{\gamma}_{2} \geq \cdots \geq \hat{\gamma}_{p}$. Let $\hat{c}_{p} \in [0, \hat{\gamma}_{p})$ satisfy
\begin{equation}\label{2.13}
\frac{1}{p} \sum_{j=1}^p (\frac{\hat{c}_{p}}{\hat{\gamma}_{j}-\hat{c}_{p}})^2=\frac{n}{p}.
\end{equation}
The existence of $\hat{c}_{p}$ with high probability will be given in Lemma \ref{0313-2} below. Moreover set
\begin{equation}\label{2.14}
\hat{\mu}_{p}=\frac{1}{\hat{c}_{p}}(1+\frac{1}{n}\sum_{j=1}^p (\frac{\hat{c}_{p}}{\hat{\gamma}_{j}-\hat{c}_{p}})),\quad
\frac{1}{\hat{\sigma}_{p}^3}=\frac{1}{\hat{c}_{p}^3}(1+\frac{1}{n}\sum_{j=1}^p (\frac{\hat{c}_{p}}{\hat{\gamma}_{j}-\hat{c}_{p}})^3).
\end{equation}
We now discuss the properties of $c_{p},c_{p,0},\hat{c}_{p},\mu_{p},\mu_{p,0},\hat{\mu}_{p},\sigma_p,\sigma_{p,0}$ defined (\ref{2.7})-(\ref{2.9}), (\ref{2.10})- (\ref{2.14}) in the next two lemmas. These lemmas are crucial to the proof strategy which transforms $F$ matrices into an appropriate sample covariance matrix. 

\begin{lem}\label{0312-1}
Under the conditions in Theorem \ref{t1}, there exists a constant $M_0$ such that
\begin{equation}\label{3.4}
\sup_p\{\frac{c_{p}}{a_{p}-c_{p}} \}\leq M_0,\quad \sup_p\{\frac{c_{p,0}}{a_{p}-c_{p,0}} \} \leq M_0,
\end{equation}
\begin{equation}\label{3.5}
\lim_{p \rightarrow \infty} n^{2/3}|\mu_{p}-\mu_{p,0}|=0,
\end{equation}
\begin{equation}\label{3.6}
\lim_{p \rightarrow \infty} \frac{\sigma_{p}}{\sigma_{p,0}}=1,\quad
\limsup_p  \frac{c_{p,0}}{a_{p}}<1.
\end{equation}

\end{lem}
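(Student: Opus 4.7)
My plan is to reduce everything to two ingredients: (i) a uniform separation of $c_p$ and $c_{p,0}$ from the endpoints $0$ and $a_p$ of their domain, and (ii) a Riemann-sum estimate of order $O(1/p)$ that controls the discrepancy between integrals against $\varrho_p$ and sums over the quantiles $\gamma_j$. For (i), both functions $F_p(c) := \int (c/(x-c))^2 \varrho_p(x)\,dx$ and $G_p(c) := (1/p)\sum_j (c/(\gamma_j-c))^2$ are continuous, strictly increasing on $[0,a_p)$, vanish at $c=0$, and blow up as $c \to a_p^-$, so (\ref{2.7}) and (\ref{2.10}) each have a unique solution. Uniform upper bounds on $c_p/(a_p-c_p)$ and $c_{p,0}/(a_p-c_{p,0})$, which amount to (\ref{3.4}) together with the second half of (\ref{3.6}), follow from the square-root edge behaviour $\varrho_p(x) \asymp \sqrt{x-a_p}$ near $a_p$: the substitution $t = (x-a_p)/(a_p-c)$ yields $F_p(c) \asymp (a_p-c)^{-1/2}$ as $c \to a_p^-$, with constants uniform in $p$ since $\breve p/\breve m \to d_1 \in (0,1)$ keeps $a_p$ and $b_p - a_p$ bounded away from $0$ and $\infty$. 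A parallel count using (\ref{2.6}) shows the number of quantiles in $[a_p,\,a_p+(a_p-c_{p,0})]$ is of order $p(a_p-c_{p,0})^{3/2}$, contributing at least $C(a_p-c_{p,0})^{-1/2}$ to $G_p(c_{p,0})$. Since both $F_p(c_p)$ and $G_p(c_{p,0})$ equal the bounded quantity $n/p$, both $a_p-c_p$ and $a_p-c_{p,0}$ remain above a positive constant. Matching lower bounds $c_p, c_{p,0} \geq c_0 > 0$ come from the trivial estimate $F_p(c), G_p(c) \leq 4c^2/a_p^2$ for $c \leq a_p/2$ (using $x - c \geq a_p/2$) versus the nonvanishing limit $n/p \to 1/d_2 > 0$.

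For (ii), I use the elementary bound that for any $g$ Lipschitz on $[a_p,b_p]$ with constant $L$,
\[
\Big|\int g(x)\varrho_p(x)\,dx - \frac{1}{p}\sum_{j=1}^p g(\gamma_j)\Big| = \Big|\sum_j \int_{\gamma_j}^{\gamma_{j-1}} (g(x)-g(\gamma_j))\varrho_p(x)\,dx\Big| \leq \frac{L(b_p - a_p)}{p},
\]
because each quantile interval carries $\varrho_p$-mass exactly $1/p$ and the widths telescope. Applied with $g(x) = (c/(x-c))^2$ for $c$ in the compact subinterval of $(0,a_p)$ furnished by step (i) (on which the Lipschitz constant is uniform in $p$), this yields $|F_p(c)-G_p(c)| = O(1/p)$. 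A direct computation of $F_p'(c) = \int 2cx/(x-c)^3 \varrho_p\,dx$ shows $F_p' \geq 2c_0 a_p/b_p^3 > 0$ on that interval, so the identity $F_p(c_p)-F_p(c_{p,0}) = G_p(c_{p,0})-F_p(c_{p,0}) = O(1/p)$ combined with the mean value theorem gives $|c_p - c_{p,0}| = O(1/p)$.

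Finally, for (\ref{3.5}) I decompose $\mu_p - \mu_{p,0} = (1/c_p - 1/c_{p,0}) + (p/n)\bigl(\int(x-c_p)^{-1}\varrho_p(x)\,dx - (1/p)\sum_j (\gamma_j-c_{p,0})^{-1}\bigr)$. The first term is $O(|c_p - c_{p,0}|/(c_p c_{p,0})) = O(1/p)$ by the previous step and the uniform lower bound on $c_p c_{p,0}$. For the second, I split it as ``integral versus sum at the common point $c_p$'', which is $O(1/p)$ by the Riemann-sum estimate in (ii) applied to $g(x) = (x-c_p)^{-1}$, plus ``sum at $c_p$ versus sum at $c_{p,0}$'', which is bounded by $|c_p - c_{p,0}|$ times a uniformly bounded factor, again $O(1/p)$. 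Hence $|\mu_p - \mu_{p,0}| = O(1/p)$, so $n^{2/3}|\mu_p - \mu_{p,0}| = O(p^{-1/3}) \to 0$, giving (\ref{3.5}). The same three-term decomposition applied to $\sigma_p^{-3}$ and $\sigma_{p,0}^{-3}$ via (\ref{2.9}) and (\ref{2.11}) yields $|\sigma_p^{-3} - \sigma_{p,0}^{-3}| = O(1/p)$, and since step (i) shows both quantities lie between positive constants, $\sigma_p/\sigma_{p,0} \to 1$. The main obstacle is step (i): extracting a $p$-independent lower bound on $a_p - c_{p,0}$ from the bounded right-hand side $n/p$ requires careful handling of the $3/2$-power accumulation of quantiles $\gamma_j$ at the soft edge, and everything else then reduces to routine Lipschitz and Riemann-sum bookkeeping.
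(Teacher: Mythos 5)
Your proof is correct, and its overall architecture coincides with the paper's: (a) show $c_p$ and $c_{p,0}$ stay uniformly inside a compact subinterval of $(0,a_p)$; (b) prove an $O(1/p)$ Riemann-sum comparison between the integral against $\varrho_p$ and the sum over the quantiles $\gamma_j$, exploiting that each quantile interval carries mass exactly $1/p$ so the interval widths telescope to $b_p-a_p$; (c) combine this with a uniform lower bound on the derivative of the defining function and the mean value theorem to get $|c_p-c_{p,0}|=O(1/p)$; (d) propagate this to $|\mu_p-\mu_{p,0}|=O(1/p)$ and $|\sigma_p^{-3}-\sigma_{p,0}^{-3}|=O(1/p)$, whence $n^{2/3}|\mu_p-\mu_{p,0}|=O(p^{-1/3})\to 0$. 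The one place where you genuinely diverge is step (a). The paper establishes $\limsup_p c_p/a_p<1$ by solving the quadratic equation for $c_p$ explicitly (formulas (\ref{4.6}) and (\ref{5.3})), verifies the bound by direct computation when $n=p$, and for $n\neq p$ asserts it follows "with tedious calculations"; it then transfers the bound to $c_{p,0}$ only after proving $|c_p-c_{p,0}|=O(1/p)$. You instead derive the separation softly from the square-root edge behaviour of the MP density, $F_p(c)\asymp (a_p-c)^{-1/2}$ as $c\to a_p^-$, together with the boundedness of $n/p$, and you run a parallel quantile-counting argument (accumulation of $\gamma_j$ at rate $\delta^{3/2}$ near $a_p$) to bound $c_{p,0}$ independently. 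This buys you a formula-free argument that covers all cases at once and fills in precisely the computation the paper omits; the small price is that your lower bound on the discrete sum near the edge implicitly needs $c_{p,0}$ bounded below (or the trivial case split $c_{p,0}<a_p/2$, in which $a_p-c_{p,0}>a_p/2$ automatically), which you should make explicit. A second cosmetic difference: you apply the mean value theorem to the continuous function $F_p$, whereas the paper applies it to the discrete $f_1$; the two are interchangeable given the $O(1/p)$ comparison.
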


\begin{proof}
The exact expression of $c_p$ in (\ref{2.7}) can be figured out under the conditions in Theorem \ref{t1} (see Section 5). In fact, when $n=p$, from (\ref{4.6}) below we have
$$c_p=\frac{(m-p)^2}{2(m+p)m}.$$
Recall the definition of $a_p$ in (\ref{2.5}). It follows that
$$\frac{c_p}{a_p}=\frac{(\sqrt{m}+\sqrt{p})^2}{2(m+p)},$$
which further implies that
\begin{equation}\label{3.9}
\limsup_p  \frac{c_{p}}{a_{p}}<1.
\end{equation}
In view of this, there are two constants $M_0>0$ and $M_0''>0$ such that
\begin{equation}\label{3.10}
\sup_p\{\frac{c_{p}}{a_{p}-c_{p}} \} \leq M_0,\quad
\inf_p\{c_{p} \} \geq M_0''.
\end{equation}


%

When $n \neq p$, from (\ref{5.3}) below we have
$$c_p=\frac{n(m+p)(m+n-p)-(m+2n-p) \sqrt{mnp(m+n-p)}}{m(n-p)(m+n)}.$$ Using the above expression for $c_p$ one may
similarly obtain (\ref{3.9})-(\ref{3.10}) as well but with tedious calculations and we ignore details here.

Now we define a function $f_1(x)$ by
\begin{equation}\label{3.13}
f_1(x)=\frac{1}{p} \sum_{j=1}^p (\frac{x}{\gamma_{j}-x})^2.
\end{equation}
We claim that there exists $c_{p,0} \in (0, a_p)$ so that
\begin{equation}\label{a1}
f_1(c_{p,0}) = \frac{n}{p}.
\end{equation}
Indeed, due to (\ref{3.9}) we obtain
\begin{equation*}
f_1(c_{p})=\frac{1}{p} \sum_{j=1}^p (\frac{c_{p}}{\gamma_{j}-c_{p}})^2 \geq  \sum_{j=1}^p\int_{\gamma_{j}}^{\gamma_{j-1}} (\frac{c_{p}}{x-c_{p}})^2 \varrho_{p}(x)dx=\int_{\gamma_{p}}^{\gamma_{0}} (\frac{c_{p}}{x-c_{p}})^2 \varrho_{p}(x)dx.
\end{equation*}
This, together with (\ref{2.5}) and (\ref{2.7}), implies that
\begin{equation}\label{3.14}
f_1(c_{p}) \geq \frac{n}{p}
\end{equation}
and
\begin{equation}\label{3.14*}
 \frac{n}{p}=\int_{\gamma_{p}}^{\gamma_{0}} (\frac{c_{p}}{x-c_{p}})^2 \varrho_{p}(x)dx \geq \frac{1}{p} \sum_{j=1}^p (\frac{c_{p}}{\gamma_{j-1}-c_{p}})^2.
\end{equation}
Note that $f_1(x)$ is a continuous function on $(0,a_p)$ and $f_1(0)=0$. These, together with (\ref{3.14}), ensure that there exists $c_{p,0} \in (0,c_p]$ so that (\ref{a1}) holds, as claimed.

We next develop an upper bound for the difference between $c_{p,0} $ and $c_p$. It follows from (\ref{3.14}) and (\ref{3.14*}) that
$$
|f_1(c_{p})-\frac{n}{p}|=\Big|\frac{1}{p} \sum_{j=1}^p (\frac{c_{p}}{\gamma_{j}-c_{p}})^2-\int_{\gamma_{p}}^{\gamma_{0}} (\frac{c_{p}}{x-c_{p}})^2 \varrho_{p}(x)dx\Big|
$$$$
 \leq \frac{1}{p} \Big|\sum_{j=1}^p \Big((\frac{c_{p}}{\gamma_{j}-c_{p}})^2-(\frac{c_{p}}{\gamma_{j-1}-c_{p}})^2\Big)\Big| \leq \frac{2c_{p}^2(b_{p}-c_{p})}{p(a_{p}-c_{p})^4} \sum_{j=1}^p |\gamma_{j}-\gamma_{j-1}|\leq \frac{2(M_0)^4b_{p}(b_{p}-a_{p})}{(M_0'')^2 p},
$$
where the last inequality uses (\ref{3.9})-(\ref{3.10}). With $M_1'=\frac{2(M_0)^4b_{p}(b_{p}-a_{p})}{(M_0'')^2}$ the above inequality becomes
\begin{eqnarray}\label{3.15}
&&|f_1(c_{p})-\frac{n}{p}| \leq \frac{M_1'}{p}.
\end{eqnarray}


Moreover taking derivative of $f(x)$ in (\ref{3.13}) yields
\begin{eqnarray}\label{3.16}
f_1'(x)=\frac{1}{p} \sum_{j=1}^p (\frac{2x^2}{(\gamma_{j}-x)^3}+\frac{2x}{(\gamma_{j}-x)^2}).
\end{eqnarray}
When $0 < x < c_p$ (smaller than $a_{p}$) and $f_1(x) \geq \frac{n}{2p}$,
\begin{eqnarray}\label{3.19}
f_1'(x) > \frac{1}{p} \sum_{j=1}^p \frac{2x}{(\gamma_{j}-x)^2} \geq \frac{n}{p x} \geq \frac{n}{p a_{p}}.
\end{eqnarray}
When $c_{p,0} < x \leq c_p$ we always have $f_1(x)\geq \frac{n}{p}$ via (\ref{a1}) because $f_1'(x)> 0$ by (\ref{3.16}).  Via (\ref{3.15}) and (\ref{3.19}) we then obtain from the mean value theorem that
\begin{equation}\label{3.20}
|c_{p,0}-c_{p}| \leq \frac{M_1' a_{p}}{n}.
\end{equation}
This, together with (\ref{3.10}), implies that there is a constant $M_1>0$ such that when $p$ is big enough,
\begin{equation}\label{3.21}
M_1 < c_{p,0} \leq c_{p}.
\end{equation}

We conclude from (\ref{2.8}), (\ref{2.11}), (\ref{3.10}), (\ref{3.20}) and (\ref{3.21}) that
$$
|\mu_{p}-\mu_{p,0}| \leq |\frac{1}{c_{p}}-\frac{1}{c_{p,0}}|+\frac{1}{n}\sum_{j=1}^p \max\{(|\frac{1}{\gamma_{j}-c_{p,0}}-\frac{1}{\gamma_{j}-c_{p}}|,|\frac{1}{\gamma_{j}-c_{p,0}}-\frac{1}{\gamma_{j-1}-c_{p}}|\}
$$$$
 \leq \frac{|c_{p}-c_{p,0}|}{M_1^2}+\frac{1}{n}\sum_{j=1}^p \frac{(|\gamma_{j}-\gamma_{j-1}|+|c_{p}-c_{p,0}|)M_0^2}{M_1^2}
 $$
 $$
\leq \frac{M_1' a_{p}}{n M_1^2}+\frac{b_{p}-a_{p}}{n M_1^2}+ \frac{p M_0^2 M_1' a_{p}}{n^2 M_1^2}=O(\frac{1}{p}).
$$
Similarly one can prove that
\begin{equation}\label{3.23}
|\frac{1}{\sigma_{p}^3}-\frac{1}{\sigma_{p,0}^3}|=O(\frac{1}{p}).
\end{equation}
(\ref{3.5}) and the first result in (\ref{3.6}) then follow. From (\ref{3.9}) and (\ref{3.20}) one can also obtain (\ref{3.4}) and the second result in (\ref{3.6}).
\end{proof}

\begin{lem}\label{0313-2}
Under the conditions in Theorem \ref{t1}, for any $\zeta>0$ there exists a constant $M_{\zeta} \geq M_0$ such that
\begin{equation}\label{3.24}
\sup_p\{\frac{\hat{c}_{p}}{\hat{\gamma}_{p}-\hat{c}_{p}} \}\leq M_{\zeta},\quad
%
\limsup_p  \frac{\hat{c}_{p}}{\hat{\gamma}_p}<1,
\end{equation}
and
\begin{equation}\label{3.26}
\lim_{p \rightarrow \infty} n^{2/3}|\hat{\mu}_{p}-\mu_{p,0}|=0,\quad
%
\lim_{p \rightarrow \infty} \frac{\hat{\sigma}_{p}}{\sigma_{p,0}}=1.
\end{equation}
hold with high probability. Indeed (\ref{3.24}) and (\ref{3.26}) hold on the event $S_{\zeta}$ defined by
\begin{eqnarray}\label{3.30}
S_{\zeta}=\{\forall j, 1 \leq j \leq p, |\hat{\gamma}_j-\gamma_j| \leq p^{\zeta} p^{-2/3} \tilde{j}^{-1/3} \},
\end{eqnarray}
where $\zeta$ is a sufficiently small positive constant and $
\tilde{j}=\min \{ \min\{m,p\}+1-j,j \}.
$
\end{lem}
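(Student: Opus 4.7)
The plan is to mirror the proof of Lemma \ref{0312-1}, but working with the empirical quantiles $\hat\gamma_j$ instead of the classical quantiles $\gamma_j$, and carrying out all the estimates deterministically on the rigidity event $S_\zeta$. The probability bound $P(S_\zeta^c) \leq n^{-D}$ is standard: it follows from the strong local Marchenko--Pastur law for sample covariance matrices (cited from the papers mentioned in the introduction, e.g.\ \cite{LHY2011}, \cite{PY11}), so the work is to prove the deterministic consequences on $S_\zeta$.

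Fix $\zeta>0$ small. The first step is to set up the sample analog $\hat f_1(x) = p^{-1}\sum_j (x/(\hat\gamma_j-x))^2$ of $f_1$ and show that on $S_\zeta$ it is uniformly close to $f_1$ on a suitable interval containing $c_{p,0}$. Using the bound $|\hat\gamma_j-\gamma_j|\le p^{\zeta-2/3}\tilde j^{-1/3}$ together with $\gamma_j-c_{p,0}\ge a_p-c_{p,0}\ge M_1/M_0$ from Lemma \ref{0312-1}, a term-by-term Taylor expansion gives
\begin{equation*}
|\hat f_1(c_{p,0})-f_1(c_{p,0})|\le \frac{C}{p}\sum_{j=1}^p\frac{|\hat\gamma_j-\gamma_j|}{(\gamma_j-c_{p,0})^3}\le \frac{C p^{\zeta-2/3}}{p}\sum_{j=1}^p \tilde j^{-1/3}=O(p^{\zeta-1}),
\end{equation*}
since $\sum_j \tilde j^{-1/3}=O(p^{2/3})$. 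Because $\hat f_1$ is continuous and strictly increasing on $(0,\hat\gamma_p)$ with derivative bounded below by (essentially) $n/(p a_p)$ on a neighborhood of $c_{p,0}$ (by the same argument that produced (\ref{3.19}), using that $\hat\gamma_j\ge \gamma_j-p^{\zeta-2/3}$ is still bounded away from $c_{p,0}$), the intermediate-value/mean-value argument of Lemma \ref{0312-1} gives a unique $\hat c_p\in(0,\hat\gamma_p)$ solving $\hat f_1(\hat c_p)=n/p$, and
\begin{equation*}
|\hat c_p-c_{p,0}|\le C\cdot\frac{|\hat f_1(c_{p,0})-f_1(c_{p,0})|}{\inf \hat f_1'}=O(p^{\zeta-1}).
\end{equation*}
Combined with the bounds (\ref{3.4}) and (\ref{3.6}) for $c_{p,0}$ and with the edge rigidity $|\hat\gamma_p-a_p|=O(p^{\zeta-2/3})=o(1)$, this immediately yields (\ref{3.24}).

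For (\ref{3.26}), I expand $\hat\mu_p-\mu_{p,0}$ into three pieces: (a) $1/\hat c_p-1/c_{p,0}$, which is $O(|\hat c_p-c_{p,0}|)=O(p^{\zeta-1})$; (b) the shift of the sum from replacing $c_{p,0}$ by $\hat c_p$, bounded by $(p/n)|\hat c_p-c_{p,0}|\cdot C = O(p^{\zeta-1})$; and (c) the main piece from replacing $\gamma_j$ by $\hat\gamma_j$, which Taylor expansion and the $S_\zeta$ bound reduce to
\begin{equation*}
\Bigl|\frac{1}{n}\sum_{j=1}^p \frac{c_{p,0}(\hat\gamma_j-\gamma_j)}{(\gamma_j-c_{p,0})^2}\Bigr|\le \frac{Cp^{\zeta-2/3}}{n}\sum_{j=1}^p \tilde j^{-1/3}=O\!\left(\frac{p^{\zeta-2/3}\cdot p^{2/3}}{n}\right)=O(p^{\zeta-1}).
\end{equation*}
Multiplying by $n^{2/3}\asymp p^{2/3}$ gives $n^{2/3}|\hat\mu_p-\mu_{p,0}|=O(p^{\zeta-1/3})\to 0$, provided $\zeta<1/3$, which is the assumed ``sufficiently small'' condition. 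An entirely analogous expansion of the cubic sum in (\ref{2.14}) vs.\ (\ref{2.11}), using the bound $|\hat c_p-c_{p,0}|\to 0$ for continuity and the same $\tilde j^{-1/3}$ rigidity, shows $\hat\sigma_p/\sigma_{p,0}\to 1$.

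The main obstacle, and the reason the statement is not just a trivial perturbation, is the precision $o(n^{-2/3})$ demanded in (\ref{3.26}). A naive pointwise bound $\sup_j|\hat\gamma_j-\gamma_j|\le p^{\zeta-2/3}$ applied to all $p$ summands would yield $O(p^{\zeta+1/3}/n)=O(p^{\zeta-2/3})$ in the sum of part (c), barely breaking the $n^{-2/3}$ barrier only because $\zeta$ can be chosen small. The crucial input is therefore the \emph{index-dependent} rigidity rate $p^{\zeta-2/3}\tilde j^{-1/3}$ on $S_\zeta$, which produces the factor $\sum_j \tilde j^{-1/3}=O(p^{2/3})$ rather than $p$, giving an extra $p^{-1/3}$ saving that makes the whole estimate close. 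Everything else is an adaptation of the calculations carried out for the classical quantiles in Lemma \ref{0312-1}.
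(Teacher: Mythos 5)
Your proposal is correct and follows essentially the same route as the paper: establish $|\hat f(c_{p,0})-f_1(c_{p,0})|=O(p^{\zeta-1})$ on $S_\zeta$ via the index-dependent rigidity rate (so that $\sum_j\tilde j^{-1/3}=O(p^{2/3})$), use a lower bound on $\hat f'$ and the mean value theorem to obtain existence of $\hat c_p$ with $|\hat c_p-c_{p,0}|=O(p^{\zeta-1})$, and then propagate this to $\hat\mu_p,\hat\sigma_p$ to get the $O(p^{\zeta-1})=o(n^{-2/3})$ rate. Your observation that the $\tilde j^{-1/3}$ refinement (rather than the uniform $p^{\zeta-2/3}$ bound) is the essential input matches the structure of the paper's estimates.
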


\begin{proof}
Define a function $\hat{f}(x)$ by
\begin{equation}\label{3.28}
\hat{f}(x)=\frac{1}{p} \sum_{j=1}^p (\frac{x}{\hat{\gamma}_j-x})^2.
\end{equation}
From (\ref{2.10}) and (\ref{3.13}) we have
\begin{equation}\label{3.29}
f_1(c_{p,0}) = \frac{n}{p}.
\end{equation}

The first aim is to find $\hat{c}_{p} \in [0, \hat{\gamma}_p)$ to satisfy
\begin{equation}\label{a2}
\hat{f}(\hat{c}_{p}) = \frac{n}{p}.
\end{equation}
When $\zeta$ is small enough we conclude from (\ref{3.28}), (\ref{3.29}) and (\ref{3.31}) that on the event $S_{\zeta}$
$$
|\hat{f}(c_{p,0})- \frac{n}{p}|=|\hat{f}(c_{p,0})-f_1(c_{p,0})|
=|\frac{1}{p} \sum_{j=1}^p ((\frac{c_{p,0}}{\hat{\gamma}_j-c_{p,0}})^2-(\frac{c_{p,0}}{\gamma_j-c_{p,0}})^2)|
$$$$ \leq \frac{c_{p,0}^2}{p} \max_j \{ \frac{|\hat{\gamma}_j+\gamma_j-2c_{p,0}|}{(\hat{\gamma}_j-c_{p,0})^2(\gamma_j-c_{p,0})^2} \}\sum_{j=1}^p |\hat{\gamma}_j-\gamma_j|
$$
\begin{equation}\label{3.32}
\leq\frac{c_{p,0}^2}{p}  \max_j \{ \frac{|p^{\zeta} p^{-2/3}\tilde{j}^{-1/3}|+2|\gamma_j-c_{p,0}|}{(-p^{\zeta} p^{-2/3}\tilde{j}^{-1/3}+\gamma_j-c_{p,0})^2(\gamma_j-c_{p,0})^2} \} \sum_{j=1}^p p^{\zeta} p^{-2/3} \tilde{j}^{-1/3}
=O(p^{\zeta-1}),
\end{equation}
%

where the last step uses the fact that via (\ref{3.4}) and (\ref{3.21})
\begin{eqnarray*}
\max_j \{ \frac{|p^{\zeta} p^{-2/3}\tilde{j}^{-1/3}+2\gamma_j-2c_{p,0}|}{(-p^{\zeta} p^{-2/3}\tilde{j}^{-1/3}+\gamma_j-c_{p,0})^2(\gamma_j-c_{p,0})^2} \} \leq M.
\end{eqnarray*}

Taking derivative of (\ref{3.28}) yields
\begin{eqnarray}\label{3.33}
&&\hat{f}'(x)=\frac{1}{p} \sum_{j=1}^p (\frac{2x^2}{(\hat{\gamma}_j-x)^3}+\frac{2x}{(\hat{\gamma}_j-x)^2}).
\end{eqnarray}
When $0< c_{p,0}-p^{-1/2}<x<c_{p,0}+p^{-1/2}$ from (\ref{3.4}) and (\ref{3.21}) we have on the event $S_{\zeta}$
\begin{eqnarray}
&&|\hat{f}(c_{p,0})-\hat{f}(x)|=\frac{1}{p} |\sum_{j=1}^p \frac{c_{p,0}^2(\hat{\gamma}_j-x)^2-x^2(\hat{\gamma}_j-c_{p,0})^2}{(\hat{\gamma}_j-x)^2(\hat{\gamma}_j-c_{p,0})^2}|\nonumber\\
&& = \frac{1}{p} |\sum_{j=1}^p \frac{(c_{p,0}-x)\hat{\gamma}_j[c_{p,0}(\hat{\gamma}_j-x)+x(\hat{\gamma}_j-c_{p,0})]}{(\hat{\gamma}_j-x)^2(\hat{\gamma}_j-c_{p,0})^2}|=O(p^{-1/2}).\label{a3}
\end{eqnarray}
%
When $0 < x < \hat{\gamma}_p$ we have
\begin{eqnarray*}
&&\hat{f}'(x)>\frac{1}{p} \sum_{j=1}^p \frac{2x}{(\hat{\gamma}_j-x)^2}= \frac{2}{x} \hat{f}(x)> \frac{2}{(c_{p,0}+p^{-1/2})} \hat{f}(x).
\end{eqnarray*}
In view of this, (\ref{3.32}) and (\ref{a3}) there exists $M_2>0$ so that
\begin{eqnarray}\label{3.35}
\hat{f}'(x)>M_2,
\end{eqnarray}
for sufficiently large $p$ when $0 < x < \hat{\gamma}_p$.
On the event $S_{\zeta}$, applying the mean value theorem yields
\begin{equation*}
\hat{f}(c_{p,0}-p^{-1/2}) < \hat{f}(c_{p,0})-M_2p^{-1/2}
\end{equation*}
and
\begin{equation*}
\hat{f}(c_{p,0}+p^{-1/2}) >\hat{f}(c_{p,0})+M_2p^{-1/2}.
\end{equation*}
It follows from (\ref{3.32}) that when $p$ is large enough,
\begin{equation*}
\hat{f}(c_{p,0}-p^{-1/2}) < \frac{n}{p} < \hat{f}(c_{p,0}+p^{-1/2}).
\end{equation*}
Since $\hat{f}(x)$ is continuous on $(0,\hat{\gamma}_p)$ there is $\hat{c}_{p} \in [0, \hat{\gamma}_p)$ ($c_{p,0}\leq c_p<a_p=\gamma_p$ by Lemma \ref{0312-1}) so that (\ref{a2}) holds
and
\begin{equation*}
c_{p,0}-p^{-1/2} < \hat{c}_{p} < c_{p,0}+p^{-1/2}.
\end{equation*}

From (\ref{3.32}), (\ref{a2}) and (\ref{3.35}) we have
\begin{eqnarray}\label{3.36}
&&|c_{p,0}-\hat{c}_{p}|=O(p^{\zeta-1}).
\end{eqnarray}
Recall $a_p=\gamma_p$. The second inequality in (\ref{3.24}) holds on the event $S_{\zeta}$ due to (\ref{3.6}), (\ref{3.30}) and (\ref{3.36}).
Likewise on the event $S_{\zeta}$ in view of (\ref{3.4}) and (\ref{3.36}) there exists a constant
$M_{\zeta} \geq M_0$ such that
\begin{equation}\label{3.37}
\sup_p\{\frac{\hat{c}_{p}}{\hat{\gamma}_p-\hat{c}_{p}} \}\leq M_{\zeta},
\end{equation}
the first inequality in (\ref{3.24}).

Due to $\hat{c}_{p}<\hat{\gamma}_p$ and the definition of $\hat f(x)$ in (\ref{3.28}) we have
\begin{equation*}
(\frac{\hat{c}_{p}}{\hat{\gamma}_p-\hat{c}_{p}})^2 \geq \frac{n}{p},
\end{equation*}
which implies that
\begin{equation}\label{3.38}
\hat{c}_{p} \geq \frac{\sqrt{\frac{n}{p}} \hat{\gamma}_p}{1+\sqrt{\frac{n}{p}}}.
\end{equation}
It follows from (\ref{2.11}) and (\ref{2.14}) that
\begin{eqnarray*}
|\mu_{p,0}-\hat{\mu}_{p}|&\leq &|\frac{1}{c_{p,0}}-\frac{1}{\hat{c}_{p}}|+\frac{1}{n}\sum_{j=1}^p |\frac{1}{\gamma_j-c_{p,0}}-\frac{1}{\hat{\gamma}_j-\hat{c}_{p}}| \\
&& \leq \frac{|c_{p,0}-\hat{c}_{p}|}{c_{p,0}\hat{c}_{p}}+ \frac{1}{n} \frac{\sum_{j=1}^p(|\gamma_j-\hat{\gamma}_j|+|c_{p,0}-\hat{c}_{p}|)}{(\gamma_p-c_{p,0})(\hat{\gamma}_p-\hat{c}_{p})}.
\end{eqnarray*}
We then conclude from (\ref{3.36})-(\ref{3.38}) that on the event $S_{\zeta}$
\begin{eqnarray}\label{3.39}
|\mu_{p,0}-\hat{\mu}_{p}| =O(p^{\zeta-1}).
\end{eqnarray}
It's similar to prove that
\begin{equation}\label{3.40}
|\frac{1}{\hat{\sigma}_{p}^3}-\frac{1}{\sigma_{p,0}^3}|=O(p^{\zeta-1}).
\end{equation}
(\ref{3.26}) then holds on the event $S_{\zeta}$.  Moreover, by Theorem 3.3 of \cite{PY11}, for any small $\zeta>0$ and any $D>0$,
\begin{eqnarray}\label{3.31}
P(S_{\zeta}^c)\leq p^{-D}.
\end{eqnarray}
The proof is therefore complete.

\end{proof}

%

\subsection{Proof of Part (i) of Theorem \ref{t1}}

\begin{proof}
 Recall the definition of the matrices $\bbA_p$ and $\bbB_p$ above (\ref{2.13}). Define a F matrix $\bbF=\bbA_p^{-1}\bbB_p$ whose largest eigenvalue is $\lambda_1$ according to the definition of $\lambda_1$ in Theorem \ref{t1}. It then suffices to find the asymptotic distribution of $\lambda_1$ to prove Theorem \ref{t1}.

%

Recalling the definition of the event $S_{\zeta}$ in (\ref{3.30}) we may write
\begin{eqnarray*}
P(\sigma_{p} n^{2/3} (\lambda_{1}-\mu_{p}) \leq s)=P\Big(\big(\sigma_{p} n^{2/3} (\lambda_{1}-\mu_{p}) \leq s\big) \bigcap \ S_{\zeta}\Big)+P\Big(\big(\sigma_{p} n^{2/3} (\lambda_{1}-\mu_{p}) \leq s\big) \bigcap S_{\zeta}^c\Big).
\end{eqnarray*}
This, together with (\ref{3.31}), implies that (\ref{2.17}) is equivalent to
\begin{equation}\label{3.51}
\lim_{p \rightarrow \infty} P\Big(\big(\sigma_{p} n^{2/3} (\lambda_{1}-\mu_{p}) \leq s\big) \bigcap \ S_{\zeta}\Big)=F_1(s).
\end{equation}

Write
\begin{eqnarray}\label{3.52}
&&\sigma_{p} n^{2/3} (\lambda_{1}-\mu_{p}) =\frac{\sigma_{p}}{\hat{\sigma}_{p}} \hat{\sigma}_{p} n^{2/3} (\lambda_{1}-\hat{\mu}_{p})+\sigma_{p}n^{2/3}(\hat{\mu}_{p}-\mu_{p}).
\end{eqnarray}
(see (\ref{2.13}) and (\ref{2.14}) for $\hat{\sigma}_{p}$ and $\hat{\mu}_{p}$). Note that the eigenvalues of $\bbA_p^{-1}$ are  $\frac{1}{\hat{\gamma}_1} \leq \frac{1}{\hat{\gamma}_2} \leq \cdots \leq \frac{1}{\hat{\gamma}_p}$.
Rewrite (\ref{2.13}) as 
\begin{equation}\label{3.53}
\frac{1}{p} \sum_{j=1}^p (\frac{\frac{1}{\hat{\gamma}_j}\hat{c}_{p}}{1-\frac{1}{\hat{\gamma}_p}\hat{c}_{p}})^2=\frac{n}{p}.
\end{equation}
Also recast (\ref{2.14}) as
\begin{equation}\label{3.54}
\hat{\mu}_{p}=\frac{1}{\hat{c}_{p}}(1+\frac{p}{n}\frac{1}{p}\sum_{j=1}^p\frac{\frac{1}{\hat{\gamma}_j}\hat{c}_{p}}{1-\frac{1}{\hat{\gamma}_p}\hat{c}_{p}}),\quad
\frac{1}{\hat{\sigma}_{p}^3}=\frac{1}{\hat{c}_{p}^3}(1+\frac{p}{n}\frac{1}{p}\sum_{j=1}^p\frac{\frac{1}{\hat{\gamma}_j}\hat{c}_{p}}{1-\frac{1}{\hat{\gamma}_p}\hat{c}_{p}})^3.
\end{equation}

Up to this stage the result about the largest eigenvalue of the sample covariance matrices $\bbZ\bbZ^*\bold\Sigma$ with $\bold\Sigma$ being the population covariance matrix comes into play where
$\bbZ$ is of size $p\times n$ satisfying Condition 1 and $\bold\Sigma$ is of size $p\times p$. A key condition to ensure Tracy-Widom's law for the largest eigenvalue is that
if $\rho \in (0,1/\sigma_1)$ is the solution to the equation
\begin{equation}\label{3.44}
\int(\frac{t\rho}{1-t\rho})^2dF^{\bold\Sigma}(t)= \frac{n}{p}
\end{equation}
then
\begin{equation}\label{3.45}
\lim \sup_p \rho\sigma_1  <1,
\end{equation}
(one may see \cite{K2007}, Conditions 1.2 and 1.4 and Theorem 1.3 \cite{BPZ2014a}, Conditions 2.21 and 2.22 and Theorem 2.18 of \cite{KY14}).
Here $F^{\bold\Sigma}(t)$ denotes the empirical spectral distribution of $\Sigma$ and $\sigma_1$ means the largest eigenvalue of $\bold\Sigma$.
Now given $\bbA_p$, if we treat $\bbA_p^{-1}$ as $\bold\Sigma$, then (\ref{3.45}) is satisfied on the event $S_{\zeta}$ due to (\ref{2.13}) and (\ref{3.24}) in Lemma \ref{0313-2}.
%
%
%
%
%
%
%
%
%
%
%
%
%
%
%
%
%
 It follows from Theorem 1.3 of \cite{BPZ2014a} and Theorem 2.18 of \cite{KY14} that
\begin{equation}\label{3.56}
\lim_{p \rightarrow \infty} P\Big(\big(\hat{\sigma}_{p} n^{2/3} (\lambda_{1}-\hat{\mu}_{p}) \leq s\big) \bigcap S_{\zeta}|\bbA_p\Big)=F_1(s),
\end{equation}
which implies that
\begin{equation}\label{3.56*}
\lim_{p \rightarrow \infty} P\Big(\big(\hat{\sigma}_{p} n^{2/3} (\lambda_{1}-\hat{\mu}_{p}) \leq s\big) \bigcap S_{\zeta}\Big)=F_1(s).
\end{equation}
Moreover by Lemmas \ref{0312-1} and \ref{0313-2} we obtain on the event $S_{\zeta}$
\begin{equation}\label{3.57}
\lim_{p \rightarrow \infty} \frac{\sigma_{p}}{\hat{\sigma}_{p}}=1
\end{equation}
and
\begin{equation}\label{3.58}
\lim_{p \rightarrow \infty} \sigma_{p}n^{2/3}(\hat{\mu}_{p}-\mu_{p})=0.
\end{equation}
(\ref{3.51}) then follows from (\ref{3.52}), (\ref{3.56})-(\ref{3.58}) and Slutsky's theorem. The proof is complete. 
%
\end{proof}

\section{Proof of (\ref{a40})}

\begin{proof}
This section is to verify (\ref{a40}) and give an exact expressions of $c_p, \mu_{p}$ and $\sigma_{p}$ in (\ref{2.7})-(\ref{2.9}) at the mean time. 
We first introduce the following notation. Let $\breve{m}=\max\{m,p\}$, $\breve{n}=\min\{n,m+n-p\}$ and $\breve{p}=\min\{m,p\}$.
Choose $0< \alpha_{p} < \frac{\pi}{2} $ and $0< \beta_{p} < \frac{\pi}{2}$ to satisfy
\begin{equation}\label{2.18s}
\sin^2(\alpha_{p})=\frac{\breve{p}}{\breve{m}+\breve{n}},\quad \sin^2(\beta_{p})=\frac{\breve{n}}{\breve{m}+\breve{n}}.
\end{equation}
Define
\begin{equation}\label{2.19s}
\mu_{p}=\frac{\breve{m}}{\breve{n}} \tan^2(\alpha_{p}+\beta_{p})
\end{equation}
and
\begin{equation}\label{2.20s}
\frac{1}{\sigma_{p}^3}=\mu_{p}^3\frac{16\breve{n}^2}{(\breve{m}+\breve{n})^2}\frac{1}{\sin(2\beta_{p})\sin(2\alpha_{p})\sin^2(2\beta_{p}+2\alpha_{p})}.
\end{equation}

We below first verify the equivalence between (\ref{2.8})-(\ref{2.9}) and (\ref{2.19s})-(\ref{2.20s}). For definiteness, consider $p<m$ in what follows and the case $p>m$ can be discussed similarly.
Denote by $s(z)$ the Stieltjes transform of the MP law $\rho_{p}(x)$
$$
s(z)=\int\frac{\rho_{p}(x)}{x-z}dx, \quad Im(z)>0
$$
and set
\begin{equation}\label{4.1}
g_{p}(x)=\frac{1-\frac{p}{m}-x-\sqrt{(x-1-\frac{p}{m})^2-4\frac{p}{m}}}{2\frac{p}{m}x},
\end{equation}
which is the function obtained from $s(z)$ by replacing $z$ with $x$ (one may see (3.3.2) of \cite{BS06}).
Evidently, the derivative of $s(z)$ is
$$ s'(z)=\int\frac{\rho_{p}(x)}{(x-z)^2}dx.$$
Note that $c_p$ is outside the support of the MP law (see Lemma \ref{0312-1}). In view of the above and (\ref{2.7}) we obtain
\begin{equation}\label{4.2}
c^2_{p}g_{p}'(c_{p})=\frac{n}{p},
\end{equation}
which further implies that
\begin{equation}\label{4.3}
\sqrt{(c_{p}-1-\frac{p}{m})^2-4\frac{p}{m}}=\frac{(1-\frac{p}{m})^2-(1+\frac{p}{m})c_{p}}{\frac{2n}{m}+1-\frac{p}{m}}.
\end{equation}

When $n \neq p$, solving (\ref{4.3}) and disregarding one of the solutions bigger than $a_p$ we have
$$c_{p}= \frac{(\frac{m+p}{m})(\frac{p}{m}+\frac{p}{n}-\frac{p^2}{mn})-\sqrt{(1+\frac{p}{m})^2(\frac{p}{m}+\frac{p}{n}-\frac{p^2}{mn})^2+(1-\frac{p}{m})^2
(\frac{p}{m}+\frac{p}{n}-\frac{p^2}{mn})(\frac{p}{n}-1)(\frac{p}{m}+\frac{p}{n})}}{(1-\frac{p}{n})(\frac{p}{m}+\frac{p}{n})}
$$
\begin{equation}
=\frac{n(m+p)(m+n-p)-(m+2n-p) \sqrt{mnp(m+n-p)}}{m(n-p)(m+n)}.\label{5.3}
\end{equation}
This, together with (\ref{2.8}), yields
$$\mu_{p}=\frac{1}{c_{p}}+\frac{p}{n} g_{p}(c_{p}) =\frac{1}{c_{p}}\frac{2(m+n-p)}{m+2n-p}-\frac{(n-p)m}{(m+2n-p)n}
$$$$=\frac{m}{n} \frac{(n-p)(n(m+n-p)+\sqrt{mnp(m+n-p)})}{n(m+p)(m+n-p)-(m+2n-p)\sqrt{mnp(m+n-p)}}
$$$$=\frac{m}{n} \frac{(\sqrt{(m+n-p)n}+\sqrt{mp})^2}{(\sqrt{m(m+n-p)}-\sqrt{np})^2}.
$$
By (\ref{2.18s}) one may obtain
$$\frac{\sqrt{(m+n-p)n}+\sqrt{mp}}{\sqrt{m(m+n-p)}-\sqrt{np}}=\frac{\frac{\sqrt{(m+n-p)n}+\sqrt{mp}}{m+n}}{\frac{\sqrt{m(m+n-p)}-\sqrt{np}}{m+n}}
$$$$=\frac{\cos \alpha_{p} \sin \beta_{p}+\sin \alpha_{p} \cos \beta_{p}}{\cos \alpha_{p} \cos \beta_{p}-\sin \alpha_{p} \sin \beta_{p}}
=\tan (\alpha_{p}+\beta_{p}).
 $$
It follows that \begin{equation}\label{4.4}
\mu_{p}=\frac{m}{n} \tan^2(\alpha_{p}+\beta_{p}),
\end{equation}
which is (\ref{2.19s}).

Using (\ref{2.18s}), (\ref{2.19s}) and the second derivative of $g''_{p}(x)$ at $c_p$ (\ref{2.9}) can be rewritten as
\begin{eqnarray*}
&&\frac{1}{\sigma_{p}^3}=\frac{1}{c_{p}^3}+\frac{p}{2n} g''_{p}(c_{p}) \\
&&=\frac{1}{c_{p}^3}+ \frac{p}{2n}(\frac{1-\frac{p}{m}}{\frac{p}{m}c_{p}^3}-\frac{\sqrt{(c_{p}-1-\frac{p}{m})^2-4\frac{p}{m}}}{\frac{p}{m}c_{p}^3}-\frac{1+\frac{p}{m}}{\frac{p}{m}c_{p}^2\sqrt{(c_{p}-1-\frac{p}{m})^2-4\frac{p}{m}}}+\\
&&\frac{1}{2\frac{p}{m}c_{p}\sqrt{(c_{p}-1-\frac{p}{m})^2-4\frac{p}{m}}}+\frac{(c_{p}-\frac{p}{m}-1)^2}{2\frac{p}{m}c_{p}((c_{p}-1-\frac{p}{m})^2-4\frac{p}{m})^{3/2}})\\
&&=\cos^2(\beta_{p})\cot^3(\beta_{p})\csc(\alpha_{p})\sec(\alpha_{p})\sec^4(\beta_{p}+\alpha_{p})\tan^4(\beta_{p}+\alpha_{p})\\
&&=16\cos^4(\beta_{p})\cot^2(\beta_{p})\csc(2\beta_{p})\csc(2\alpha_{p})\csc^2(2\beta_{p}+2\alpha_{p})\tan^6(\beta_{p}+\alpha_{p})\\
&&=16\frac{m^2}{(m+n)^2} \frac{m}{n} \frac{1}{\sin(2\beta_{p})\sin(2\alpha_{p})\sin^2(2\beta_{p}+2\alpha_{p})}\tan^6(\beta_{p}+\alpha_{p})\\
&&=\mu_{p}^3\frac{16n^2}{(m+n)^2}\frac{1}{\sin(2\beta_{p})\sin(2\alpha_{p})\sin^2(2\beta_{p}+2\alpha_{p})},
\end{eqnarray*}
which is (\ref{5.3}).


When $n=p$, solving (\ref{4.3}) yields
\begin{eqnarray}\label{4.6}
&&c_{p}= \frac{(1-\frac{p}{m})^2}{2(1+\frac{p}{m})}=\frac{(m-p)^2}{2(m+p)m}.
\end{eqnarray}
From (\ref{2.18s}) one may conclude that
$
\alpha_{p}=\beta_p.
$
Since
\begin{eqnarray*}
&&\mu_{p}=\frac{1}{c_{p}}+\frac{p}{n} g_{p}(c_{p})=\frac{1}{c_{p}}\frac{2(m+n-p)}{m+2n-p} =\frac{4m^2}{(m-p)^2}
\end{eqnarray*}
and
\begin{eqnarray*}
\frac{2\sqrt{mp}}{(m-p)}=\frac{2\frac{\sqrt{mp}}{m+p}}{\frac{m-p}{m+p}}= \frac{\sin(2\alpha_{p})}{\cos(2\alpha_{p})}=\tan(\alpha_{p}+\beta_{p})
\end{eqnarray*}
we have
\begin{eqnarray}\label{4.7}
&&\mu_{p}=\frac{m}{n} \tan^2(\alpha_{p}+\beta_{p}).
\end{eqnarray}
It's similar to prove
\begin{equation}\label{4.8}
\frac{1}{\sigma_{p}^3}=\mu_{p}^3\frac{16n^2}{(m+n)^2}\frac{1}{\sin(2\beta_{p})\sin(2\alpha_{p})\sin^2(2\beta_{p}+2\alpha_{p})}.
\end{equation}
The above implies the equivalence between (\ref{2.8})-(\ref{2.9}) and (\ref{2.19s})-(\ref{2.20s}).

It is straightforward to verify that $|\frac{m}{n}\mu_{J,p}-\mu_{p}|=O(p^{-1})$ and $\lim_{p \rightarrow \infty }\sigma_{p} \frac{m}{n^{1/3}}\sigma_{J,p}=1$ according to (\ref{2.18s})-(\ref{2.20s}) and (\ref{5.2}).
\end{proof}


\section{Proof of Part (ii) of Theorem \ref{t1}: Standard Gaussian Distribution}

This section is to consider the case when $\{X_{ij}\}$ follow normal distribution with mean zero and variance one. We below first introduce more notation. Let $\bbA=(A_{ij})$ be a matrix. We define the following norms
 $$\|\bbA\|= \max_{|\bbx|=1}|\bbA\bbx|,\ \ \|\bbA\|_{\infty}=\max_{i,j}|A_{ij}|, \ \ \|\bbA\|_F=\sqrt{\sum_{ij}|A_{ij}|^2},$$
 where $|\bbx|$ represents the Euclidean norm of a vector $\bbx$. Notice that we have a simple relationship among these norms 
 $$\|\bbA\|_{\infty}\le \|\bbA\|\le \|\bbA\|_F.$$
We also need the following commonly used definition about stochastic domination to simplify the statements.
\begin{deff} (Stochastic domination) Let
$$\xi=\{\xi^{(n)}(u):n\in \mathbb{N}, u\in U^{(n)}\}, \ \ \zeta=\{\zeta^{(n)}(u):n\in \mathbb{N}, u\in U^{(n)}\}$$
be two families of random variables, where $U^{(n)}$ is a n-dependent parameter set (or independent of n). If for sufficiently small positive $\ep$ and sufficiently large $\sigma$,
$$\sup_{u\in U^{(n)}}\mathbb{P}\left[|\xi^{(n)}(u)|>n^{\ep}|\zeta^{(n)}(u)|\right]\le n^{-\sigma}$$
for large enough $n\geq n(\ep,\sigma)$, then we say that $\zeta$ stochastically dominates $\xi$ uniformly in u. We denote this relationship by $|\xi| \prec \zeta$ and also write it as $\xi =O_{\prec}(\zeta)$. Furthermore
we also write it as $|x| \prec y$ if $x$ and $y$ are both nonrandom and $|x|\leq n^{\ep}|y|$ for sufficiently small positive $\ep$.
\end{deff}

\begin{proof}
We start the proof by reminding readers that $m<p$ and $m+n>p$. Since $m<p$ the limit of the empirical distribution function of $\frac{1}{p}\bbY^*\bbY$ is the MP law and we denote its density by $\rho_{pm}(x)$.  We define $\gamma_{m,1} \geq \gamma_{m,2} \geq \cdots \geq \gamma_{m,m}$ to satisfy
\begin{equation}\label{6.1}
\int_{\gamma_{m,j}}^{+\infty}\rho_{pm}dx=\frac{j}{m},
\end{equation}
with $\gamma_{m,0} =(1+\sqrt{\frac{m}{p}})^2$,$\gamma_{m,m} =(1-\sqrt{\frac{m}{p}})^2$.
Correspondingly denote the eigenvalues of $\frac{1}{p}\bbY^*\bbY$ by $\hat{\gamma}_{m,1} \geq \hat{\gamma}_{m,2} \geq \cdots \geq \hat{\gamma}_{m,m}$.
Here we would remind the readers that $\rho_{pm}(x),\gamma_{m,j}, \hat{\gamma}_{m,1}$ are similar to those in (\ref{2.5}), below (\ref{2.5}) and above (\ref{2.13}) except that we are interchanging the role of p and m because we are considering $\frac{1}{p}\bbY^*\bbY$ rather than $\frac{1}{m}\bbY\bbY^*$.
Moreover as in (\ref{3.31}) and (\ref{3.30}) for any sufficiently small $\zeta>0$ and big $D>0$ there exists an event $S_{\zeta}$ (here with a bit abuse of notion $S_{\zeta}$) such that
\begin{equation}\label{6.2}
S_{\zeta}=\{\forall j,1 \leq j \leq m, |\hat{\gamma}_{m,j}-\gamma_{m,j}| \leq  p^{\zeta-2/3} \tilde{j}^{-1/3} \}
\end{equation}
and
\begin{equation}\label{6.3}
P(S_{\zeta}^c)\leq p^{-D} .
\end{equation}

Note that $\frac{1}{p}\bbY\bbY^*$ and $\frac{1}{p}\bbY^*\bbY$ have the same nonzero eigenvalues. To simplify notation let $m_p=m+n-p$. Write
\begin{eqnarray}\label{6.4}
\frac{1}{p}\bbY\bbY^*= \bbU^* \left( \begin{array}{cc}
\bbD & 0 \\
0 & 0
\end{array}
\right) \bbU,
\end{eqnarray}
with
$
\bbD=diag \{\hat{\gamma}_{m,1}, \hat{\gamma}_{m,2}, \cdots , \hat{\gamma}_{m,m}\}
$
and $\bbU$ is an orthogonal matrix. Then $\det(\lambda \frac{\bbY\bbY^*}{p}-\frac{\bbX\bbX^*}{m_p})=0$ is equivalent to
\begin{eqnarray*}
\det \left(\lambda \left( \begin{array}{cc}
\bbD & 0 \\
0 & 0
\end{array}
\right)-\frac{1}{m_p}U^*\bbX\bbX^*U \right)=0.
\end{eqnarray*}
Moreover, since $\{\bbX_{ij}\}$ are independent standard normal random variables and $\bbU$ is an orthogonal matrix we have $U\bbX\overset{d}{=}\bbX$ so that it suffices to consider the following determinant \begin{eqnarray}\label{6.5}
\det\left(\lambda  \left( \begin{array}{cc}
\bbD & 0 \\
0 & 0
\end{array}
\right)-\frac{1}{m_p}\bbX\bbX^*\right)=0.
\end{eqnarray}
Here $\overset{d}{=}$ means having the identical distribution.

Now rewrite $\bbX$ as
%
$
\bbX=\left( \begin{array}{cc}
\bbX_1 \\
\bbX_2
\end{array}
\right),
$
where $\bbX_1$ is a $m \times n$ matrix and $\bbX_2$ is a $(p-m) \times n$ matrix. It follows that
\begin{eqnarray}\label{6.7}
\bbX\bbX^*=\left( \begin{array}{cc}
\bbX_1 \bbX_1^* & \bbX_1 \bbX_2^*\\
\bbX_2 \bbX_1^* & \bbX_2 \bbX_2^*
\end{array}\right)
\overset{\Delta}{=}\left( \begin{array}{cc}
\bbX_{11}  & \bbX_{12}\\
\bbX_{21}  & \bbX_{22}
\end{array}
\right).
\end{eqnarray}
(\ref{6.5}) can be rewritten as
\begin{eqnarray*}
\det \left( \begin{array}{cc}
\frac{1}{m_p}\bbX_{11}-\lambda \bbD & \frac{1}{m_p}\bbX_{12} \\
\frac{1}{m_p}\bbX_{21} & \frac{1}{m_p}\bbX_{22}
\end{array}
\right)=0.
\end{eqnarray*}
Since $m+n>p$, $\bbX_{22}$ is invertible. (\ref{6.5}) is further equivalent to
\begin{eqnarray}\label{0318.1}
\det (\frac{1}{m_p}\bbX_{11}-\lambda \bbD-\frac{1}{m_p}\bbX_{12} \bbX^{-1}_{22} \bbX_{21}) =0.
\end{eqnarray}
Moreover,
\begin{eqnarray*}
\bbX_{11}-\bbX_{12} \bbX_{22}^{-1} \bbX_{21} =\bbX_1\bbX_1^*-\bbX_1 \bbX_2^*(\bbX_2 \bbX_2^*)^{-1}\bbX_2 \bbX_1^*=\bbX_1 (I_n-\bbX_2^*(\bbX_2 \bbX_2^*)^{-1}\bbX_2) \bbX_1^*.
\end{eqnarray*}
Since $rank(I_n-\bbX_2^*(\bbX_2 \bbX_2^*)^{-1}\bbX_2)=m+n-p=m_p$ we can write
\begin{eqnarray*}
\bbI_n-\bbX_2^*(\bbX_2 \bbX_2^*)^{-1}\bbX_2= \bbV  \left( \begin{array}{cc}
\bbI_{m_p} & 0 \\
0 & 0
\end{array} \right) \bbV^*.
\end{eqnarray*}
where $\bbV$ is an orthogonal matrix. 
In view of the above we can construct a $m \times m_p$ matrix $\bbZ=(Z_{ij})_{m,m_p}$ consisting of independent standard normal random variables so that
\begin{eqnarray}\label{1129.3}
\bbX_{11}-\bbX_{12} \bbX_{22}^{-1} \bbX_{21} \overset{d}{=}\bbZ\bbZ^*.
\end{eqnarray}
It follows that (\ref{0318.1}) and hence (\ref{6.5}) are equivalent to
\begin{equation}\label{b1}
\det (\frac{1}{m_p}\bbZ\bbZ^*-\lambda \bbD) =0.
\end{equation}

It then suffices to consider the largest eigenvalue of $\frac{1}{m_p}\bbD^{-1}\bbZ\bbZ^*$. Denote by $\lambda_{1}$ the largest eigenvalue of $\frac{1}{m_p}D^{-1}ZZ^*$.
%
%
As in (\ref{2.13}) and (\ref{2.14}) define $\hat{c}_{m} \in [0,\hat{\gamma}_{m,m})$ to satisfy
\begin{equation}\label{a22}
\frac{1}{m} \sum_{j=1}^m (\frac{\hat{c}_{m}}{\hat{\gamma}_{m,j}-\hat{c}_{m}})^2=\frac{m_p}{m}
\end{equation}
and  $\hat{\mu}_{p}$ and $\hat{\sigma}_{p}$ by
\begin{equation*}
\hat{\mu}_{m}=\frac{1}{\hat{c}_{m}}(1+\frac{1}{m_p}\sum_{j=1}^m (\frac{\hat{c}_{m}}{\hat{\gamma}_{m,j}-\hat{c}_{m}})),\quad
\frac{1}{\hat{\sigma}_{m}^3}=\frac{1}{\hat{c}_{m}^3}(1+\frac{1}{m_p}\sum_{j=1}^m (\frac{\hat{c}_{m}}{\hat{\gamma}_{m,j}-\hat{c}_{m}})^3).
\end{equation*}
From Lemma \ref{0313-2} we have on the event $S_{\zeta}$
\begin{equation}\label{a23}
\limsup_{p}  \frac{\hat{c}_{m}}{\hat{\gamma}_{m,m}}<1,
\end{equation}
which implies condition (\ref{3.45}). 
It follows from Theorem 1.3 of \cite{BPZ2014a} and Theorem 2.18 of \cite{KY14} that
\begin{equation}\label{0529.1}
\lim_{p \rightarrow \infty} P(\hat{\sigma}_{m} (m+n-p)^{2/3} (\lambda_{1}-\hat{\mu}_{m}) \leq s)=F_1(s).
\end{equation}
As in the proof of Theorem \ref{t1}, by Lemmas \ref{0312-1} and \ref{0313-2}  one may further conclude that
\begin{equation}\label{0529.2}
\lim_{p \rightarrow \infty} P(\sigma_{p} (m+n-p)^{2/3} (\lambda_{1}-\mu_{p}) \leq s)=F_1(s).
\end{equation}

%
%
%
%
%
%
%

\end{proof}

\section{Proof of Part (ii) of Theorem \ref{t1}: General distributions}

The aim of this section is to relax the gaussian assumption on $\bbX$.  We below assume that $\bbX$ and $\bbY$ are real matrices. The complex case can be handled similarly and hence we omit it here. 
In the sequel, we absorb $\frac{1}{\sqrt{m+n-p}}$ and $\frac{1}{\sqrt{p}}$ into $\bbX$ and $\bbY$ respectively( i.e. $Var(X_{ij})=\frac{1}{m+n-p}$, $Var(Y_{st})=\frac{1}{p}$) for convenience. 

In terms of the notation in this section ($Var(\bbY_{st})=\frac{1}{p}$), (\ref{6.4}) can be rewritten as
\begin{eqnarray*}
\bbY\bbY^*= \bbU^* \left( \begin{array}{cc}
\bbD & 0 \\
0 & 0
\end{array}
\right) \bbU.
\end{eqnarray*}
Break  $\bbU$ as $\left( \begin{array}{c}
\bbU_1 \\
\bbU_2
\end{array}\right)$ where $\bbU_1$ and $\bbU_2$ are $m\times p$ and $(p-m)\times p$ respectively. By (\ref{6.4})-(\ref{0318.1}) (note that here we can not omit $\bbU$ by $\bbU\bbX\overset{d}{=}\bbX$), the maximum eigenvalue of
$\det(\lambda \bbY\bbY^*-\bbX\bbX^*)$ is equivalent to that of the following matrix
\begin{eqnarray}\label{0313.1}
&&\bbA=\bbD^{-\frac{1}{2}}\bbU_1\bbX(I-\bbX^T\bbU^T_2(\bbU_2\bbX\bbX^TU^T_2)^{-1}U_2\bbX)\bbX^T\bbU^T_1\bbD^{-\frac{1}{2}}\non
&\overset{\Delta}{=}&\bbD^{-\frac{1}{2}}\bbU_1\bbX(I-P_{\bbX^{T}U_2^T})\bbX^T\bbU^T_1\bbD^{-\frac{1}{2}},
\end{eqnarray}
where $\bbP_{\bbX^{T}\bbU_2^T}$ is the projection matrix. It is not necessary to assume that $\bbU_2\bbX\bbX^T\bbU^T_2$ is invertible since $P_{\bbX^{T}U_2^T}$ is unique even if $(\bbU_2\bbX\bbX^T\bbU^T_2)^{-}$ is the generalized inverse matrix of $\bbU_2\bbX\bbX^T\bbU^T_2$. 
Moreover we indeed have the following lemma to control the smallest eigenvalue of $\bbU_2\bbX\bbX^T\bbU^T_2$. 
%
 \begin{lem}\label{1121-1}
Suppose that $(m+n-p)^{\frac{1}{2}}\bbX$ satisfies Condition \ref{cond1}. Then $\bbU_2\bbX\bbX^T\bbU^T_2$ is invertible and
\begin{equation}\label{a39}\|(\bbU_2\bbX\bbX^T\bbU^T_2)^{-1}\|\le M
\end{equation}for a large constant M with high probability. Moreover,
\begin{equation}\label{b5}\|\bbX\bbX^*\|\leq M
\end{equation} with high probability under conditions in Theorem \ref{t1}.
\end{lem}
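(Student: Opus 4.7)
The plan is to establish the two bounds separately, reducing the first to a classical Bai--Yin result and the second to a conditional smallest-eigenvalue bound for a sample-covariance-type matrix obtained by conditioning on $\bbY$.

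First I would handle the bound $\|\bbX\bbX^*\|\le M$. Recall that in this section $\bbX$ has been rescaled so that $\mathrm{Var}(X_{ij})=1/(m+n-p)$, with all higher moments uniformly bounded by Condition \ref{cond1}. Thus $\bbX$ is a $p\times n$ matrix of centered independent entries with bounded moments of every order. The standard Bai--Yin theorem yields
$$\|\bbX\|^2 = \frac{1}{m+n-p}\bigl(\sqrt{n}+\sqrt{p}\bigr)^2(1+o(1))$$
with high probability. By Condition \ref{cond3}, both $n/(m+n-p)$ and $p/(m+n-p)$ are bounded (the condition $\lim p/(m+n)<1$ forces $m+n-p$ to be of the same order as $p$, $m$, $n$), so $\|\bbX\bbX^*\|\le M$ as claimed.

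The heart of the lemma is the lower bound on the smallest eigenvalue of $\bbU_2\bbX\bbX^T\bbU_2^T$. The key structural observation is that $\bbU_2$ is a deterministic function of $\bbY$ and is therefore independent of $\bbX$; moreover, since $\bbU$ is orthogonal, $\bbU_2\bbU_2^T=\bbI_{p-m}$. Condition on $\bbY$ and set $\bbW=\bbU_2\bbX$, a $(p-m)\times n$ matrix. Its columns $\{\bbU_2 X_{\cdot j}\}_{j=1}^n$ are iid vectors in $\mathbb{R}^{p-m}$, each with mean zero and covariance
$$\bbU_2\cdot\tfrac{1}{m+n-p}\bbI_p\cdot\bbU_2^T=\tfrac{1}{m+n-p}\bbI_{p-m}.$$
The scaled columns $\sqrt{m+n-p}\,\bbU_2 X_{\cdot j}$ are isotropic; since their entries are linear combinations of the $X_{kj}$ with coefficients of modulus $\le 1$ (rows of $\bbU_2$ have unit Euclidean norm), Rosenthal's inequality together with Condition \ref{cond1} yields uniformly bounded moments of every order. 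Next I would check the aspect ratio: by Condition \ref{cond3}, $p/(m+n)$ has limit $d_1d_2/(d_1+d_2)<1$, which rearranges to $(p-m)/n\to d_2(d_1-1)/d_1<1$, i.e.\ the limiting aspect ratio is strictly below $1$.

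With these inputs I would invoke a Bai--Yin--type bound for sample covariance matrices with iid isotropic columns and bounded moments (e.g.\ Theorem~2 of Bai--Yin or its refinements in Bai--Silverstein \cite{BS06}), which yields
$$\lambda_{\min}(\bbW\bbW^T)\ge \tfrac{n}{m+n-p}\bigl(1-\sqrt{(p-m)/n}\bigr)^2+o(1)\ \ \text{w.h.p.}$$
Since the right-hand side is bounded below by a positive constant by Condition \ref{cond3}, this proves $\bbU_2\bbX\bbX^T\bbU_2^T$ is invertible and $\|(\bbU_2\bbX\bbX^T\bbU_2^T)^{-1}\|\le M$ with high probability conditionally on $\bbY$, hence also unconditionally. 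The main technical obstacle is that the entries of $\bbW$ are correlated (only the columns are independent), so we cannot cite the textbook Bai--Yin result for iid-entry matrices directly; one needs the version for iid isotropic columns under finite-moment assumptions, which is by now standard but requires the moment and aspect-ratio verifications above.
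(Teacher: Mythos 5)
Your reduction is structurally the same as the paper's: both arguments rest on the observations that $\bbU_2$ is determined by $\bbY$ and hence independent of $\bbX$, that $\bbU_2\bbU_2^T=\bbI_{p-m}$ makes the columns of $\bbU_2\bbX$ independent isotropic vectors with covariance $\frac{1}{m+n-p}\bbI_{p-m}$, and that Condition \ref{cond3} forces the aspect ratio $(p-m)/n\to d_2(d_1-1)/d_1<1$, so the limiting smallest eigenvalue is bounded away from zero. Your verification of these three facts is exactly the content of "one may check that the conditions are satisfied" in the paper, and your treatment of $\|\bbX\bbX^*\|\le M$ via Bai--Yin is unobjectionable (the paper instead cites Lemma 3.9 of \cite{KY14}).

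The gap is in the tool you invoke for the smallest eigenvalue. The Bai--Yin theorem and its versions in Bai--Silverstein \cite{BS06} require a data matrix with i.i.d.\ \emph{entries} (finite fourth moment); they do not cover matrices whose columns are merely independent and isotropic but whose coordinates within a column are dependent, which is precisely the situation for $\bbU_2\bbX$ (each column is a fixed linear image $\bbU_2 X_{\cdot j}$ of an independent-entry vector). There is no "standard" isotropic-columns Bai--Yin under bounded moments alone that you can cite from \cite{BS06}; the known routes are either structural assumptions on the columns (e.g.\ log-concavity) or the local-law machinery. The correct reference here --- and the one the paper uses --- is Theorem 3.12 of Knowles--Yin \cite{KY14}, whose anisotropic local law is formulated exactly for matrices of the form $T\bbX\bbX^*T^*$ with $\bbX$ having independent entries with bounded moments and $T$ deterministic; it localizes $\lambda_{\min}(\bbU_2\bbX\bbX^T\bbU_2^T)$ at the Gaussian value to precision $O_\prec(n^{-2/3})$, which gives (\ref{a39}). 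Your moment and aspect-ratio checks are what make that theorem applicable, so the fix is only to replace the citation, not the argument. (Two minor points: the columns of $\bbU_2\bbX$ are independent but not necessarily identically distributed, since Condition \ref{cond1} does not impose identical distribution; and the exact limiting constant is immaterial --- only its strict positivity, which your aspect-ratio computation guarantees, is used.)
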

\begin{proof}
One may check that the conditions in Theorem 3.12 in \cite{KY14} are satisfied when considering $\bbU_2\bbX\bbX^T\bbU^T_2$. Applying Theorem 3.12 in \cite{KY14} then yields
$$
|\lambda_{\min}\Big(\bbU_2\bbX\bbX^T\bbU^T_2\Big)-(1-\sqrt{\frac{n}{p-m}})^2|\prec n^{-2/3},
$$
where $(1-\sqrt{\frac{n}{p-m}})^2$ can be obtained when considering the special case when the entries of $\bbX$ are Gaussian.
As for (\ref{b5}) see Lemma 3.9 in [7].
\end{proof}

Since the matrix in (\ref{0313.1}) is quite complicated we construct a linearization matrix for it
\begin{eqnarray}\label{1205.1}
\bbH=\bbH(\bbX)= \left(
  \begin{array}{ccc}
    -z\bbI & 0 & \bbD^{-1/2}\bbU_1\bbX\\
    0 & 0 & \bbU_2\bbX\\
  \bbX^T\bbU^T_1\bbD^{-1/2} & \bbX^T \bbU^T_2& -\bbI\\
  \end{array}
\right).
\end{eqnarray}
The connection between $\bbH$ and the matrix in (\ref{0313.1}) is that the upper left block of the $3\times 3$ block matrix $\bbH^{-1}$ is the Stieltjes transform of (\ref{0313.1}) by simple calculations,. We next
give the limit of the Stieltjes transform of (\ref{0313.1}) and need the following well-known result (see \cite{BS06}).
There exists a unique solution $m(z):\mathcal{C}^+\rightarrow \mathcal{C}$ such that
\begin{equation}\label{b8}\frac{1}{m(z)}=-z+\frac{m}{m+n-p}\int \frac{t}{1+tm(z)}dH_n(t),\end{equation}
where $H_n$ is the empirical distribution function of $\bbD^{-1}$. Moreover, we set
$$\underline{m}(z)=-Tr(z(1+m(z)\bbD^{-1}))^{-1},\quad \rho(x)=\lim_{z\in \mathcal{C}^+\rightarrow x}\Im m(z).$$
From the end of the last section we see that under the gaussian case
$(\ref{0313.1}) \xlongequal{d} \bbD^{-1/2}\bbZ\bbZ^*\bbD^{-1/2}.$  Hence it is easy to see that $\hat \mu_m$ defined above (\ref{a23}) is the right most end point of  the support of $\rho(x)$.  

For any small positive constant $\tau$ we define the domains
\begin{eqnarray}\label{1121.1}
E(\tau, n)= \{z=E+i\eta \in \mathbb{C}^+: |z|\ge \tau, |E|\le \tau^{-1}, n^{-1+\tau}\le \eta\le \tau^{-1} \},
\end{eqnarray}
\begin{eqnarray}\label{1121.2}
E_+=E_+(\tau,\tau', n)= \{z\in E(\tau, n): E \ge \hat \mu_m-\tau'\},
\end{eqnarray}
where  $\tau'$ is a sufficiently small positive constant.

Set
\begin{equation}\label{a17}\Psi=\Psi(z)= \sqrt{\frac{\Im m(z)}{n\eta}}+\frac{1}{n\eta},\ \bbG(z)= \bbH^{-1},\ \bold\Sigma=\Sigma(z)= z^{-1}(1+m(z)\bbD^{-1})^{-1}.
\end{equation}
To calculate an explicit expression of $\bbG(z)$ we need the following well-known formula:
\begin{eqnarray}\label{0202.1}
\left(
  \begin{array}{cc}
    \bbK & \bbB\\
    \bbC & \bbD\\
  \end{array}
\right)^{-1}
=\left(\begin{array}{cc}
    0 & 0\\
    0 & \bbD^{-1}\\
  \end{array}\right)
  +\left(\begin{array}{cc}
    \bbI\\
    -\bbD^{-1}\bbC  \\
  \end{array}
  \right)
  (\bbK-\bbB\bbD^{-1}\bbC)^{-1}
 \left( \begin{array}{cc}
    \bbI & -\bbB\bbD^{-1}\\
  \end{array}\right).
\end{eqnarray}

We next develop the explicit expression of $\bbG(z)$.  Denote the spectral decomposition of $\bbA_1= \bbD^{-1/2}\bbU_1\bbX(I-\bbP_{\bbX^T\bbU^T_2})\bbX^T\bbU^T_1\bbD^{-1/2}$ by
$$\bbA_1=\bbV\bold\Lambda \bbV^T=\sum_{k=1}^{m}\lambda_k \bbv_k\bbv^T_k,$$
where
$$\lambda_1\ge ... \ge\lambda_{m+n-p}> 0=\lambda_{m+n-p+1}=...=\lambda_m.$$
It follows that
\begin{equation}\label{1119.14}G_{ij}=\sum_{k=1}^{m}\frac{\bbv_k(i)\bbv_k(j)}{\lambda_k-z}, \ \ 1\le i,j\le m,\end{equation}
where $G_{ij}$ denotes the $(i,j)$th entry of the matrix $\bbG(z)$ and $\bbv_k(i)$ means the $i$th component of the vector $\bbv_k$.
We denote $(G_{ij})_{1\le i,j \le m}$ by $\bbG_m$, which is the green function of (\ref{0313.1}). 
Moreover, let $$\bbA_2= 
\left(
  \begin{array}{ccc}
    \bbI & -\bbD^{-1/2}\bbU_1\bbX\bbX^T\bbU^T_2\Gamma & \bbD^{-1/2}\bbU_1\bbX(\bbI-\bbP_{\bbX^T\bbU^T_2})\\
  \end{array}
\right)^T,$$ and  $$\bbA_3=
\left(
  \begin{array}{ccc}
    0 & 0 &0\\
    0 & \bold\Gamma & \Gamma \bbU_2\bbX\\
    0 & \bbX^T\bbU^T_2\Gamma & -I+P_{\bbX^T\bbU^T_2}
  \end{array}
\right),$$
where $\bold\Gamma=(\bbU_2\bbX\bbX^T\bbU^T_2)^{-1} $. Applying (\ref{0202.1}) twice implies that
\begin{eqnarray}\label{1219.1}
\bbG(z)=\bbA_3+\sum_{k=1}^{m}\frac{\bbA_2\bbv_k\bbv^T_k\bbA^T_2}{\lambda_k-z}=\bbA_3+\bbA_2\bbG_m\bbA^T_2.\
\end{eqnarray}

To control the inverse of a matrix in the projection matrix we introduce the following smooth cutoff function
$$\mathcal{X}(x)=\begin{cases}
   1 &\mbox{if $|x|\le M_1 n^{-2}$}\\
   0 &\mbox{if $|x|\ge 2M_1 n^{-2}$},
\end{cases}$$
whose derivatives satisfy $|\mathcal{X}^{(k)}|\le M n^{2k}$, k=1,2,... and $M_1$ is some positive constant. Let $\tilde \lambda_1 \ge...\ge \tilde \lambda_{p-m}$ be the eigenvalues of $\bbU_2\bbX\bbX^T\bbU^T_2$  and $\underline{s}(z)$ be the Stieltjes transform of its ESD. Since
\begin{eqnarray}\label{1209.1}
\Im (\underline{s}(i n^{-2}))=(p-m)^{-1}\sum_{i=1}^{p-m}\frac{n^{-2}}{\tilde \lambda_i^2+n^{-4}},
\end{eqnarray}
we conclude that
\begin{equation}\label{a27}
\text{if}\ |\Im (\underline{s}(i n^{-2}))|\le M_1 n^{-2},\  \text{then}\ \tilde \lambda_{p-m} \ge \frac{M_2}{n}
 \end{equation}
 for some positive constant $M_2$, which allows us to control the maximum eigenvalue of $(\bbU_2\bbX\bbX^T\bbU^T_2)^{-1}$ outside the event $\{\tilde \lambda_{p-m}\ge c\}$. Moreover, consider the event $\{\tilde \lambda_{p-m}\ge c\}$. By Lemma \ref{1121-1}, choosing a sufficient small constant c, we have
\begin{eqnarray}\label{0318.3}
1-o(n^{-l})=\mathbb{P}(\tilde \lambda_{p-m}\ge c)\le \mathbb{P}(\Im (\underline{s}(i n^{-2}))\le M_1n^{-2}), \ \ \text{for any positive integer } l.
\end{eqnarray}
Therefore, by Lemma \ref{1121-1} we have
\begin{equation}\label{b2}
\mathbb{P}(\mathcal{X}(\Im (\underline{s}(i n^{-2})))\neq 1)\le o(n^{-l}), \ \ \text{for any positive integer } l.
\end{equation}
Similarly, by Lemma \ref{1121-1}, for $\|\bbX\|^2_F$, we have
\begin{equation}\label{b3}\mathbb{P}(\mathcal{X}(n^{-3} \|\bbX\|^2_F)\neq 1)\le o(n^{-l}), \ \ \text{for any positive integer } l.
\end{equation}
Set $\mathcal{T}_n(X)= \mathcal{X}(\Im ( \underline{s}(i n^{-2}))\mathcal{X}(n^{-3} \|\bbX\|^2_F)$, and
\begin{eqnarray}\label{1205.2}
&&\bbF(z)=\non
&&\left(
  \begin{array}{ccc}
    -\bold\Sigma & \bold\Sigma \bbD^{-1/2}\bbU_1\bbX\bbX^T\bbU^T_2\bold\Gamma & 0\\
   \bold\Gamma \bbU_2\bbX\bbX^T\bbU^T_1 \bbD^{-1/2} \Sigma  & \bold\Gamma-\bold\Gamma \bbU_2\bbX\bbX^T\bbU^T_1 \bbD^{-1/2} \bold\Sigma \bbD^{-1/2}\bbU_1\bbX\bbX^T\bbU^T_2\bold\Gamma & \bold\Gamma \bbU_2\bbX \\
  0 & \bbX^T\bbU^T_2\Gamma & (zm(z)+1)(\bbI-\bbP_{\bbX^T\bbU^T_2})\\
  \end{array}
\right). \non
\end{eqnarray}
In fact, $\bbF(z)$ is close to $\bbG(z)$ with high probability. In view of (\ref{b2}) and (\ref{b3}) it is straight forward to see that
\begin{eqnarray}\label{0304.7}
\mathcal{T}_n(X)=1
\end{eqnarray} with high probability and we will use it frequently without mention.

We are now in a position to state our main result about the local law near $\hat \mu_m$, the right end point of the support of the limit of the ESD of $\bbA$ in (\ref{0313.1}).
 \begin{thm} (Strong local law)\label{1119-3}
Suppose that $(m+n-p)^{\frac{1}{2}}\bbX$ and $p^{\frac{1}{2}}\bbY$ satisfy the conditions of Theorem \ref{t1}. Then
\begin{itemize}
\item[(i)]  For any deterministic unit vectors $\bbv$, $\bbw\in \mathbb{R}^{p+n}$ 
\begin{eqnarray}\label{0310.1}\langle\bbv,(\bbG(z)-\bbF(z))\bbw\rangle\prec \Psi\end{eqnarray}
uniformly $z\in E_+$ and  
\item[(ii)]
\begin{eqnarray}\label{0310.2}|\underline{m}_n(z)-\underline{m}(z)|\prec \frac{1}{n\eta}\end{eqnarray}
uniformly in $z\in E_+$, where $\underline{m}_n(z)=\frac{1}{m}\sum_{i=1}^mG_{ii}$. 
\end{itemize}
\end{thm}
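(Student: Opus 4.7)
The plan is to reduce the statement to the Gaussian case via a Lindeberg-type Green function comparison, and to handle the Gaussian case by recognizing that the relevant matrix is distributionally equivalent to a standard deformed sample covariance matrix with population covariance $\bbD^{-1}$.

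First I would treat the Gaussian case $\bbX = \bbX^0$. As derived in Section 6 (see (\ref{1129.3})), when $\bbX^0$ has i.i.d.\ standard Gaussian entries and we condition on $\bbY$ (hence on $\bbU$ and $\bbD$), the matrix $\bbA$ in (\ref{0313.1}) has the same distribution as $\bbD^{-1/2} \bbZ \bbZ^T \bbD^{-1/2}$, where $\bbZ$ is $m \times (m+n-p)$ with i.i.d.\ standard Gaussian entries. This is a standard deformed sample covariance matrix whose population covariance is $\bbD^{-1}$, and the self-consistent equation (\ref{b8}) is precisely the corresponding Marchenko--Pastur fixed-point equation. The anisotropic local law of Knowles--Yin \cite{KY14} then yields the isotropic bound $\langle \bbv, (\bbG_m(z) - \Sigma(z))\bbw \rangle \prec \Psi$ uniformly on $E_+$, as well as the averaged rate $|\underline{m}_n(z) - \underline{m}(z)| \prec 1/(n\eta)$. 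Lifting these estimates from $\bbG_m$ to the full block resolvent $\bbG = \bbH^{-1}$ is then a direct computation using (\ref{1219.1}): one plugs $\bbG_m \approx \Sigma(z)$ into $\bbA_3 + \bbA_2 \bbG_m \bbA_2^T$ and checks that each $(3 \times 3)$ block matches the corresponding block of $\bbF(z)$, where the remaining random factors such as $\bbU_1 \bbX \bbX^T \bbU_2^T$ and $(\bbU_2 \bbX \bbX^T \bbU_2^T)^{-1}$ are controlled by Lemma \ref{1121-1} and the cutoff $\mathcal{T}_n(\bbX)$.

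Second, I would pass from the Gaussian case to general $\bbX$ matching $\bbX^0$ to order three by a resolvent comparison in the style of \cite{KY14}. Let $\bbX^\gamma$ denote the hybrid obtained from $\bbX^0$ by replacing entries one at a time along a path $\gamma = 0, 1, \ldots, mn$ by the corresponding entries of $\bbX$, and let $\bbG^\gamma$ be the resolvent of $\bbH(\bbX^\gamma)$. Writing $\bbG^\gamma - \bbG^{\gamma-1}$ via the resolvent identity $\bbG^\gamma - \bbG^{\gamma-1} = -\bbG^\gamma (\bbH(\bbX^\gamma) - \bbH(\bbX^{\gamma-1})) \bbG^{\gamma-1}$ and Taylor-expanding in the single entry being swapped up to order four, the first three moment contributions cancel by Definition \ref{1222-1}, leaving a fourth-order remainder of size $O(n^{-2})$ per swap. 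A priori bounds on the entries of $\bbG$, coming from the spectral representation (\ref{1119.14}), the cutoff $\mathcal{T}_n(\bbX)$ (whose complement has negligible probability by (\ref{b2})--(\ref{b3})), and the smallest-singular-value estimate (\ref{a39}) of Lemma \ref{1121-1}, give polynomial control on each factor appearing in the Taylor expansion. Summing over the $mn$ swaps and running a bootstrap argument on $E_+$ (initialized at large $\eta$ where the bounds are trivial and iterated down to $\eta \sim n^{-1+\tau}$) converts polynomial moment control into the stochastic domination statements $O_\prec(\Psi)$ and $O_\prec(1/(n\eta))$.

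The main obstacle is the \emph{rational} dependence of $\bbA$ on the entries of $\bbX$ caused by the projection $\bbI - \bbP_{\bbX^T \bbU_2^T}$ containing the inverse $(\bbU_2 \bbX \bbX^T \bbU_2^T)^{-1}$. The linearization (\ref{1205.1}) trades this rational nonlinearity for a larger but \emph{linearly}-dependent matrix, which is exactly what makes the Lindeberg swap feasible; without the linearization, differentiating the projection repeatedly would produce an uncontrollable cascade of terms. The cutoff $\mathcal{T}_n(\bbX)$ together with (\ref{a39}) is then what keeps the resolvent entries of $\bbH$ polynomially bounded off a null event, so that the fourth-moment remainder in each Lindeberg step is controllable. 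Finally, the sharper rate $1/(n\eta)$ for the averaged quantity $\underline{m}_n$ in part (ii) is obtained by a fluctuation-averaging argument in the spirit of \cite{LHY2011} and \cite{BPZ2014a}, which extracts the additional $\sqrt{n\eta}$ factor from cancellations in the sum over diagonal entries $G_{ii}$.
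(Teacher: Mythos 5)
Your treatment of the Gaussian case is essentially the paper's: condition on $\bbY$, use $\bbA \overset{d}{=} \bbD^{-1/2}\bbZ\bbZ^T\bbD^{-1/2}$ to invoke the anisotropic local law of \cite{KY14}, and lift the estimate from $\bbG_m$ to the full block resolvent via $\bbG=\bbA_3+\bbA_2\bbG_m\bbA_2^T$, with Lemma \ref{1121-1} and the cutoff $\mathcal{T}_n(\bbX)$ controlling the remaining random blocks. The role you assign to the linearization (trading the rational dependence on $\bbX$ through $(\bbU_2\bbX\bbX^T\bbU_2^T)^{-1}$ for a linear one) is also the right diagnosis.

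The comparison step, however, has a genuine quantitative gap. You assert that after third-moment cancellation each swap leaves a remainder of size $O(n^{-2})$, controlled by ``polynomial bounds on each factor,'' and that summing over the $mn$ swaps finishes the proof. But $mn\asymp n^2$ swaps times $O(n^{-2})$ per swap is $O(1)$, not $O(\Psi)$: near the edge with $\eta\asymp n^{-2/3}$ one has $\Psi\asymp n^{-1/3}$, so an $O(1)$ total error proves nothing. The necessary smallness comes from the \emph{sum over the swapped index} $(i,\mu)$: in each derivative term exactly two resolvent factors carry the external vectors, and the Ward identity $\sum_i|G_{\bbv i}|^2=\Im G_{\bbv\bbv}/\eta$ converts $\sum_{i,\mu}$ of their squares into $n\phi_a^2$ with $\phi_a^2\prec\Psi(\Psi+F_{aa})$; combined with the prefactor $n^{-k/2}$, $k\ge 4$, and a H\"older step to absorb the mixed terms $\Psi^{j}\mathbb{E}F_1^{2q-j}$, this is what yields $(n^{C\delta}\Psi)^{2q}$ (see (\ref{1117.7})--(\ref{1120.8})). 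Moreover the resulting bound unavoidably contains the unknown $\|\mathbb{E}|F(\bbX^t)|^{2q}\|_\infty$ itself, so the estimate is self-consistent and must be closed by Gronwall's inequality (Lemma \ref{1216-2}); this is precisely why the paper uses the continuous interpolation $F^t_{i\mu}=tF^1_{i\mu}+(1-t)F^0_{i\mu}$ rather than a discrete telescoping path, which has no mechanism for absorbing that feedback term. Two smaller inaccuracies: the paper needs only $\mathbb{E}X^3=0$ (the second-order term is absorbed into the auxiliary function $g_{ab}$ and only $k\ge4$ derivatives must be estimated), and part (ii) is not a separate fluctuation-averaging argument but a rerun of the same interpolation for $|\underline{m}_n-\underline{m}|^{2q}$, using (i) to upgrade $\phi^2\prec\Psi^2$ and hence $\frac1m\sum_kB_{k,k,i,\mu}(g_h)\prec\Psi^2$.
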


\subsection{Local law (\ref{0310.1})}

The aim of this subsection is to prove (\ref{0310.1}).  Before proving (\ref{0310.1}) we first collect some frequently used bounds below. Recall the definition of $m(z)$ in (\ref{b8}). For $z\in E(\tau,n)$ one may verify that
\begin{equation}\label{a18}
M_2\leq |m(z)| \leq M_1
\end{equation}
and
\begin{equation}\label{a19}
Im(m(z))\geq M \eta.
\end{equation}
(see Lemma 2.3 in \cite{BPWZ2014b} or Lemma 3.1 and Lemma 3.2 in \cite{s1}). Order the eigenvalues of $\bbD^{-1}$ as $d_1\geq d_2\geq\cdots\geq d_m$. From (\ref{a22}) and (\ref{a23}) we conclude that on the event $S_{\xi}$ defined in (\ref{6.2})
\begin{equation}\label{a24}
\limsup\limits_{p}\hat{c}_md_1<1.
\end{equation}
Here we remind the readers that $d_1$ corresponds to $\frac{1}{\gamma_{m,m}}$ there, validity of (\ref{a24}) does not depend on the Gaussian assumption there and we do not assume the entries of $\bbY$ to be Gaussian in the last section. In addition, with probability one
\begin{equation}\label{a25}
\hat{c}_m=-\lim\limits_{z\in \mathcal{C}^+\rightarrow \hat \mu_m}m(z),
\end{equation}
(one may see below (1.8) in \cite{BPWZ2014b} or \cite{s1}). It follows from (\ref{a24}) and (\ref{a25}) that for $z\in E_{+}$ on the event $S_{\xi}$
\begin{equation}\label{a21}
 |1+dm(z)|\geq \tau_2,\quad d\in[d_m,d_1]
\end{equation}
for some positive constant $\tau_2$ (one may also see (iv) of Lemma 2.3 of \cite{BPWZ2014b}). We then conclude (\ref{a18}) and (\ref{a21}) that on the event $S_{\xi}$
\begin{equation}\label{a26}
 \|\bold\Sigma\|=\|\bold\Sigma(z)\|\leq M,\quad d\in[d_m,d_1],
\end{equation}
where $\bold\Sigma=\bold\Sigma(z)$ is defined in (\ref{a17}). Moreover, for $z\in E_{+}$ it follows from Lemma \ref{1121-1}, (\ref{b5}), and (\ref{a18})-(\ref{a21}) that
\begin{equation}\label{0304.5}
\|\bbF(z) \|\prec 1,\quad \|\bbA_2\|\prec 1, \quad \|\bbA_3\|\prec 1.
\end{equation}
We further introduce more notations with bold lower index
$$\bbG_{\bbv s}= \langle\bbv,\bbG\bbe_s\rangle,\ \  \bbG_{\bbv \bbw}= \langle\bbv,\bbG\bbw\rangle, \ \ \text{and} \ \  \bbG_{s\bbv }= \langle\bbe_s,\bbG\bbv\rangle,$$
where $\bbe_s$ is the unit vector with the s-th coordinate equal to 1. In the sequel, if the lower index of a matrix is bold, then it represents the inner product above and otherwise it means one entry of the corresponding matrix.
Fix $\tau>0$. For any $z\in E(\tau,n)$ we claim that
\begin{eqnarray}\label{1119.12}
\|\bbG(z)\mathcal{T}_n(\bbX)\|\le Cn^{10}\eta^{-1}, \ \ \|\partial_z\bbG(z)\mathcal{T}_n(\bbX)\|\le Cn^{10}\eta^{-2},
\end{eqnarray}
\begin{eqnarray}\label{1219.2}
\|\bbG(z)\|\prec \eta^{-1}, \ \ \|\partial_z\bbG(z)\|\prec \eta^{-2},
\end{eqnarray}
\begin{eqnarray}\label{1124.13}
\sum_{i=1}^m|\bbG_{\mathbf{v}i}|^2=\frac{\Im \bbG_{\mathbf{v}\mathbf{v}}}{\eta}, \quad \|\bbF(z)\mathcal{T}_n(\bbX)I(S_{\xi})\|\le Cn^{4}\eta^{-1}
\end{eqnarray}
and
\begin{eqnarray}\label{1230.1}
|G_{\mathbf{v}\mathbf{v}}|^2\prec\frac{\Im G_{\mathbf{v}\mathbf{v}}}{\eta}+1,
\end{eqnarray}
where and in what follows $I(\cdot)$ denotes an indicator function.
Indeed, the estimates (\ref{1119.12}) 
 follow from  (\ref{1219.1}) and the definition of $\mathcal{T}_n(\bbX)$ directly. (\ref{1219.2}) and (\ref{1230.1}) about the partial order follow from Lemma \ref{1121-1}, (\ref{b5}) and (\ref{1219.1}). The first equality in (\ref{1124.13}) is straightforward and the second one is from the definition of $\mathcal{T}_n(\bbX)$ directly. 

In the Gaussian case Theorem \ref{1119-3} can be obtained from by Theorem 2.10 of [7]. Indeed, from (\ref{1219.1}) one can see a key observation that each block of $\bbG(z)$ can be represented as a linear combination of the blocks of (3.3) in \cite{KY14}
in the Gaussian case. We now demonstrate such an observation by looking at two block matrices of $\bbG(z)$ and other blocks can checked similarly. For example, $\bbG(z)$ has a block matrix $\bbG_m \bbD^{-1/2}\bbU_1\bbX\bbX^T\bbU^T_2\bold\Gamma$. Note that $\bbU_1\bbX\bbX^T\bbU^T_2\bold\Gamma$ is independent of $\bbG_m$ given $\bbU_2\bbX$ due to $(\bbI-\bbP_{\bbX^T\bbU^T_2})\bbX^T\bbU^T_2=0$ while from the end of the last section we see that
\begin{equation}\label{b4}
\bbA \xlongequal{d} \bbD^{-1/2}\bbZ\bbZ^*\bbD^{-1/2}
\end{equation}
given $\bbU_2\bbX$ under the gaussian case (see (\ref{0313.1}) for the definition of $\bbA$). It follows that this block can be regarded as the product of random $\bbG_m$ and a non-random matrix given $\bbU_2\bbX$. So the local law holds for this block from Theorem 2.10 of [7] by absorbing the nonrandom matrix into the fixed vector $\bbv$ or $\bbw$ (note that (\ref{a21}) is required in the conditions of Theorem 2.10 of [7]). A second block matrix of $\bbG_m$ is $\bbG_m \bbT$ with $\bbT=\bbD^{-1/2}\bbU_1\bbX(\bbI-\bbP_{\bbX^T\bbU^T_2})$. From the end of the last section and (\ref{b4}) we see that $\bbG_m =(\bbT\bbT^*-z\bbI)^{-1}$ due to $(\bbI-\bbP_{\bbX^T\bbU^T_2})$ is a projection matrix so that this block is just one of the block in (3.3) in \cite{KY14}. 

\subsubsection{Proving (\ref{0310.1}) for general distributions}

We next prove (\ref{0310.1}) for general distributions by fixing $\bbY$ first since $\bbX$ and $\bbY$ are independent (the dominated convergence theorem then ensures (\ref{0310.1})). However to simplify notations we drop the statements about conditioning on $\bbY$ as well as the event $S_{\xi}$. In other words, whenever we come across expectations they should be understood as conditional expectations and involve $I(S_\xi)$. For example, (\ref{a29}) below should be understood as follows
$$\mathbb{E}\Big(|F_{ab}(\bbX,z)|^{2q}I(S_{\xi})\Big|\bbY\Big)\leq (n^{24\delta}\Psi)^{2q}.$$

In order to prove Theorem \ref{1119-3}, it suffices to show that for any deterministic orthogonal matrices $\bbV_1$ and $\bbV_2$, we have
\begin{equation}\label{1116}
\|\bbV_1(\bbG(z)-\bbF(z))\bbV_2^T\|_{\infty}\prec \Psi,
\end{equation}
for all $z\in E_{+}$. 
We define $S$ to be a $\ep$-net of $E(\tau,n)$ with $\ep=n^{-10}$ and the cardinality of $S$, $|S|$, not bigger than $n^{30}$. Note that the function $\bbD^{1/2}(\bbG(z)-\bbF(z))\bbD^{1/2}$ is Lipschitz continuous with
respect to the operation norm in $E_{+}$ and the Lipschitz constant is $Mn^2\|\bbX\bbX^*\|+M n^2\|1/\lambda_{\min}(\bbU_2\bbX)\|$. By (\ref{b5}) it then suffices to focus on $S$ to prove Theorem \ref{1119-3} by Lemma \ref{1121-1}.

Following [7] the main idea of the proof is an induction argument from bigger imaginary parts to smaller imaginary parts.  Set $\delta$ to be a sufficient small positive constant such that $n^{24\delta}\Psi\ll1$. For any given $\eta\ge \frac{1}{n}$, we define a sequence of numbers $\eta_0\le \eta_1\le \eta_2...\le \eta_L$ with
\begin{equation}\label{a20}\eta_l=\eta n^{l\delta}, \ \ (l=0,1,...,L-1), \ \ \eta_L= 1,
\end{equation}
where
$$L\equiv L(\eta)=\max\{l\in \mathbb{N}:\eta n^{l\delta}< n^{-\delta}\}.$$
One can see that $L\leq \delta^{-1}+1$ by the definition.
From now on we will work on the net $S$ containing the points 
$E+i\eta_l\in S, \ \ l=0,...,L.$
Moreover define $ S_k= \{z\in  S:\Im z\ge n^{-\delta k}\}$ and sequence of properties
\begin{equation}\label{1116.4}
B_k=\{\|\bbV_1(\bbG(z)-\bbF(z))\bbV_2^T\|_{\infty}\prec 1,\quad \text{for any z}\  \in S_k \}
\end{equation}
\begin{equation}\label{1116.5}
C_k=\{\|\bbV_1(\bbG(z)-\bbF(z))\bbV _2^T\|_{\infty}\prec n^{24\delta} \Psi, \quad \text{for any z}\  \in S_k \}.
\end{equation}

We start the induction by considering property $B_0$. We claim that 
the property $B_0$ holds.
Indeed we conclude from (\ref{1219.1}) and(\ref{0304.5}) that
$$\|\bbV_1(\bbG(z)-\bbF(z))\bbV_2^T\|_{\infty}\prec \|\bbG_m(z)\|+\|\bbF(z)\|+1\prec 1,$$
as claimed.
Moreover it's easy to see that property $C_k$ implies property $B_k$ by the choice of $\delta$ such that $n^{24\delta}\Psi\ll 1$.  We next prove that
property $B_{k-1}$ implies property $C_k$ for any $1\le k \le \delta^{-1}$. If this is true then the induction is complete and (\ref{1116}) holds for all $z\in S$.

To this end, we calculate the higher moments of the following function
\begin{equation}\label{a5}F_{ab}(\bbX,z)= \left((\bbJ_1\bbG(z)\bbJ_2^T)_{ab}-(\bbJ_1\bbF(z)\bbJ_2^T)_{ab}\right)\mathcal{T}_n(\bbX),\end{equation}
where
$\bbJ_1, \bbJ_2 \in \mathcal{L}= \{1,\bold\Delta, \bbV\}$, $\bold\Delta$ is defined in (\ref{a6}) below and $\bbV$ is any deterministic orthogonal matrix. Lemma \ref{1216-3} below, Markov's inequality and (\ref{0304.7}) then ensure that property $B_{k-1}$ implies property $C_k$. 

\begin{lem}\label{1216-3}
Let q be a positive constant and $k\le \delta^{-1}$. Suppose that property $B_{k-1}$ in (\ref{1116.4}) holds. 
 Then
\begin{equation}\label{a29}\mathbb{E}\Big(|F_{ab}(\bbX,z)|^{2q}\Big)\leq (n^{24\delta}\Psi)^{2q},\end{equation}
for all $1\le a,b\le n+p$ and $z\in S_k$.
\end{lem}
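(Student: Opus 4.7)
The plan is to prove the moment bound by a Lindeberg–type Green function comparison against the Gaussian reference matrix $\bbX^0$. Since $\bbX$ matches $\bbX^0$ to order three (Definition \ref{1222-1}), I expect to obtain $\mathbb{E}|F_{ab}(\bbX,z)|^{2q}\le \mathbb{E}|F_{ab}(\bbX^0,z)|^{2q}+o(\Psi^{2q})$ via a swap–one–entry–at–a–time argument, and then to invoke the Gaussian local law discussed immediately after (\ref{b4}) (obtained by the block decomposition of $\bbG(z)$ and Theorem 2.10 of \cite{KY14}) to control $\mathbb{E}|F_{ab}(\bbX^0,z)|^{2q}$ by $\Psi^{2q}$.

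More concretely, order the entries of $\bbX$ in some way $1,\ldots,pn$ and denote by $\bbX^{(\ell)}$ the interpolating matrix whose first $\ell$ entries are Gaussian and whose remaining $pn-\ell$ entries come from $\bbX$. Writing
\begin{equation*}
\mathbb{E}|F_{ab}(\bbX,z)|^{2q}-\mathbb{E}|F_{ab}(\bbX^0,z)|^{2q}=\sum_{\ell=1}^{pn}\Big(\mathbb{E}|F_{ab}(\bbX^{(\ell-1)},z)|^{2q}-\mathbb{E}|F_{ab}(\bbX^{(\ell)},z)|^{2q}\Big),
\end{equation*}
I would Taylor-expand each increment in the single replaced entry $X_{ij}$ up to order $4$. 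The derivatives of $F_{ab}$ in $X_{ij}$ are computed by the resolvent identity $\partial_{X_{ij}}\bbG=-\bbG(\partial_{X_{ij}}\bbH)\bbG$ applied to the linearization $\bbH$ in (\ref{1205.1}); the cutoff $\mathcal{T}_n(\bbX)$ is exactly what keeps $\|\bbG\|$ and $\|\bbF\|$ polynomially bounded so that all such derivatives are well controlled by (\ref{1119.12})--(\ref{1219.2}). By the moment matching assumption (\ref{1129.1}), the first three order terms cancel between the two interpolation steps up to an $\exp(-(\log p)^C)$ error, and each remaining fourth-order term is controlled by $\mathbb{E}|X_{ij}-X_{ij}^0|^4=O(n^{-2})$ times a bound on the fourth derivative.

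The main engine for handling the fourth-derivative terms is the a priori estimate supplied by the inductive hypothesis $B_{k-1}$. Using (\ref{1116.4}) together with the Ward-type identity in (\ref{1124.13}) and the elementary bound (\ref{1230.1}), one obtains
\begin{equation*}
\big|\langle \bbv,\bbG(z)\bbw\rangle\big|\prec 1,\qquad \sum_{i}|\bbG_{\bbv i}|^2\prec \eta^{-1}(\Im \bbF_{\bbv\bbv}+\Psi)\prec \Psi^2+\eta^{-1}\Im m(z),
\end{equation*}
for $z\in S_k$, and analogous bounds for the mixed blocks appearing in $\bbG$. Substituting these into the fourth-order terms and summing over $\ell$ produces an overall error of the form $C pn\cdot n^{-2}\cdot n^{o(\delta)}\Psi^{2q-1}$, which is comfortably $\le (n^{24\delta}\Psi)^{2q}/2$ for the choice of $\delta$ fixed above (\ref{a20}). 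For the Gaussian reference, the block structure of $\bbG(\bbX^0)$ around the equivalence (\ref{b4}) reduces the entries of $F_{ab}(\bbX^0,z)$ to linear combinations of entries of the sample-covariance Green function $(\bbT\bbT^\ast-z)^{-1}$ with fixed vectors, whose high-moment $\Psi$-bound is given by Theorem 2.10 of \cite{KY14}; combining this with (\ref{0304.5}) and (\ref{a21}) yields $\mathbb{E}|F_{ab}(\bbX^0,z)|^{2q}\le \Psi^{2q}$.

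The hardest step will be obtaining uniform control of the fourth-order derivatives of $F_{ab}(\bbX,z)\mathcal{T}_n(\bbX)$ with respect to a single entry $X_{ij}$. The difficulty is that $F_{ab}$ depends on $\bbX$ in a highly nonlinear way through the projection $\bbI-\bbP_{\bbX^T\bbU_2^T}$ and through the matrix $\bold\Gamma=(\bbU_2\bbX\bbX^T\bbU_2^T)^{-1}$ that sits inside $\bbF(z)$; consequently each differentiation produces several new factors of $\bbG$, $\bbA_2$ and $\bold\Gamma$. The cutoff $\mathcal{T}_n(\bbX)$ together with Lemma \ref{1121-1}, (\ref{0304.5}) and the a priori $B_{k-1}$ must be combined carefully at every differentiation in order to keep the polynomial factors of $n$ under control, and to ensure that the total moment bound truly self-improves from $O(1)$ (property $B_{k-1}$) down to $O(n^{24\delta}\Psi)$ (property $C_k$) for every $z\in S_k$.
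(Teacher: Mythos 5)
Your overall strategy (compare to the Gaussian reference, use the block decomposition around (\ref{b4}) plus Theorem 2.10 of \cite{KY14} for $\bbX^0$, and exploit the a priori bound from $B_{k-1}$ together with the Ward identity to gain from the summation over entries) is the same as the paper's; the paper uses the continuous Knowles--Yin interpolation $\bbX^t$ of (\ref{a4})--(\ref{1124.14}) rather than your discrete one-entry-at-a-time swap, but that difference is cosmetic. The genuine gap is in how you close the estimate. You claim the fourth-order remainder, summed over all $pn$ entries, is bounded \emph{directly} by $Cpn\cdot n^{-2}\cdot n^{o(\delta)}\Psi^{2q-1}\le(n^{24\delta}\Psi)^{2q}/2$. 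This cannot work: first, $\Psi^{2q-1}=\Psi^{2q}/\Psi$ can be as large as $n\eta\,\Psi^{2q}$, so it is not dominated by $(n^{24\delta}\Psi)^{2q}$; second, and more fundamentally, the only a priori control you have on the undifferentiated factors of $F_{ab}$ is $|F_{ab}|\prec n^{2\delta}$ (propagated from $B_{k-1}$ on $S_{k-1}$ to $S_k$ via the dyadic argument of Lemma \ref{1116.1} --- note that even your assertion $|\langle\bbv,\bbG\bbw\rangle|\prec 1$ on $S_k$ does not follow from $B_{k-1}$ without that step). So after the Ward-identity gain the generic fourth-derivative term is of the form $\Psi^{j}\,\mathbb{E}F_1^{2q-j}(\bbX)$ with $j\le 2q$, not $\Psi^{2q}$: the fluctuating part $|F|/\eta$ that you dropped from $\Im\bbG_{\bbv\bbv}/\eta$ is precisely what produces these self-referential terms (see (\ref{1120.5}) and (\ref{1120.8})).

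Consequently the correct conclusion of the comparison step is not an absolute bound but the self-consistent estimate
\begin{equation*}
\sum_{i,\mu}\Big[\cdots\Big]=O\Big((n^{24\delta}\Psi)^{2q}+\|\mathbb{E}\bbL(\bbX^{t},z)\|_{\infty}\Big),
\end{equation*}
as in Lemma \ref{1119-5}, and one must then absorb the $\mathbb{E}|F|^{2q}$ term: the paper does this with Young/H\"older as in (\ref{a30}) together with Gr\"onwall's inequality (Lemma \ref{1216-2}) applied to $t\mapsto\max_{a,b}\mathbb{E}|F_{ab}(\bbX^t,z)|^{2q}$, seeded by the Gaussian bound (\ref{a31}). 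In your discrete swap the analogue is a discrete Gr\"onwall over the $pn$ intermediate matrices $\bbX^{(\ell)}$, which additionally requires verifying that the a priori bounds ($B_{k-1}$, the cutoff properties, Lemma \ref{1121-1}) hold uniformly for all intermediate matrices --- a point you do not address. There is also a secondary bookkeeping issue you flag but do not resolve: the paper's expansion is taken around $\lambda=0$ (not around the Gaussian entry), so the second-order term survives and is deliberately packaged into $g_{ab}$ in (\ref{1119.13}), and the terms of order $k=4,\dots,8q$ must themselves be re-expressed self-consistently at $X^t_{i\mu}$ (the reduction of (\ref{1123.6}) to (\ref{1123.7})); none of this appears in your outline. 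Without the self-consistent/Gr\"onwall structure the argument does not close.
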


The proof will be complete if we prove Lemma \ref{1216-3}. Before proceeding, we present a simple but frequently used lemma which can help us transfer the partial order of
two random variables to the partial order of the expectations.
\begin{lem}\label{1123-3}
 Let $\zeta$ be a random variable satisfying $\zeta \prec \nu$ where positive $\nu$ may be random or deterministic. Suppose $|\zeta|\leq n^{M_0}$ for some positive constant $M_0$.
  Then \begin{equation}\label{b6} \mathbb{E}\zeta \prec (E\nu+n^{M_0-D}),\end{equation}
  where $D$ is a sufficiently large positive constant.
\end{lem}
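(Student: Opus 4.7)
The plan is to apply the definition of stochastic domination directly, combined with a Markov-style truncation argument that exploits the a priori bound $|\zeta|\le n^{M_0}$. First I would fix an arbitrary small $\varepsilon>0$ and introduce the event $\Omega_{\varepsilon}=\{|\zeta|\le n^{\varepsilon}\nu\}$. The hypothesis $\zeta\prec\nu$ provides $P(\Omega_{\varepsilon}^{c})\le n^{-\sigma}$ for any prescribed constant $\sigma>0$ once $n$ is large enough, and this estimate remains valid when $\nu$ is random since the event in the definition only involves the pair $(\zeta,\nu)$.

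Next I would split
\[
|\mathbb{E}\zeta|\le \mathbb{E}\bigl[|\zeta|\,I(\Omega_{\varepsilon})\bigr]+\mathbb{E}\bigl[|\zeta|\,I(\Omega_{\varepsilon}^{c})\bigr].
\]
The first term is at most $n^{\varepsilon}\mathbb{E}\nu$ by the very definition of $\Omega_{\varepsilon}$. The second term is at most $n^{M_0}P(\Omega_{\varepsilon}^{c})\le n^{M_0-\sigma}$ by the uniform bound $|\zeta|\le n^{M_0}$. Choosing $\sigma=D$ yields
\[
|\mathbb{E}\zeta|\le n^{\varepsilon}\mathbb{E}\nu+n^{M_0-D}\le n^{\varepsilon}\bigl(\mathbb{E}\nu+n^{M_0-D}\bigr),
\]
and since $\varepsilon>0$ is arbitrary this is exactly the relation $\mathbb{E}\zeta\prec \mathbb{E}\nu+n^{M_0-D}$ between the two (deterministic) quantities.

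There is no real obstacle here; the entire argument is a one-line truncation. The point worth emphasizing is that the role of the hypothesis $|\zeta|\le n^{M_0}$ is precisely to absorb the contribution from the exceptional event $\Omega_{\varepsilon}^{c}$ on which stochastic domination fails, and this is why the polynomial error $n^{M_0-D}$ (rather than something sharper) must appear on the right-hand side. Since $\sigma$ in the definition of $\prec$ can be taken arbitrarily large, the constant $D$ in the conclusion can likewise be taken as large as desired.
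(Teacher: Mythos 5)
Your argument is correct and coincides with the paper's own proof: both split $\mathbb{E}\zeta$ over the event $\{|\zeta|\le n^{\varepsilon}\nu\}$ and its complement, bounding the first piece by $n^{\varepsilon}\mathbb{E}\nu$ and the second by $n^{M_0}$ times the exceptional probability $n^{-D}$. Nothing further is needed.
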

\begin{proof}
Since $\zeta \prec \nu$ there exists a sufficiently small positive $\ep$ and sufficiently large $D$ so that
$$P(\zeta \ge n^{\ep}\nu)\le n^{-D}.$$
Define the event $A_{\ep}=\{ \zeta \le n^{\ep}\nu\}$. Write
$$|\mathbb{E}\zeta|=\Big|\mathbb{E}\zeta I(A_{\ep})+\mathbb{E}\zeta I(A^c_{\ep})\Big|\le n^{\ep}\mathbb{E}\nu+n^{M_0}P(A^c_{\ep})\le n^{\ep}\mathbb{E}\nu+n^{M_0-D}.$$
\end{proof}

We now claim that
 \begin{equation}\label{a31}\mathbb{E}\Big(|F_{ab}(\bbX^0,z)|^{2q}\Big)\leq (n^{24\delta}\Psi)^{2q},\end{equation}
 if $\bbX$ in Lemma \ref{1216-3} is replaced by the corresponding Gaussian random matrix $\bbX^0=(X_{i\mu}^0)= \bbX^{Gauss}$ consisting of Gaussian random variables with mean zero and variance one. Indeed, one can see that $|F_{ab}(\bbX^0,z)|^{2q}\prec \Psi^{2q}$ from the paragraph containing (\ref{b4}). To apply (\ref{b6}) to conclude the claim we need $|F_{ab}(\bbX^0,z)|\leq n^{M_0}$, which follows immediately from the first estimate in (\ref{1119.12}) and the second estimate in (\ref{1124.13}).

\subsubsection{Proving Lemma \ref{1216-3} by the interpolation method}

We next finish Lemma \ref{1216-3} for the general distributions by the interpolation method developed by \cite{KY14}. To this end we need to define the interpolation matrix $\bbX^t$ between $\bbX^1=(X_{i\mu}^1)= \bbX$ and $\bbX^0$. 
For $1\le i\le p $ and $1\le \mu \le n$, denote the distribution function of the random variables $X_{i\mu}^{u}$ by $F_{i\mu}^u$ for $u=0,1$. For $t \in [0,1]$, we define the interpolated distribution function by
\begin{equation}\label{a4}F_{i\mu}^{t}= t F^1_{i\mu}+(1-t)F_{i\mu}^0.
\end{equation}
Define the interpolation matrix $\bbX^t=(X_{i\mu}^{t})$ with $F_{i\mu}^{t}$ being the distribution of $X_{i\mu}^{t}$ and $\{X_{i\mu}^{t}\}$ are
independent for $i,\mu$. We furthermore introduce the matrix
\begin{equation}\label{a7}\bbX_{(i\mu)}^{t, \lambda}= \bbX^t+(\lambda-X_{i\mu}^t) \bbe_i\bbe_{\mu}^T,\end{equation}
which differs from $\bbX^t$ at the $(i,\mu)$ position only.
We also define $\bbG^t(z)= \bbG(\bbX^t,z)$ and $\bbG^{t,\lambda}_{(i\mu)}(z)= \bbG(\bbX^{t,\lambda}_{(i\mu)},z)$, the analogues of $\bbG(z)$ defined above (\ref{0202.1}), by replacing the random matrix $\bbX$ in $\bbG(z)$ with $\bbX^t$ and $\bbX^{t,\lambda}_{(i\mu)}$ respectively.

We now need the following interpolation formula and one may see Lemma 6.9 of \cite{KY14}.
\begin{lem}\label{1124.14*}
For any function $F:\mathbb{R}^{p\times n}\rightarrow \mathbb{C}$, we have
\begin{eqnarray}\label{1124.14}
\mathbb{E}F(\bbX^1)-\mathbb{E}F(\bbX^0)=\int_0^1dt\sum_{i=1}^p\sum_{\mu=1}^n\left[\mathbb{E}F(\bbX^{t, \bbX_{(i\mu)}^1}_{(i\mu)})-\mathbb{E}F(\bbX^{t, \bbX_{(i\mu)}^0}_{(i\mu)})\right].
\end{eqnarray}
\end{lem}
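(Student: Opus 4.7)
The plan is to prove the identity by constructing a smooth interpolation of expectations in $t$, differentiating under the integral sign, and integrating back. First I would set
$$G(t) := \mathbb{E}\, F(\bbX^t), \qquad t\in[0,1],$$
and, using the entrywise independence built into the definition of $\bbX^t$, write this as a $pn$-fold integral against the product measure
$$G(t)=\int_{\mathbb{R}^{p\times n}} F(x)\,\prod_{i=1}^{p}\prod_{\mu=1}^{n} dF^{t}_{i\mu}(x_{i\mu}).$$
The key structural point is that the interpolation (\ref{a4}) is \emph{affine} in $t$, so the measure-valued derivative $\partial_t dF^{t}_{i\mu}= dF^{1}_{i\mu}-dF^{0}_{i\mu}$ is a fixed signed measure that does not depend on $t$.

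Next I would apply the Leibniz product rule in $t$ to the integrand. Each of the $pn$ terms has the derivative landing on exactly one factor $dF^{t}_{i\mu}$, producing
$$\int F(x)\,\bigl(dF^{1}_{i\mu}-dF^{0}_{i\mu}\bigr)(x_{i\mu})\prod_{(j,\nu)\ne(i,\mu)} dF^{t}_{j\nu}(x_{j\nu}).$$
By the definition (\ref{a7}) of $\bbX^{t,\lambda}_{(i\mu)}$, integrating $F$ against $dF^{1}_{i\mu}(x_{i\mu})$ (respectively $dF^{0}_{i\mu}(x_{i\mu})$) in the single coordinate $(i,\mu)$ while leaving all others distributed according to $F^{t}$ is exactly $\mathbb{E} F(\bbX^{t,X^{1}_{i\mu}}_{(i\mu)})$ (respectively $\mathbb{E} F(\bbX^{t,X^{0}_{i\mu}}_{(i\mu)})$). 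Hence
$$G'(t)=\sum_{i=1}^{p}\sum_{\mu=1}^{n}\Bigl[\mathbb{E}F(\bbX^{t,X^{1}_{i\mu}}_{(i\mu)})-\mathbb{E}F(\bbX^{t,X^{0}_{i\mu}}_{(i\mu)})\Bigr],$$
and integrating in $t$ from $0$ to $1$ via the fundamental theorem of calculus gives precisely (\ref{1124.14}).

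The main technical point is to justify the interchange of $\partial_t$ and the integral. Since each factor $dF^{t}_{i\mu}$ is affine in $t$ with coefficients $dF^{0}_{i\mu},dF^{1}_{i\mu}$ that do not depend on $t$, the integrand is a polynomial of degree $pn$ in $t$ for each fixed $x$; thus the Leibniz rule is legitimate provided we have a $t$-uniform integrable majorant for $F$. In the intended application $F(\bbX)=|F_{ab}(\bbX,z)|^{2q}$, which is uniformly bounded by a power of $n$ thanks to the cutoff $\mathcal{T}_n(\bbX)$ built into (\ref{a5}) (cf.\ (\ref{1119.12})--(\ref{1124.13})), while Condition \ref{cond1} gives the uniform moment control on $X^{t}_{i\mu}=tX^{1}_{i\mu}+(1-t)X^{0}_{i\mu}$ needed for dominated convergence. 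Alternatively, one may prove (\ref{1124.14}) first for bounded smooth $F$ and pass to the general case by truncation. Neither argument requires any property of the specific matrix $\bbG(z)$ or $\bbF(z)$, which is why the lemma is purely combinatorial-probabilistic.
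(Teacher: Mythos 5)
Your proof is correct; the paper itself gives no proof of this lemma, simply citing Lemma 6.9 of \cite{KY14}, and your argument (affine-in-$t$ product measure, Leibniz rule, fundamental theorem of calculus) is exactly the standard derivation given there. The remark on the integrability needed to differentiate under the integral, supplied by the cutoff $\mathcal{T}_n(\bbX)$ in the intended application, is an appropriate touch.
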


%


To handle the right hand side of (\ref{1124.14}) we establish the following Lemma.
\begin{lem}\label{1119-5}
Fix an positive integer q and $k\le \delta^{-1}$. Suppose that property $B_{k-1}$ holds.
Then there exists some function $g_{ab}(.,z)$ such that for $t\in [0,1],u\in \{0,1\}, z\in S_k$
\begin{equation}\label{1119.13}
\sum_{i=1}^p\sum_{\mu=1}^n\left[\mathbb{E}\Big(|F_{ab}(\bbX^{t, X_{i\mu}^u}_{(i\mu)},z)|^{2q})\Big)-\mathbb{E}|g_{ab}(\bbX^{t, 0}_{(i\mu)},z)|^{2q}\right]=O((n^{24\delta}\Psi)^{2q}+\|\mathbb{E}\bbL(\bbX^{t},z)\|_{\infty}),
\end{equation}
with the matrix $\bbL(\bbX^{t},z)=\Big(|F_{ab}(\bbX^{t},z)|^{2q}\Big)_{1\le a, b\le n+p}$.
\end{lem}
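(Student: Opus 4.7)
My plan is a Lindeberg-swap Taylor expansion tailored to the block resolvent $\bbG$. For each fixed pair $(i,\mu)$ and each $u\in\{0,1\}$, I Taylor-expand $F_{ab}(\bbX^{t,x}_{(i\mu)},z)$ in the scalar variable $x$ around $x=0$ to a large order $K=K(q)$:
$$F_{ab}(\bbX^{t,x}_{(i\mu)},z)=\sum_{k=0}^{K}\frac{x^k}{k!}F^{(k)}_{ab}+R_K(x),\qquad F^{(k)}_{ab}=\bigl[\partial_x^k F_{ab}(\bbX^{t,x}_{(i\mu)},z)\bigr]_{x=0}.$$
Differentiating the resolvent via $\partial_x\bbG=-\bbG(\partial_x\bbH)\bbG$ applied to the linearization (\ref{1205.1}), each $F^{(k)}_{ab}$ becomes an explicit polynomial in $O(1)$ many entries of $\bbG$ paired with the unit vectors $\bbe_i$ and $\bbe_{p+\mu}$, with scalar coefficients involving entries of $\bbD^{-1/2}\bbU_1$ and $\bbU_2$ that are uniformly $O(1)$ on the high-probability event where (\ref{a21})--(\ref{a26}) hold. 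The inductive hypothesis $B_{k-1}$ promotes every entry of $\bbG$ to $\prec 1$ throughout the net $S_k\subset S_{k-1}$, so each $F^{(k)}_{ab}\prec 1$.

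Next I expand $|F_{ab}|^{2q}$ by the multinomial rule and take expectation in the scalar $X^u_{i\mu}$, which is independent of $\bbX^{t,0}_{(i\mu)}$. The result is a sum over multi-indices $(k_1,\ldots,k_{2q})$ of contributions
$$\mathbb{E}(X^u_{i\mu})^{k_1+\cdots+k_{2q}}\,\mathbb{E}\Bigl[\textstyle\prod_{j=1}^{2q}F^{(k_j)}_{ab}\text{ or its conjugate}\Bigr].$$
For $\sum_j k_j\le 3$ the scalar moments of $X^0_{i\mu}$ and $X^1_{i\mu}$ coincide by Definition \ref{1222-1}, so these contributions are identical for $u=0$ and $u=1$; I absorb them into $\mathbb{E}|g_{ab}|^{2q}$ by defining
$$g_{ab}(Y,z)=\sum_{k=0}^{3}\frac{\tilde\xi^k}{k!}F^{(k)}_{ab}(Y,z),$$
with $\tilde\xi$ an auxiliary scalar independent of $Y$ sharing the first three moments of $X^u_{i\mu}$ (for example a Gaussian of variance $(m+n-p)^{-1}$). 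Then $\mathbb{E}|g_{ab}|^{2q}$ reproduces exactly the $\sum_j k_j\le 3$ piece of $\mathbb{E}|F_{ab}|^{2q}$, and only multi-indices with $\sum_j k_j\ge 4$, together with the Taylor tail $R_K$, survive in the difference on the left-hand side.

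For a multi-index with $\ell:=\sum_jk_j\ge 4$, Condition \ref{cond1} combined with $\mathrm{Var}(X^u_{i\mu})=(m+n-p)^{-1}=O(n^{-1})$ gives $|\mathbb{E}[(X^u_{i\mu})^\ell]|\le C_\ell n^{-\ell/2}$, so that the crude $\prec 1$ bound on $\prod_j F^{(k_j)}_{ab}$ followed by summation over $i,\mu$ only yields $pn\cdot n^{-\ell/2}=O(n^{2-\ell/2})$, merely $O(1)$ at $\ell=4$. To sharpen this to $(n^{24\delta}\Psi)^{2q}$, I observe that each product $\prod_j F^{(k_j)}_{ab}$ contains $\ell+2q$ entries of $\bbG$, many of which carry the summation indices $i$ or $p+\mu$; iterating the Ward identity $\sum_i|\bbG_{\bbv i}|^2=\Im\bbG_{\bbv\bbv}/\eta$ from (\ref{1124.13}) together with the local-law consequence $|G_{\bbv\bbv}|^2\prec\Im G_{\bbv\bbv}/\eta+1$ in (\ref{1230.1}) extracts $2q$ factors of size $\Psi$, with $n^{24\delta}$ absorbing the $\prec$-slack from the $2q$ factors and the $O(\delta^{-1})$ inductive iterations. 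Those monomials whose Ward pairing fails to close reconstruct objects of the form $(\bbJ_1\bbG\bbJ_2^T)_{a'b'}-(\bbJ_1\bbF\bbJ_2^T)_{a'b'}$ whose $2q$-th moment is, by definition, dominated by $\|\mathbb{E}\bbL(\bbX^t,z)\|_\infty$, producing the second term on the right-hand side. The Taylor tail $R_K$ is negligible by the crude bound (\ref{1119.12}), the cutoff $\mathcal{T}_n(\bbX)$, and Condition \ref{cond1}, once $K$ is chosen large in terms of $q$. The main obstacle will be the combinatorial bookkeeping in this final step: deciding precisely which contractions of $\bbG$-entries close into clean $\Psi^{2q}$ bounds via Ward, and which must be reabsorbed into $\|\mathbb{E}\bbL\|_\infty$, a task complicated by the $3\times 3$-block structure of $\partial_x\bbH$ read off from (\ref{1205.1}), which forces separate treatment of the $i$-row contributions weighted by $\bbD^{-1/2}\bbU_1$ and the $(p+\mu)$-row contributions weighted by $\bbU_2$.
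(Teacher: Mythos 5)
Your skeleton coincides with the paper's: swap a single entry, Taylor-expand $|F_{ab}|^{2q}$ in that entry, use third-order moment matching to absorb the low-order terms into $g_{ab}$, and use a Ward-identity gain to beat the trivial $pn\cdot n^{-\ell/2}$ count for $\ell\ge4$. But the step you defer as ``combinatorial bookkeeping'' is the entire content of the lemma, and the mechanism you describe for it would not close. A single sum over $(i,\mu)$ cannot ``extract $2q$ factors of size $\Psi$'': the Ward identity (\ref{1124.13}) converts $\sum_i\mathcal{H}_i^2$ into $n\phi_a^2$ with $\phi_a^2\prec\Psi(\Psi+F_{aa}(\bbX))$ as in (\ref{1120.5}), so each differentiated factor carrying the index $i$ or $\mu$ yields at most two powers of $\Psi$ \emph{or} a leftover factor of $F$, while the $2q-l$ undifferentiated factors remain as $F_{ab}^{2q-l}$. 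What one actually obtains is (\ref{1120.8}), a sum of mixed terms $(n^{24\delta}\Psi)^{j}\,\mathbb{E}F_1^{2q-j}$, and closing requires both the lower bound $\Psi\gtrsim n^{-1/2}$ (from $\Im m\gtrsim\eta$ in (\ref{a19})) to absorb the residual power $n^{2-\ell/2}$, and the Jensen--Young interpolation (\ref{a30}) to dominate each mixed term by $(n^{24\delta}\Psi)^{2q}+\|\mathbb{E}\bbL\|_\infty$. Your substitute---declaring that unclosed monomials are ``by definition'' dominated by $\|\mathbb{E}\bbL\|_\infty$---is not available, because they are products $\Psi^jF^{2q-j}$, not pure $2q$-th moments.

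Three further concrete problems. First, $S_{k-1}\subset S_k$, not the reverse: property $B_{k-1}$ gives $\prec1$ only on $S_{k-1}$, and extending to the new spectral parameters in $S_k$ costs a factor $n^{2\delta}$ per resolvent entry (Lemmas \ref{1116-3} and \ref{1116.1}); your claim that every entry of $\bbG$ is already $\prec1$ on $S_k$ is essentially what the induction is trying to prove. Second, you estimate all derivatives at $x=0$, but the right-hand side of (\ref{1119.13}) involves $\|\mathbb{E}\bbL(\bbX^t,z)\|_\infty$, i.e.\ moments of $F$ at the \emph{unmodified} matrix $\bbX^t$; the paper inserts the self-consistent transfer (\ref{1123.7})--(\ref{1123.8}) between derivatives at $X^t_{i\mu}$ and at $0$ precisely to reconcile this, and your argument leaves terms $\mathbb{E}F^{2q-l}(\bbX^{t,0}_{(i\mu)})$, which depend on $(i,\mu)$, unaccounted for. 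Third, $F_{ab}$ contains $(\bbJ_1\bbF(z)\bbJ_2^T)_{ab}$ and the cutoff $\mathcal{T}_n(\bbX)$, both of which depend on $X_{i\mu}$ through $\bold\Gamma=(\bbU_2\bbX\bbX^T\bbU_2^T)^{-1}$ and $\bbX\bbX^T$; differentiating only $\bbG$ via $\partial_x\bbG=-\bbG(\partial_x\bbH)\bbG$ misses these contributions, which the paper controls separately through the atom set $\mathcal{Q}(k)$ and the bounds (\ref{a10}), (\ref{h28}) and (\ref{1219.8}).
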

Lemma \ref{1119-5} immediately implies that for $z\in S_k$
\begin{equation}\label{b7}
\sum_{i=1}^p\sum_{\mu=1}^n\left[\mathbb{E}\Big(|F_{ab}(\bbX^{t, X_{i\mu}^1}_{(i\mu)},z)|^{2q}\Big)-\mathbb{E}\Big(|F_{ab}(\bbX^{t, X_{i\mu}^0}_{(i\mu)},z)|^{2q}\Big)\right]=O((n^{24\delta}\Psi)^{2q}+\|\mathbb{E}L(\bbX^{t},z)\|_{\infty}).
\end{equation}
To apply the above results we need the following Gronnwall's inequality.
\begin{lem}\label{1216-2}
Suppose that $\beta(t)$ is nonnegative and continuous and $u(t)$ is continuous. If for any $t\in\mathbb{R}$, $\alpha(t)$ is nondecreasing and $u(t)$ satisfies the following equality
$$
u(t)\leq \alpha(t)+\int_0^t\beta(s)u(s)ds,
$$
then
$$
u(t)\leq \alpha(t)\exp\Big(\int_0^t\beta(s)ds\Big).
$$
\end{lem}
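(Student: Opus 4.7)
The plan is to use the classical integrating-factor trick, which is the standard route to the integral form of Gronwall's inequality. First I would introduce the auxiliary function
\[
v(t)=\int_0^t \beta(s)u(s)\,ds,
\]
which is $C^1$ because $\beta$ and $u$ are continuous, with $v(0)=0$ and $v'(t)=\beta(t)u(t)$. Substituting the hypothesis $u(s)\leq \alpha(s)+v(s)$ into this derivative gives the linear differential inequality
\[
v'(t)-\beta(t)v(t)\leq \beta(t)\alpha(t).
\]

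Next I would multiply this inequality by the strictly positive integrating factor $\exp\bigl(-\int_0^t\beta(r)\,dr\bigr)$, which turns the left-hand side into the exact derivative
\[
\frac{d}{dt}\Bigl[v(t)\exp\Bigl(-\int_0^t\beta(r)\,dr\Bigr)\Bigr]\leq \beta(t)\alpha(t)\exp\Bigl(-\int_0^t\beta(r)\,dr\Bigr).
\]
Integrating from $0$ to $t$ and using $v(0)=0$, then invoking the monotonicity of $\alpha$ (so that $\alpha(s)\leq \alpha(t)$ for $0\leq s\leq t$) to pull $\alpha(t)$ out of the integral, yields
\[
v(t)\exp\Bigl(-\int_0^t\beta(r)\,dr\Bigr)\leq \alpha(t)\int_0^t \beta(s)\exp\Bigl(-\int_0^s\beta(r)\,dr\Bigr)\,ds=\alpha(t)\Bigl[1-\exp\Bigl(-\int_0^t\beta(r)\,dr\Bigr)\Bigr].
\]

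Rearranging gives $v(t)\leq \alpha(t)\bigl[\exp(\int_0^t\beta(s)\,ds)-1\bigr]$, and inserting this into the original bound $u(t)\leq \alpha(t)+v(t)$ produces the desired conclusion
\[
u(t)\leq \alpha(t)\exp\Bigl(\int_0^t\beta(s)\,ds\Bigr).
\]
There is essentially no obstacle here; the only subtlety is the restriction to $t\geq 0$ (the statement's ``$t\in\mathbb{R}$'' should be read this way since the integral runs from $0$ to $t$) and the use of the monotonicity of $\alpha$ at exactly one place, namely to bound $\alpha(s)$ by $\alpha(t)$ under the integral. The proof is a few lines and follows a completely standard template, so I would present it in this streamlined form rather than passing through the iterated-integral series expansion.
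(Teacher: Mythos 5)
Your proof is correct: the integrating-factor argument is the standard derivation of the integral form of Gronwall's inequality, and every step checks out (the positivity of $\beta$ is used to pass from $u\leq\alpha+v$ to the differential inequality for $v$, and the monotonicity of $\alpha$ is used exactly once, to replace $\alpha(s)$ by $\alpha(t)$ under the integral before evaluating $\int_0^t\beta(s)\exp(-\int_0^s\beta)\,ds=1-\exp(-\int_0^t\beta)$). The paper itself states this lemma as a known fact and supplies no proof at all, so there is nothing to compare against; your observation that ``$t\in\mathbb{R}$'' must be read as $t\geq 0$ is a fair correction of the statement rather than a gap in your argument.
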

To apply Gronnwall's inequality it is observed that
$$\frac{\partial }{\partial t}\Big(\max\limits_{1\le s, t\le n+p}  \mathbb{E}|F_{ab}(\bbX^t,z)|^{2q}\Big)\le \max\limits_{1\le s, t\le n+p} \frac{\partial}{\partial t}  \mathbb{E}|F_{ab}(\bbX^t,z)|^{2q}.$$
From (\ref{1124.14}) and (\ref{b7}) we see that
$$\frac{\partial \mathbb{E}|F_{ab}(\bbX^t,z)|^{2q}}{\partial t}=O((n^{24\delta}\Psi)^{2q}+\|\mathbb{E}\bbL(\bbX^{t},z)\|_{\infty}),$$
if $F$ in (\ref{1124.14}) is taken as $|F_{ab}(\cdot,z)|^{2q}$. Gronnwall's inequality and (\ref{a31}) imply that
$$
\frac{\partial \mathbb{E}|F_{ab}(\bbX^t,z)|^{2q}}{\partial t}\leq M(n^{24\delta}\Psi)^{2q}+M\Big(\max\limits_{1\le s, t\le n+p}  \mathbb{E}|F_{ab}(\bbX^0,z)|^{2q}\Big)\leq M(n^{24\delta}\Psi)^{2q}
$$
This, together with Lemma \ref{1124.14*} and (\ref{a31}), implies that Lemma \ref{1216-3} holds. Similarly for future use we would point out that if $n^{24\delta}\Psi$ in (\ref{1119.13}) is replaced by $n^{\delta}\Psi^2$ and (\ref{a31}) is strengthened to \begin{equation}\label{a32}\mathbb{E}\Big(|F_{ab}(\bbX^0,z)|^{2q}\Big)\leq (n^\delta\Psi^2)^{2q}\end{equation}
  then
\begin{equation}\label{a33}
\mathbb{E}\Big(|F_{ab}(\bbX,z)|^{2q}\Big)\leq (n^\delta\Psi^2)^{2q},
\end{equation}
if the real part of $z$ is outside the support.

What remains is to prove Lemma \ref{1119-5} and we below consider the case $u=1$ only ($u=0$ is similar).  We first develop a crude bound below so that we may use property $B_{k-1}$ in (\ref{1116.4}), which is the assumption of Lemma \ref{1119-5}.
\begin{lem}\label{1116-3}
Suppose that property $B_{k-1}$ holds. Then for any unit vector $\bbv$ and $\bbw$
$$\langle\bbv,(\bbG(z)-\bbF(z))\bbw\rangle=O_{\prec}(n^{2\delta})$$
for all $z\in S_k$.
\end{lem}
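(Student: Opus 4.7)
My plan is to prove Lemma \ref{1116-3} via a one-step induction: use property $B_{k-1}$ at a reference point $z'\in S_{k-1}$ with slightly larger imaginary part than $z$, and transport the bound down to $z$ using a resolvent identity, the Ward identity (\ref{1124.13}), and the smoothness of $\bbF$.

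Fix $z=E+i\eta\in S_k$. If $\eta\ge n^{-\delta(k-1)}$, then $z\in S_{k-1}$ and the estimate is immediate: for unit vectors $\bbv,\bbw$, choose orthogonal matrices $\bbV_1,\bbV_2$ having $\bbv,\bbw$ as their first rows, so that $\langle\bbv,(\bbG-\bbF)(z)\bbw\rangle$ is the $(1,1)$-entry of $\bbV_1(\bbG-\bbF)\bbV_2^T$; this entry is $\prec 1$ by $B_{k-1}$ and hence certainly $\prec n^{2\delta}$. Otherwise, set $z'=E+in^{\delta}\eta\in S_{k-1}$ and decompose
$$\langle\bbv,(\bbG-\bbF)(z)\bbw\rangle=\langle\bbv,(\bbG-\bbF)(z')\bbw\rangle+\langle\bbv,(\bbG(z)-\bbG(z'))\bbw\rangle-\langle\bbv,(\bbF(z)-\bbF(z'))\bbw\rangle.$$
The first summand is $\prec 1$ by the argument above.

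For the $\bbG$-increment, note that $\partial_z\bbH=-\bbP$ with $\bbP$ the projector onto the first $m$ coordinates, so the resolvent identity gives $\bbG(z)-\bbG(z')=(z-z')\bbG(z')\bbP\bbG(z)$. Because $\bbH$ is symmetric, so is $\bbG$, and a Cauchy--Schwarz application using (\ref{1124.13}) yields
$$|\langle\bbv,(\bbG(z)-\bbG(z'))\bbw\rangle|\le|z-z'|\sqrt{\Im\bbG_{\bbv\bbv}(z')/(n^{\delta}\eta)}\sqrt{\Im\bbG_{\bbw\bbw}(z)/\eta}.$$
By the first summand bound, $\Im\bbG_{\bbv\bbv}(z')\le|\bbG_{\bbv\bbv}(z')|\prec 1$. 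To control $\Im\bbG_{\bbw\bbw}(z)$, I use the decomposition $\bbG=\bbA_3+\bbA_2\bbG_m\bbA_2^T$ from (\ref{1219.1}), in which $\bbA_2,\bbA_3$ are real and $z$-independent; writing $\bbu=\bbA_2^T\bbw$ (real), one has $\Im\bbG_{\bbw\bbw}=\Im(\bbG_m)_{\bbu\bbu}$, and the standard monotonicity $\eta\mapsto\eta\Im(\bbG_m)_{\bbu\bbu}(E+i\eta)$ for the Hermitian resolvent $\bbG_m$ gives $\Im\bbG_{\bbw\bbw}(z)\le n^{\delta}\Im\bbG_{\bbw\bbw}(z')\prec n^{\delta}$. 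Plugging in $|z-z'|\le n^{\delta}\eta$, the $\bbG$-increment is $\prec n^{\delta}\eta\cdot n^{-\delta/2}\eta^{-1/2}\cdot n^{\delta/2}\eta^{-1/2}\prec n^{\delta}$.

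For the $\bbF$-increment, $\bbF(z)$ depends on $z$ only through $z$ itself and through $m(z)$; since $|m(z)|\le M_1$ on $E_+$ by (\ref{a18}), Cauchy's integral formula gives $|m'(\zeta)|\le C/\Im\zeta$, and integrating along the vertical segment from $z$ to $z'$ yields $|m(z)-m(z')|\le C\log(\eta'/\eta)=C\delta\log n\prec 1$. Combined with the separation (\ref{a21}) and the a priori bounds on the random ingredients of $\bbF$, this gives $\|\bbF(z)-\bbF(z')\|\prec 1$ and hence a $\prec 1$ contribution. Summing the three pieces yields $|\langle\bbv,(\bbG-\bbF)(z)\bbw\rangle|\prec 1+n^{\delta}+1\prec n^{2\delta}$, as required. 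The main subtlety is the monotonicity step: $\bbG$ is not itself a Hermitian resolvent, so the classical ``$\eta\Im G_{\bbu\bbu}$ is monotonic'' inequality does not apply directly, but the block decomposition (\ref{1219.1}) lets us absorb the $z$-independent random factors into the test vector and reduce to the standard statement for $\bbG_m$.
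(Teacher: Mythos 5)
Your proof is correct, but it takes a genuinely different route from the paper's. The paper first establishes an auxiliary dyadic-decomposition estimate (its Lemma \ref{1116.1}): using the spectral representation $\bbG-\bbF=\bbA_2\bbG_m\bbA_2^T+(\bbA_3-\bbF)$ it splits the eigenvalues $\lambda_k$ into shells $\eta_{l-1}\le|\lambda_k-E|<\eta_l$ and bounds each shell by $\Im\bbG_{\bbx\bbx}(E+i\eta_l)$, arriving at $\langle\bbx,(\bbG-\bbF)\bby\rangle\prec n^{2\delta}\sum_{l=1}^{L}\bigl(\Im\bbG_{\bbx\bbx}(E+i\eta_l)+\Im\bbG_{\bby\bby}(E+i\eta_l)\bigr)+1$; it then feeds in $\Im\bbG_{\bbx\bbx}(E+i\eta_l)\prec1$ from $B_{k-1}$ at all $L\le\delta^{-1}+1$ higher scales. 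You instead use only the single reference point $z'=E+i\eta_1$, transporting the bound down via the resolvent identity $\bbG(z)-\bbG(z')=(z-z')\bbG(z')\bbP\bbG(z)$, Cauchy--Schwarz with the Ward identity (\ref{1124.13}), and the monotonicity of $\eta\mapsto\eta\Im(\bbG_m)_{\bbu\bbu}(E+i\eta)$. Both arguments rest on the same two structural facts — that $\bbA_2,\bbA_3$ are real and $z$-independent (so the non-Hermitian linearization reduces to the Hermitian resolvent $\bbG_m$), and that $\eta\Im\bbG$ is nondecreasing — and both use (\ref{0304.5}) to dispose of $\bbF$ (your Cauchy-integral estimate for $m(z)-m(z')$ is unnecessary, since $\|\bbF(z)\|\prec1$ uniformly already suffices by the triangle inequality). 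Your version is slightly more economical and in fact yields the marginally sharper bound $O_\prec(n^{\delta})$; the paper's version isolates Lemma \ref{1116.1} as a standalone tool in the Knowles--Yin style. One should only note that your argument, like the paper's, implicitly requires $z'=E+i\eta_1$ to belong to the net $S$, which the paper arranges by construction of $S$.
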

\begin{proof}
Recall the definition of $\eta_l$ in (\ref{a20}). Note that $z_l=E+i\eta_l\in S_{k}$ for l=1,2,...,L when $z=E+i\eta\in S_{k-1}$. Hence (\ref{1116.4}) ensures that
$$\Im G_{\bbv\bbv}(E+i\eta_l)\prec |\bbv|^2+\Im \langle\bbv,\Pi(E+i\eta_l)\bbv\rangle\prec |\bbv|^2,$$
where the last $\prec$ follows from (\ref{0304.5}). We conclude the proof by Lemma \ref{1116.1} below.
\end{proof}
\begin{lem}\label{1116.1}
For any $z\in S$ and $\bbx$, $\bby\in \mathbb{R}^{p+n}$, we have
$$\langle\bbx,(\bbG(z)-\bbF(z))\bby\rangle\prec n^{2\delta}\sum_{l=1}^{L(\eta)}(\Im G_{\bbx\bbx}(E+i\eta_l)+\Im G_{\bby\bby}(E+i\eta_l))+|\bbx||\bby|.$$
\end{lem}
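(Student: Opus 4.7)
My plan is to establish the bound by a dyadic telescoping along the increasing sequence of imaginary parts defined in (\ref{a20}). Writing $z_l = E + i\eta_l$ for $l = 0,1,\ldots,L$, with $z_0 = z$ and $\eta_L = 1$, I decompose
\begin{equation*}
\bbG(z) - \bbF(z) = \sum_{l=1}^{L}\bigl[\bbG(z_{l-1}) - \bbG(z_l)\bigr] + \bigl[\bbG(z_L) - \bbF(z_L)\bigr] + \bigl[\bbF(z_L) - \bbF(z)\bigr].
\end{equation*}
At the reference scale $z_L$ one has $\|\bbG(z_L)\| \le \eta_L^{-1} = 1$ and, from (\ref{0304.5}), $\|\bbF(z_L)\|,\|\bbF(z)\| \prec 1$, so the two non-telescoping brackets contribute at most $O_{\prec}(|\bbx||\bby|)$ when paired with $\bbx$ and $\bby$.

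For the telescoping sum the crucial observation is that $\bbH(z)$ in (\ref{1205.1}) depends on $z$ only through the $-z\bbI_m$ block in its upper-left corner, hence $\bbH(z_{l-1}) - \bbH(z_l) = -(z_{l-1}-z_l)\bbE_m$, where $\bbE_m$ denotes the orthogonal projection onto the first $m$ coordinates. The resolvent identity then reads
\begin{equation*}
\bbG(z_{l-1}) - \bbG(z_l) = (z_{l-1}-z_l)\,\bbG(z_{l-1})\,\bbE_m\,\bbG(z_l).
\end{equation*}
Pairing with $\bbx, \bby$, inserting $\bbE_m = \bbE_m^2$, applying Cauchy--Schwarz, and using the Ward-type identity (\ref{1124.13}) in the form $\|\bbE_m\bbG(z)\bbv\|^2 = \Im G_{\bbv\bbv}(z)/\eta$ (together with $\bbG(z)^* = \bbG(\bar z)$) yields
\begin{equation*}
\bigl|\langle\bbx,(\bbG(z_{l-1})-\bbG(z_l))\bby\rangle\bigr| \le \sqrt{\eta_l/\eta_{l-1}}\,\sqrt{\Im G_{\bbx\bbx}(z_{l-1})\,\Im G_{\bby\bby}(z_l)}.
\end{equation*}

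For $1 \le l \le L-1$ one has $\eta_l/\eta_{l-1} = n^{\delta}$, so the prefactor is $n^{\delta/2}$. AM--GM combined with the standard monotonicity that $\eta\mapsto \eta\,\Im G_{\bbv\bbv}(E+i\eta)$ is nondecreasing (which converts $\Im G_{\bbv\bbv}(z_{l-1})$ into $n^{\delta}\Im G_{\bbv\bbv}(z_l)$) bounds each term by $n^{3\delta/2}[\Im G_{\bbx\bbx}(z_l)+\Im G_{\bby\bby}(z_l)]$. The monotonicity itself is inherited from the underlying self-adjoint resolvent $\bbG_m$ through the representation $\bbG = \bbA_3 + \bbA_2\bbG_m\bbA_2^T$ with $\bbA_2,\bbA_3$ real and $z$-independent. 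Summing over $l$ produces the required $n^{2\delta}\sum_l(\cdots)$ contribution.

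The step I expect to be the main obstacle is the terminal jump $l = L$: since $\eta_L = 1$ is defined separately from the dyadic grid while $\eta_{L-1} < n^{-\delta}$, the ratio $\eta_L/\eta_{L-1}$ is not of the clean size $n^{\delta}$. The definition of $L$ still guarantees $\eta_{L-1} \ge n^{-3\delta}$, so the same Cauchy--Schwarz incurs only a polynomial-in-$n^{\delta}$ loss that is absorbed by $\prec$; alternatively one can subdivide $[\eta_{L-1},1]$ into $O(\delta^{-1})$ further dyadic pieces of ratio $n^{\delta}$, each handled by the identical argument, without altering the stated bound.
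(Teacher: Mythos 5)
Your argument is correct, but it is not the route the paper takes. The paper works from the spectral representation (\ref{1219.1}), reduces the left-hand side via AM--GM to $\sum_k\langle\bbx,\bbA_2\bbv_k\rangle^2/|\lambda_k-z|$ plus the $|\bbx||\bby|$ term coming from $\bbA_3$ and $\bbF$, and then partitions the eigenvalue indices into dyadic shells $\mathcal{C}_l=\{k:\eta_{l-1}\le|\lambda_k-E|<\eta_l\}$, bounding each shell by $\Im G_{\bbx\bbx}(E+i\eta_{l-1})$ (this is the Knowles--Yin Lemma 6.12 scheme, with an extra outermost shell controlled by $\|\bbA\|\prec 1$). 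You instead telescope $\bbG$ along the grid $\eta_0<\dots<\eta_L$ using the resolvent identity for the linearization (\ref{1205.1}), which is legitimate because only the $(1,1)$ block of $\bbH$ carries the $z$-dependence, and you then invoke Cauchy--Schwarz together with the Ward identity (\ref{1124.13}); the terminal scale $\eta_L=1$ is anchored by the crude norm bounds. Both proofs hinge on the same monotonicity of $\eta\mapsto\eta\,\Im G_{\bbv\bbv}(E+i\eta)$ and produce the same sum over scales; yours trades the explicit eigenvalue-shell bookkeeping for the algebraic structure of $\bbH$, which is arguably cleaner here since the Ward identity for the linearized Green function is already recorded in (\ref{1124.13}). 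Two small points: the bound $\|\bbG(z_L)\|\le\eta_L^{-1}$ is not a deterministic resolvent bound for $\bbH^{-1}$ --- you should cite (\ref{1219.2}) to get $\|\bbG(z_L)\|\prec 1$, which suffices; and the terminal jump $l=L$ indeed costs a fixed extra power of $n^{\delta}$ beyond $n^{2\delta}$ (a factor $n^{\delta}$ is \emph{not} absorbed by $\prec$), though the paper's own treatment of $l=L$ has the identical blemish and the constant $2\delta$ in the exponent is immaterial for the application in Lemma \ref{1116-3}, so your explicit patch by further subdivision is adequate.
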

\begin{proof} The proof of this lemma follows that of Lemma 6.12 in \cite{KY14} closely.
It follows from (\ref{1219.1}) and (\ref{0304.5}) that
$$\langle\bbx,(\bbG(z)-\bbF(z))\bby\rangle\prec \sum_{k=1}^{m}\frac{\langle\bbx,A_2\bbv_k\rangle^2}{|\lambda_k-z|}+\sum_{k=1}^m\frac{\langle\bby,A_2\bbv_k\rangle^2}{|\lambda_k-z|}+|\bbx||\bby|.$$
We evaluate the first term below and the second term can be handled similarly. We introduce the indices subsets
$$\mathcal{C}_l= \left\{k: \eta_{l-1}\le |\lambda_k-E|<\eta_l\right\},\ \ (l=0,1,...,L+1),$$
where $\eta_{-1}=0$ and $\eta_{L+1}=\infty$ so that we can rewrite the first term as follows.
$$\sum_{k=1}\frac{\langle\bbx,A_2\bbv_k\rangle^2}{|\lambda_k-z|}=\sum_{l=0}^{L+1}\sum_{k\in \mathcal{C}_l}\frac{\langle\bbx,A_2\bbv_k\rangle^2}{|\lambda_k-z|}.$$
Consider the inner sum for $l\in \left \{1,2,...,L\right\}$,
\begin{eqnarray}\label{1116.2}
&&\sum_{k\in U_l}\frac{\langle\bbx,A_2\bbv_k\rangle^2}{|\lambda_k-z|}\le\sum_{k\in \mathcal{C}_l}\frac{\langle\bbx,A_2\bbv_k\rangle^2\eta_l}{(\lambda_k-E)^2}\le2\sum_{k\in \mathcal{C}_l}\frac{\langle\bbx,A_2\bbv_k\rangle^2\eta_l}{(\lambda_k-E)^2+\eta_{l-1}^2}
\non &&\le2\frac{\eta_l}{\eta_{l-1}}\Im G_{\bbx\bbx}(E+i\eta_{l-1})\le 2n^{\delta}\Im G_{\bbx\bbx}(E+i\eta_{l-1}).
\end{eqnarray}
Combining with the fact that $y\Im G_{\bbx\bbx}(E+iy)$ is nondecreasing function of y, we have
$$\sum_{k\in U_l}\frac{\langle\bbx,A_2\bbv_k\rangle^2}{|\lambda_k-z|}\le 2n^{2\delta}\Im G_{\bbx\bbx}(E+i\eta_{l\vee 1}).$$

Next, we consider the cases l=0 and l=L+1.
$$\sum_{k\in \mathcal{C}_0}\frac{\langle\bbx,A_2\bbv_k\rangle^2}{|\lambda_k-z|}\le2\sum_{k\in \mathcal{C}_0}\frac{\langle\bbx,A_2\bbv_k\rangle^2\eta}{(\lambda_k-E)^2+\eta^2}\le 2\Im G_{\bbx\bbx}(E+i\eta)\le 2n^{\delta}\Im G_{\bbx\bbx}(E+i\eta_1).$$
$$\sum_{k\in \mathcal{C}_{L+1}}\frac{\langle\bbx,A_2\bbv_k\rangle^2}{|\lambda_k-z|}\le2\sum_{k\in \mathcal{C}_{l+1}}\frac{\langle\bbx,A_2\bbv_k\rangle^2|\lambda_k-E|\eta_L}{(\lambda_k-E)^2+\eta^2_L}\prec \sum_{k\in \mathcal{C}_{L+1}}\frac{\langle\bbx,A_2\bbv_k\rangle^2\eta_L}{(\lambda_k-E)^2+\eta^2_L}\le \Im G_{\bbx\bbx}(E+i\eta_L),$$
where we also use (\ref{b5}).
\end{proof}

It is observed that Lemma \ref{1116-3} holds for the interpolation random matrix $\bbX^t$ as well because from (\ref{a4}) one can see that the entries of $\bbX^t$ are independent random variables with mean zero, variance one and finite moment.
Recall the definitions of $\bbJ_i,i=1,2$ in (\ref{a5}). It follows that
\begin{eqnarray}\label{1116.8}
\|\bbJ_1(\bbG^t(z)-\bbF(z))\bbJ_2^T\|_{\infty}\prec n^{2\delta},\quad \text{for z in}\  S_k.
\end{eqnarray}

Below we further generalize it so that (\ref{1116.8}) still holds even if any entry $X_{i\mu}^t$ of $\bbG^t(z)$ is replaced by any other random variable of size not bigger than $n^{-1/2}$. From (\ref{a7}) write
$$
\bbX_{(i\mu)}^{t, \lambda_1}-\bbX_{(i\mu)}^{t, \lambda_2}=(\lambda_1-\lambda_2)\bbe_i\bbe_\mu^T.
$$
This, together with (\ref{1205.1}), yields that
\begin{equation}\label{a12}
\bbH(\bbX_{(i\mu)}^{t, \lambda_1})-\bbH(\bbX_{(i\mu)}^{t, \lambda_2})=\bold\Delta_{(i\mu)}^{\lambda_1-\lambda_2},
\end{equation}
where $\bbH(\bbX_{(i\mu)}^{t, \lambda_1})$ is obtained from $\bbH(\bbX)$ in (\ref{1205.1}) with $\bbX$ replaced by $\bbX_{(i\mu)}^{t, \lambda}$
and
\begin{equation}\label{a6}
\bold\Delta_{(i\mu)}^{\lambda}
=\lambda \Big( \bbe_{\mu+p}\bbe_{i}^T \bold\Delta+ \bold\Delta^T \bbe_{i} \bbe^T_{\mu+p} \Big),\quad
\bold\Delta=\left(
  \begin{array}{ccc}
    \bbU^T_1\bbD^{-1/2}& \bbU^T_2 & 0\\
  \end{array}
\right),
\end{equation}
where and in the following $\bbe_{\mu+p}$ is always $(n+p)\times 1$ and $\bbe_i$ is $p\times 1$.
Applying the formula $\bbA^{-1}-\bbB^{-1}=\bbA^{-1}(\bbB-\bbA)\bbB^{-1}$ repeatedly we further obtain the following resolvent formula for any $H\in \mathbb{N}$ ,
\begin{eqnarray}\label{1123.2}
\bbG_{(i\mu)}^{t, \lambda_1}=\bbG_{(i\mu)}^{t, \lambda_2}+\sum_{h=1}^H (-1)^{h}\bbG_{(i\mu)}^{t, \lambda_2}\Big(\bold\Delta_{(i\mu)}^{\lambda_1-\lambda_2}\bbG_{(i\mu)}^{t, \lambda_2}\Big)^h+(-1)^{H+1}\bbG_{(i\mu)}^{t, \lambda_1}\Big(\bold\Delta_{(i\mu)}^{\lambda_1-\lambda_2}\bbG_{(i\mu)}^{t, \lambda_2}\Big)^{H+1},
\end{eqnarray}
recalling the definition of $\bbG_{(i\mu)}^{t, \lambda_1}$ below (\ref{a7}). Here and below we drop the variable $z$ when there is no confusion but one should keep in mind that $z\in S_k$.

\begin{lem}\label{0304-1}
Suppose that $\lambda$ is a random variable and satisfies $|\lambda|\prec n^{-1/2}$. Then
\begin{eqnarray}\label{1116.7}
\|\bbJ_1(\bbG_{(i\mu)}^{t, \lambda}-\bbF)\bbJ_2^T\|_{\infty}\prec n^{2\delta}.
\end{eqnarray}
\end{lem}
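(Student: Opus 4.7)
The plan is to use the resolvent expansion \eqref{1123.2} with the choice $\lambda_2=X_{i\mu}^t$, so that $\bbG_{(i\mu)}^{t,\lambda_2}=\bbG^t$, and $\lambda_1=\lambda$. Then the perturbation parameter is $\lambda-X_{i\mu}^t$. Since $(m+n-p)^{1/2}\bbX$ satisfies Condition~\ref{cond1}, all finite moments of $X_{i\mu}^t$ are bounded uniformly in $t$, hence $|X_{i\mu}^t|\prec n^{-1/2}$. Combined with the hypothesis $|\lambda|\prec n^{-1/2}$, this gives $|\lambda-X_{i\mu}^t|\prec n^{-1/2}$. Consequently, by the structure \eqref{a6}, the rank-two perturbation matrix $\bold\Delta_{(i\mu)}^{\lambda-X_{i\mu}^t}$ has operator norm $\prec n^{-1/2}$, because the deterministic factors $\bbD^{-1/2}\bbU_1$ and $\bbU_2$ are bounded on the event $S_\xi$ via \eqref{a24}.

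The main work is to estimate each term of \eqref{1123.2} in the $(a,b)$-entry of $\bbJ_1(\cdot)\bbJ_2^T$. Because $\bold\Delta_{(i\mu)}^{\lambda-X_{i\mu}^t}$ is a rank-two matrix of the form $(\lambda-X_{i\mu}^t)\bigl(\bbe_{\mu+p}\bbe_i^T\bold\Delta+\bold\Delta^T\bbe_i\bbe_{\mu+p}^T\bigr)$, every insertion of this factor into a product with $\bbG^t$ replaces the product by a sum of terms of the form
\[
(\lambda-X_{i\mu}^t)\,(\bbJ_1'\bbG^t\bbe_{\mu+p})(\bbe_i^T\bold\Delta\bbG^t\bbJ_2'^{T})
\quad\text{or its transpose,}
\]
where $\bbJ_1',\bbJ_2'\in\mathcal{L}$ (the class appearing in \eqref{a5}, closed under multiplication by $\bold\Delta$ on the event $S_\xi$). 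By \eqref{1116.8} and the uniform bound $\|\bbF\|\prec 1$ (which is \eqref{0304.5}), each such inner product with arbitrary deterministic bounded vectors is $O_\prec(n^{2\delta})$. Hence the $h$-th term in \eqref{1123.2}, measured entry-wise after sandwiching by $\bbJ_1,\bbJ_2$, is $O_\prec\bigl(n^{2\delta}\cdot(n^{-1/2}\cdot n^{2\delta})^h\bigr)=O_\prec(n^{2\delta}\cdot n^{(2\delta-1/2)h})$. Since $\delta$ is small, summing the geometric series from $h=0$ to $h=H$ yields an overall bound of $O_\prec(n^{2\delta})$.

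For the remainder term in \eqref{1123.2}, the naive norm bound \eqref{1119.12} gives $\|\bbG_{(i\mu)}^{t,\lambda}\|\le Cn^{10}\eta^{-1}\le Cn^{10+\delta k}$ on the truncation event, and combined with $H+1$ factors of $\bold\Delta_{(i\mu)}^{\lambda-X_{i\mu}^t}$ of size $O_\prec(n^{-1/2})$ it is $O_\prec(n^{10+\delta k}\cdot n^{-(H+1)/2}\cdot n^{2\delta(H+1)})$. Choosing $H$ large enough relative to $D$ (e.g.\ $H=40D$) makes this remainder $\prec n^{-D}$ for any prescribed $D$, which is absorbed into the stochastic domination. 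Combining the two bounds yields \eqref{1116.7}.

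The main obstacle is keeping track of the interaction between the rank-two projector structure of $\bold\Delta_{(i\mu)}^{\lambda-X_{i\mu}^t}$ and the entries of $\bbG^t$ that actually get picked out: one has to verify that every entry produced by the iterated products still falls within the class for which \eqref{1116.8} applies, i.e.\ that the vectors $\bold\Delta\bbG^t\bbe_{\mu+p}$, $\bbJ_1'\bbG^t\bold\Delta^T\bbe_i$, etc., can be written in a form $\bbJ_1''\bbG^t\bbe_c$ with $\bbJ_1''\in\mathcal{L}$ plus a bounded deterministic piece (absorbed into the unit vector). Once this closure property is confirmed, the geometric control above finishes the proof.
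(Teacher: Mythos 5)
Your proposal is correct and follows essentially the same route as the paper: apply the resolvent expansion (\ref{1123.2}) with $\lambda_2=X_{i\mu}^t$ so the expansion is around $\bbG^t$, bound each term via (\ref{1116.8}) together with $\|\bold\Delta_{(i\mu)}^{\lambda-X_{i\mu}^t}\|\prec n^{-1/2}$, and kill the remainder with the crude a priori bound on $\bbG_{(i\mu)}^{t,\lambda}$ (the paper uses $O_\prec(n)$ from (\ref{1219.2}) with a fixed $H=11$, which suffices since the definition of $\prec$ already supplies arbitrary $n^{-D}$ exceptional probability, whereas you let $H$ grow with $D$ — both work). The closure property you flag at the end is immediate here because $\bold\Delta\in\mathcal{L}$ by the definition in (\ref{a5}), so every factor produced by the rank-two insertions is an entry of $\bbJ_1'\bbG^t\bbJ_2'^T$ with $\bbJ_1',\bbJ_2'\in\{1,\bold\Delta\}$, to which (\ref{1116.8}) applies directly.
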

\begin{proof}
Recall (\ref{0304.5})
\begin{equation*}\|\bbF \|\prec 1.\end{equation*}
It is easy to see that
$$\|\bold\Delta \|\le  M,$$
which implies that
\begin{eqnarray}\label{0304}
\|\bold\Delta_{(i\mu)}^{\lambda}\|\le M\lambda.
\end{eqnarray}
We next apply (\ref{1123.2}) with $\lambda_1=\lambda$, $H=11$ and $\lambda_2=X_{i\mu}^t$ so that $\bbG_{(i\mu)}^{t, \lambda_2}=\bbG^t$.
We conclude from (\ref{1116.8}) that
$$\|\bbJ_1 \bbG_{(i\mu)}^{t, \lambda_2}\|+\| \bbG_{(i\mu)}^{t, \lambda_2}\bbJ_2^T\|\prec n^{2\delta}.$$
Note that $|\lambda_1-\lambda_2|\prec n^{-1/2}$. Similar to the first inequality in (\ref{1219.2}), $\bbG_{(i\mu)}^{t, \lambda_1}$ can be bounded by the imaginary part of $z$, i.e. $\bbG_{(i\mu)}^{t, \lambda_1}=O_{\prec}(n)$.   Summarizing the above we conclude Lemma \ref{0304-1}. 
\end{proof}

In order to simplify the notations, recalling (\ref{a5}) we define
$$f_{(i\mu)}(\lambda)= |F_{ab}(\bbX_{(i\mu)}^{t, \lambda})|^{2q}=\left(F_{st}(\bbX_{(i\mu)}^{t, \lambda})\overline{F_{ab}(\bbX_{(i\mu)}^{t, \lambda})}\right)^{q},$$
where we omit some parameters. 
By Lemma \ref{0304-1} and (\ref{1123.2}) one can easily get the following Lemma.
\begin{lem}\label{1123-2}
Suppose that $\lambda$ is a random variable and satisfies $|\lambda|\prec n^{-1/2}$. Then for any fixed integer k we have
\begin{eqnarray}\label{1123.3}
|f^{(k)}_{(i\mu)}(\lambda)|\prec n^{2\delta(2q+k)},
\end{eqnarray}
where $f^{(k)}_{(i\mu)}(\lambda)$ denotes the $k$th derivative of $f_{(i\mu)}(\lambda)$ with respect to $\lambda$.
\end{lem}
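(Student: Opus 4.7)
\textbf{Proof plan for Lemma \ref{1123-2}.}

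The plan is to reduce the bound on $f^{(k)}_{(i\mu)}(\lambda)$ to an entrywise bound on derivatives of the resolvent $\bbG^{t,\lambda}_{(i\mu)}$, and then exploit the rank-$2$ structure of $\bold\Delta_{(i\mu)}^{1}$ together with Lemma \ref{0304-1}. First, by Leibniz's rule
\[
f^{(k)}_{(i\mu)}(\lambda)=\sum_{k_{1}+\cdots+k_{2q}=k}\binom{k}{k_{1},\ldots,k_{2q}}\prod_{j=1}^{2q}\partial_{\lambda}^{k_{j}}\!\bigl[F_{ab}(\bbX^{t,\lambda}_{(i\mu)})\bigr]^{\sharp_{j}},
\]
where $\sharp_{j}\in\{1,*\}$ indicates whether the $j$-th factor is $F_{ab}$ or its complex conjugate. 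The sum has finitely many terms depending only on $k$ and $q$, so it suffices to bound each factor by $n^{2\delta(k_{j}+1)}$; multiplying yields the desired $n^{2\delta(2q+k)}$.

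Next, I differentiate $F_{ab}(\bbX^{t,\lambda}_{(i\mu)})$ in $\lambda$. Using (\ref{a12}) we have $\bbH(\bbX^{t,\lambda}_{(i\mu)})=\bbH(\bbX^{t,0}_{(i\mu)})+\lambda\bold\Delta_{(i\mu)}^{1}$, which depends linearly on $\lambda$. Therefore the standard resolvent differentiation formula gives
\[
\frac{d^{k_{j}}}{d\lambda^{k_{j}}}\bbG^{t,\lambda}_{(i\mu)}=(-1)^{k_{j}}k_{j}!\,\bbG^{t,\lambda}_{(i\mu)}\bigl(\bold\Delta_{(i\mu)}^{1}\bbG^{t,\lambda}_{(i\mu)}\bigr)^{k_{j}},\quad k_{j}\ge 1.
\]
The contribution of the cutoff $\mathcal{T}_{n}(\bbX^{t,\lambda}_{(i\mu)})$ is handled separately: by (\ref{b2}) and (\ref{b3}) applied to $\bbX^{t,\lambda}_{(i\mu)}$, and since $|\lambda|\prec n^{-1/2}$ causes only a polynomial perturbation, $\mathcal{T}_{n}(\bbX^{t,\lambda}_{(i\mu)})=1$ with high probability in a neighbourhood of the current $\lambda$, so all its $\lambda$-derivatives vanish there. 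The remaining (negligible) event can be absorbed into the $\prec$ notation, using the crude polynomial bounds in (\ref{1119.12}).

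The heart of the argument is bounding $(\bbJ_{1}\bbG(\bold\Delta_{(i\mu)}^{1}\bbG)^{k_{j}}\bbJ_{2}^{T})_{ab}$ entrywise. Inserting the rank-$2$ decomposition $\bold\Delta_{(i\mu)}^{1}=\bbe_{\mu+p}\bbe_{i}^{T}\bold\Delta+\bold\Delta^{T}\bbe_{i}\bbe_{\mu+p}^{T}$ from (\ref{a6}) and expanding the product yields $2^{k_{j}}$ terms, each being a telescoping product of $k_{j}+1$ scalars, every one of which is an entry of some $\bbJ\bbG\bbJ'$ with $\bbJ,\bbJ'\in\mathcal{L}=\{1,\bold\Delta,\bbV\}$ (after absorbing $\bbe_{\mu+p}$, $\bbe_{i}$ into the index selection). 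By Lemma \ref{0304-1} together with $\|\bbF\|\prec 1$ from (\ref{0304.5}), every such scalar is $\prec n^{2\delta}$. Hence $|F_{ab}^{(k_{j})}(\bbX^{t,\lambda}_{(i\mu)})|\prec n^{2\delta(k_{j}+1)}$ for every $k_{j}\ge 0$ (the case $k_{j}=0$ is exactly Lemma \ref{0304-1}).

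Combining the $2q$ factors gives
\[
|f^{(k)}_{(i\mu)}(\lambda)|\prec\sum_{k_{1}+\cdots+k_{2q}=k}\prod_{j=1}^{2q}n^{2\delta(k_{j}+1)}=n^{2\delta(2q+k)},
\]
as claimed. The one delicate step I expect to need care with is the bookkeeping of the rank-$2$ expansion, in particular verifying that every scalar produced really has the form $\bbe^{T}_{\alpha}\bbJ\bbG\bbJ'\bbe_{\beta}$ with $\bbJ,\bbJ'\in\mathcal{L}$ so that Lemma \ref{0304-1} applies; this is a purely combinatorial check using the explicit form of $\bold\Delta$ in (\ref{a6}).
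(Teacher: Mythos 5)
Your proposal is correct and is essentially the paper's own argument: the paper's proof of this lemma is a one-line appeal to Lemma \ref{0304-1} and the resolvent expansion (\ref{1123.2}), which is precisely the Leibniz-plus-rank-two-expansion computation you carry out in detail, and your bookkeeping (each of the $2^{k_j}$ terms is a product of $k_j+1$ scalars of the form $(\bbJ\bbG\bbJ')_{\alpha\beta}$ with $\bbJ,\bbJ'\in\{1,\bold\Delta,\bold\Delta^T\}$, each $\prec n^{2\delta}$ by Lemma \ref{0304-1} and $\|\bbF\|\prec 1$) is sound. One small omission: $F_{ab}(\bbX^{t,\lambda}_{(i\mu)})$ contains the subtracted term $(\bbJ_1\bbF\bbJ_2^T)_{ab}$, and $\bbF$ is \emph{not} constant in $\lambda$ (it depends on $\bbX$ through $\bbX\bbX^T\bbU_2^T\bold\Gamma$, $\bold\Gamma=(\bbU_2\bbX\bbX^T\bbU_2^T)^{-1}$, etc.), so your resolvent differentiation formula does not account for all of $\partial_\lambda^{k_j}F_{ab}$. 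This does not break the argument: by the ``atoms'' computation (\ref{a9})--(\ref{a10}) each $\lambda$-derivative of a block of $\bbF$ is a finite sum of products of operator-norm-bounded matrices with $\bbe_i\bbe_\mu^T$ or $\bbe_\mu\bbe_i^T$, so its $(a,b)$ entry is $O_\prec(1)\le n^{2\delta(k_j+1)}$, and the final bound is unchanged; but the step should be stated.
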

From Taylor's expansion and (\ref{1123.3}) when $|\lambda|\prec n^{-1/2}$ we have
\begin{eqnarray}\label{1123.4}
f_{(i\mu)}(\lambda)=\sum_{k=0}^{8q}\frac{\lambda^k}{k!}f^{(k)}_{(i\mu)}(0)+O_{\prec}(\Psi^{2q}).
\end{eqnarray}
It follows from Lemma \ref{1123-2} and (\ref{b6}) that
\begin{eqnarray}\label{1123.5}
&&\mathbb{E}|F_{ab}(\bbX_{(i\mu)}^{t,X_{i\mu}^1})|^{2q}-\mathbb{E}|F_{ab}(\bbX_{(i\mu)}^{t,0})|^{2q}=\mathbb{E}f_{(i\mu)}(X_{i\mu}^1)-\mathbb{E}f_{(i\mu)}(0)\\
&&=\frac{1}{2(m+n-p)}\mathbb{E}f^{(2)}_{(i\mu)}(0)+\sum_{k=4}^{8q}\frac{1}{k!}\mathbb{E}f^{(k)}_{(i\mu)}(0)\mathbb{E}(X_{i\mu}^1)^k+O_{\prec}(\Psi^{2q}),\nonumber
\end{eqnarray}
where we use $\mathbb{E}(X_{i\mu}^1)^k=0,k=1,3$.
 To show (\ref{1119.13}), it suffices to prove that
\begin{eqnarray}\label{1123.6}
n^{-k/2}\sum_{i=1}^p\sum_{\mu=1}^n\mathbb{E}f^{(k)}_{(i\mu)}(0)=O((n^{24\delta}\Psi)^{2q}+\|\mathbb{E}|F(\bbX^{t})|^{2q}\|_{\infty}),
\end{eqnarray}
for k=4,...,8q. At this moment we would like to point out that $\mathbb{E}|g_{ab}(\bbX_{(i\mu)}^{t,0})|^{2q}$ in (\ref{1119.13}) equals
$$\mathbb{E}|F_{ab}(\bbX_{(i\mu)}^{t,0})|^{2q}+\frac{1}{2(m+n-p)}\mathbb{E}f^{(2)}_{(i\mu)}(0).$$

We will not prove (\ref{1123.6}) directly. Instead we will prove the following claim in order to obtain a self-consistent estimation of $\bbX^{t}$. We claim that
if
\begin{eqnarray}\label{1123.7}
n^{-k/2}\sum_{i=1}^p\sum_{\mu=1}^n\mathbb{E}f^{(k)}_{(i\mu)}(X^{t}_{i\mu})=O((n^{24\delta}\Psi)^{2q}+\|\mathbb{E}|F(\bbX^{t})|^{2q}\|_{\infty}),
\end{eqnarray}
is true for k=4,...,16q, then (\ref{1123.6}) holds for k=4,...,8q.
Indeed, in order to apply (\ref{1123.7}) to prove (\ref{1123.6}) we denote $f_{(i\mu)}$ and $X^{t}_{i\mu}$ by f and X respectively for simplicity. Similar to (\ref{1123.5}), by (\ref{1123.3}) we have
\begin{eqnarray}\label{1123.8}
\mathbb{E}f^{(l)}(0)=\mathbb{E}f^{(l)}(X)-\sum_{k=1}^{16q-l}\mathbb{E}f^{(l+k)}(0)\frac{\mathbb{E}X^k}{k!}+O_{\prec}(n^{l/2-1/2-8q+40\delta q}).
\end{eqnarray}
It follows from (\ref{1123.8}) that
\begin{eqnarray*}\label{1123.9}
\mathbb{E}f^{(k)}(0)&=&\mathbb{E}f^{(k)}(X)-\sum_{k_1\ge 1\atop{k+k_1\le 16q}}\mathbb{E}f^{(k+k_1)}(0)\frac{\mathbb{E}X^{k_1}}{k_1!}+O_{\prec}(n^{k/2-1/2-8q+40\delta q})\non
&=&\mathbb{E}f^{(k)}(X)-\sum_{k_1\ge 1\atop{k+k_1\le 16q}}\mathbb{E}f^{(k+k_1)}(X)\frac{\mathbb{E}X^{k_1}}{k_1!}\non
&&+\sum_{k_1,k_2\ge 1\atop{k+k_1+k_2\le 16q}}\mathbb{E}f^{(k+k_1+k_2)}(0)\frac{\mathbb{E}X^{k_1}}{k_1!}\frac{\mathbb{E}X^{k_2}}{k_2!}+O_{\prec}(n^{k/2-1/2-8q+40\delta q})\non
&=...=&\sum_{r=0}^{16q-k}(-1)^r\sum_{k_1,k_2,...,k_r\ge 1\atop{k+\sum k_i\le 16q}}\mathbb{E}f^{(k+\sum k_i)}(X)\prod_i\frac{\mathbb{E}X^{k_i}}{k_i!}+O_{\prec}(n^{k/2-1/2-8q+40\delta q}).
\end{eqnarray*}
This, together with (\ref{a19}) and the definition of $\Psi$ in (\ref{a17}), implies (\ref{1123.6}) immediately, as claimed.

It then suffices to prove (\ref{1123.7}). Recall that \begin{equation}\label{a8}
f^{(k)}_{(i\mu)}(X^{t}_{i\mu})=\frac{\partial^k\Big(|F_{ab}(\bbX_{(i\mu)}^{t, X^{t}_{i\mu}})|^{2q}\Big)}{\partial (X_{i\mu}^t)^k},\end{equation}
where $F_{st}(\cdot)$ is given in (\ref{a5}). Since $\bbX^t=\bbX_{(i\mu)}^{t, X^{t}_{i\mu}}$ is the only matrix we focus on we below use $\bbX=(X_{i\mu})$ instead of $\bbX^t=(X_{i\mu}^t)$ to simplify notation because the entries of both of them have bounded higher moments. To prove (\ref{1123.7}) we need to study (\ref{a8}).

\subsubsection{Estimate of higher order derivatives (\ref{a8}) in (\ref{1123.7})}

We first look at the higher order derivatives of $(\bbJ_1\bbF(z)\bbJ_2^T)_{ab}$ with respect to $\bbX_{i\mu}$. Noting that $\bbF(z)$ is a $3\times 3$ block matrix we need to analyze the derivatives of $(\bbJ_1\bbF(z)\bbJ_2^T)_{ab}$ block by block. It turns out that the higher order derivatives of $(\bbJ_1\bbF(z)\bbJ_2^T)_{ab}$ are quite complicated even if we analyze them block by block. Fortunately, as will be seen, the exact expressions of the higher order derivatives of $(\bbJ_1\bbF(z)\bbJ_2^T)_{ab}$ are not important. Moreover we claim an important fact that the higher order derivatives of $\bbF(z)$ with respect to $\bbX_{i\mu}$ can be generated by some sum or products of (part of) common matrices $\bbU_1,\bbU_2,\bold\Sigma,\bbe_i\bbe^T_\mu, \bbe_\mu\bbe^T_i, \bbX,\Gamma(\bbX)$ (we call these common matrices atoms). Indeed, recalling $\Gamma(\bbX)=(U_2\bbX\bbX^TU^T_2)^{-1}$  simple calculations indicate that \begin{equation}\label{a9}
\frac{\partial \bbX\bbX^T}{\partial \bbX_{i\mu}}=\bbX\bbe_\mu\bbe^T_i+\bbe_i\bbe^T_\mu \bbX^T,\ \frac{\partial \Gamma(\bbX)}{\partial \bbX_{i\mu}}=-\Gamma(\bbX)(U_2\bbX\bbe_\mu\bbe^T_iU^T_2+U_2\bbe_i\bbe_\mu \bbX^TU^T_2)\Gamma(\bbX).
\end{equation}
It's easy to see that the first derivative of each block of $\bbF(z)$ with respect to $\bbX_{i\mu}$ can be constructed by sum or products of these atoms. Assuming that the $k$th derivative of each block of $\bbF(z)$ is constructed by these atoms we find that 
the $(k+1)$th derivative of each block of $\bbF(z)$ is also constructed by these atoms by (\ref{a9}). Based on the above fact we can describe the higher order derivatives of $(\bbJ_1\bbF(z)\bbJ_2^T)_{ab}$ easier.  By dropping $\bbe_i\bbe^T_\mu$ and $\bbe_\mu\bbe^T_i$ from the atoms we define the set
\begin{equation}\label{1202.1}
\mathcal{Q}(k)= \{\text{The matrices constructed from sum or product of (part of)} \ \  \bbU_1, \bbU_2, \bbX, \bold\Sigma,  \  \Gamma(\bbX)\}.
\end{equation}
Any $k$th order derivative of each block of $\bbF(z)$ with respect to $\bbX_{i\mu}$ belongs to some product(s) between some matrices in $\mathcal{Q}(k)$ and $\bbe_i\bbe^T_\mu$ or $\bbe_\mu\bbe^T_i$.

Lemma \ref{1121-1} and (\ref{b5}) imply that $\|\Gamma(\bbX)\|\le M$ and $\|\bbX\bbX^*\|\le M$ with high probability. Recalling (\ref{a26}), in view of the arguments above we conclude that for any $\bbQ\in \mathcal{Q}(k)$,
\begin{equation}\label{a10}\|\bbQ\|\prec 1\end{equation} and the cardinality of $\mathcal{Q}(k)$ satisfies $|\mathcal{Q}(k)| \le M(k)$, where M(k) is a constant depending on k. Moreover, for the function $\mathcal{T}_n(\bbX)$, if $\mathcal{T}_n(\bbX)$ is differentiated, then by simple and tedious calculations, from the definition of the smooth cutoff function, (\ref{b2}) and (\ref{b3}) we have
\begin{eqnarray}\label{1219.8}
\Big|D_{i \mu}^j\mathcal{T}_n(\bbX)\Big|\prec 0
\end{eqnarray}
and
\begin{eqnarray}\label{1210.1}
\Big|\mathbb{E} D_{i \mu}^j\mathcal{T}_n(\bbX)\Big|\le n^{-l}
\end{eqnarray}
 for any positive integer l and sufficient large n.
The above properties about $\mathcal{T}_n(\bbX)$ and the matrices belonging to $\mathcal{Q}(k)$ are enough for our proof below and we don't need to investigate the precise expression. 

We next look at the higher order derivatives of $(\bbJ_1\bbG(z)\bbJ_2^T)_{ab}$ with respect to $\bbX_{i\mu}$. To characterize its higher order derivative conveniently we define group $g$ of size $k$ to be the set of paired indices:
$$
 g=\{a_1b_1,a_2b_2,\cdots, a_{k+1}b_{k+1}\},
 $$
 where each of $\{a_j,b_j,j=1,\cdots,k+1\}$ equals one of four letters $s,t,i, (\mu+p)$. Here we would remind readers that the size of group $g$ is defined to be $k$ instead of (k+1) in order to simplify the argument below. Denote the size of the group $g$ by $k=k(g)$ and introduce the set $\mathfrak{G}_k=\{g:\ k(g)=k\}$ consisting of groups of size $k$.
Moreover, we require each group in $\mathfrak{G}_k$ to satisfy three conditions specified below: 
\begin{itemize}
\item[(i)] $a_1=a$ and $b_{k+1}=b$.
\item[(ii)] For $l\in [2,k+1]$ we have $a_l\in \{i, \mu+p\}$ and $b_{l-1}\in \{i,\mu+p \}$.
\item[(iii)] For $k\in [1,k]$ we have $b_{l-1}a_l\in \{i(\mu+p),(\mu+p)i\}$.
\end{itemize}
As will be seen, groups $g$ are connected with the high order derivatives of $(\bbJ_1\bbG(z)\bbJ_2^T)_{ab}$.

Moreover write $\bbF(z)=\sum\limits_{j=1}^7\bbF_j(z)$ where each $\bbF_j(z)$ corresponds to a non-zero block of $\bbF(z)$. As before, to characterize the higher order derivative of each block conveniently we define groups $g^{(j)}$ of size $k$ to be the set of paired indices:
$$
g^{(j)}=\{a_{j1}b_{j1},a_{j2}b_{j2},\cdots, a_{j(k+1)}b_{j(k+1)}\},
$$
where each $s_{jm}$ and $t_{jm}$ equals $s,t,i,\mu$.
 Moreover introduce the set $\mathfrak{G}_{jk}=\{g^{(j)}:\ k(g^{(j)})=k\}$ consisting of groups of size $k$. 
We require each group in $\mathfrak{G}_{jk}$ to satisfy conditions:
\begin{itemize}
\item[(i)] $a_{j1}=a$ and $b_{j (k+1)}=b$.
\item[(ii)] For $l\in [2,k+1]$ we have $a_{jl}\in \{i,\mu\}$ and $b_{j(l-1)}\in \{i,\mu\}$.
\item[(iii)] For $k\in [1,k]$ we have $b_{j(l-1)}a_{jl}\in \{i \mu,\mu i\}$.
\end{itemize}
As will be seen groups $g^{(j)}$ are linked to the high order derivatives of $(\bbJ_1\bbF(z)\bbJ_2^T)_{ab}$.


We below associate a random variable $B_{a,b,i,\mu}(g,g^{(1)},\cdots,g^{(7)})$ with each group $g,g^{(j)},j=1,\cdots,7$. When $k(g)=k(^{(j)})=0$ we define
$$B_{a,b,i,\mu}(g,g^{(1)},\cdots,g^{(7)}))= (\bbJ_1\bbG(z)\bbJ_2^T)_{ab}-(\bbJ_1\bbF(z)\bbJ_2^T)_{ab}.$$
 When $k(g)\geq1$ and $k(g^{(j)})\ge1$, define
 \begin{equation}\label{a16}B_{a,b,i,\mu,\bbR_{2,\cdots,k},\mathcal{R}_{11,\cdots,7k+1}}(g,g^{(1)},...,g^{(7)})=
C_{a,b,i,\mu,\bbR_{2,\cdots,k},\mathcal{R}_{11,\cdots,7k+1}}(g,g^{(1)},\cdots,g^{(7)}))
\end{equation}$$-\sum_{j=1}^7(\bbJ_1\mathcal{R}_{j1})_{(a_{j1}b_{j1})}(\mathcal{R}_{j2})_{(a_{j2}b_{j2})}...(\mathcal{R}_{jk})_{(a_{jk}b_{jk})}
(\mathcal{R}_{jk+1}\bbJ_2^T)_{(a_{jk+1}b_{jk+1})},
$$
with
\begin{equation}\label{0526.2}C_{a,b,i,\mu,\bbR_{2,\cdots,k},\mathcal{R}_{11,\cdots,7k+1}}(g,g^{(1)},\cdots,g^{(7)}))=(\bbJ_1G \bbA_5)_{(a_1b_1)}(\bbR_2)_{(a_2b_2)}...(\bbR_k)_{(a_kb_k)}(\bbA_4\bbG\bbJ_2^T)_{(a_{k+1}b_{k+1})},\end{equation}
where $\bbR_j (2\le j\le n)$ has the expression of $\bbR_j=\bbA_4\bbG \bbA_5$
with $\bbA_4\in \{1, \bold\Delta \}$, $\bbA_5 \in \{1,\bold\Delta^T\}$ and the non-zero block $\mathcal{R}_{jl}$ belongs to $\mathcal{Q}(k)$ in (\ref{1202.1}).
Moreover the selection of $1$ and $\bold\Delta$ in $\bbA_4$ and $\bbA_5$ is subject to the constraint that the total number of $\bold\Delta$ and $\bold\Delta^T$ contained in  $B_{a,b,i,\mu,\bbR_{2,\cdots,k},\mathcal{R}_{11,\cdots,7k+1}}(g,g^{(1)},...,g^{(7)})$ is $k$. One should also notice that if $k(g)=1$, the terms $R_j$ will disappear. 
It follows from (\ref{a10}) that
 \begin{equation}\label{a11}\|\mathcal{R}_{jl}\|\prec 1.
 \end{equation}


It is easy to see that
\begin{equation}
\label{a13}
\frac{\partial \bbG}{\partial X_{i\mu}}=-\bbG(\bbe_{\mu+p}\bbe_i^T\bold\Delta+\bold\Delta^T\bbe_i\bbe_{\mu+p}^T)\bbG,
\end{equation}
(one may see (\ref{a12}) for the derivative ). We first demonstrate how to apply the above definitions about groups $g^{(j)}$ and $B_{a,b,i,\mu,\bbR_{2,\cdots,k},\mathcal{R}_{11,\cdots,7k+1}}(g,g^{(1)},...,g^{(7)})$ and hence write
\begin{eqnarray}\label{a38}
&&\frac{\partial^k }{\partial (X_{i\mu})^k}\Big([(\bbJ_1\bbG(z)\bbJ_2^T)_{ab}-(\bbJ_1\bbF(z)\bbJ_2^T)_{ab}]\mathcal{T}_n(\bbX)\Big)\\
&&=(-1)^k\sum_{g\in \mathfrak{G}_{k},g^{(j)}\in \mathfrak{G}_{jk} \atop{\bbR_i, i=2,...,k \atop \mathcal{R}_{jl}, j=1,..7, l=1,...,k+1} }B_{a,b,i,\mu,\bbR_{2,\cdots,k},\mathcal{R}_{11,\cdots,7k+1}}(g,g^{(1)},...,g^{(7)})\mathcal{T}_n(\bbX)+O_{\prec}(0),\nonumber
\end{eqnarray}
where the term $O_{\prec}(0)$ comes from the derivative on $\mathcal{T}_n(\bbX)$ by (\ref{1219.8}), (\ref{1219.2}) and (\ref{0304.5}). 
To simplify the notations, we furthermore omit $\bbR_{2\cdots,k},\mathcal{R}_{11,...,7k+1}, g^{(1)},...,g^{(7)}$ in the sequel and write
\begin{equation}\label{a36}B_{a,b,i,\mu}(g)=B_{a,b,i,\mu,\bbR_{2,\cdots,k},\mathcal{R}_{11,\cdots,7k+1}}(g,g^{(1)},...,g^{(7)}),\end{equation}
\begin{equation}\label{a37}C_{a,b,i,\mu}(g)=C_{a,b,i,\mu,\bbR_{2,\cdots,k},\mathcal{R}_{11,\cdots,7k+1}}(g,g^{(1)},...,g^{(7)}),\end{equation}
(here one should notice that the sizes of $g$ and $g^{(j)}$ are the same according to definition (\ref{a16})).
More generally we furthermore have
\begin{eqnarray}\label{a14}
&\frac{\partial^k }{\partial (X_{i\mu})^k}\Big(|F_{ab}(\bbX)|^{2q}\Big)=(-1)^k\sum\limits_{k_1,...,k_{q}, \tilde k_1,...,\tilde k_{q}\in \mathbb{N} \atop
\sum_{r}(k_r+\tilde k_r)=k}\frac{k!}{\prod_r k_r!\tilde k_r!}
\\ &\times\prod\limits_{r=1}^{q}(\sum\limits_{g_r\in\mathfrak{G}_{k_r}\cup\mathfrak{G}_{jk_r}\atop{\bbR_i, i=2,...,k \atop \mathcal{R}_{jl}, j=1,..7, l=1,...,k+1} }\sum\limits_{\tilde g_r\in \mathfrak{G}_{\tilde k_r}\cup\mathfrak{G}_{ j\tilde k_r}\atop{\bar{\bbR}_i, i=2,...,k \atop \mathcal{\bar {R}}_{jl}, j=1,..7, l=1,...,k+1}}B_{a,b,i,\mu}(g_r)\overline{B_{a,b,i,\mu}(\tilde g_r)}\mathcal{T}^2_n(\bbX))
+O_{\prec}(0),\nonumber
\end{eqnarray}
where $g_r\in \mathfrak{G}_{ k_r}\cup\mathfrak{G}_{jk_r}$ means that the groups associated with the derivatives of $\bbG(z)$ belong to $\mathfrak{G}_{k_r}$ and the groups associated with the derivatives of $\bbF(z)$ belong to $\mathfrak{G}_{ jk_r}$. In view of (\ref{a14}) and (\ref{a8}) to prove (\ref{1123.7}) it then suffices to show that
\begin{equation}\label{1117.2}
n^{-k/2}\sum_{i=1}^p\sum_{\mu=1}^n\mathbb{E}\left[\prod_{r=1}^{q}B_{a,b,i,\mu}(g_r)\overline{B_{a,b,i,\mu}(\tilde g_r)}\mathcal{T}^{2q}_n(\bbX)\right]=O((n^{24\delta}\Psi)^{2q}+\|\mathbb{E}|F(\bbX)|^{2q}\|_{\infty}),
\end{equation}
for $4\le k \le16q$ and groups $g_r\in\mathfrak{G}_{k_r}\cup\mathfrak{G}_{jk_r}$, $\tilde g_r\in\mathfrak{G}_{\tilde k_r}\cup\mathfrak{G}_{j\tilde k_r}$ satisfying $\sum_{r}(k(g_r)+\tilde k(g_r))=k$. To simplify notations, we drop complex conjugates (which will complicate the notations but the proof is the same) from the left hand side of (\ref{1117.2}). Without loss of generality, suppose there are (2q-l) terms such that $k(g_r)=0$ and denote each of them by $g_0$. (\ref{1117.2}) reduces to
\begin{eqnarray}\label{1117.3}
n^{-k/2}\sum_{i=1}^p\sum_{\mu=1}^n\mathbb{E}\left[B_{a,b,i,\mu}(g_0)^{2q-l}\prod_{r=1}^lB_{a,b,i,\mu}(g_r)\mathcal{T}^{2q}_n(\bbX)\right]=O((n^{24\delta}\Psi)^{2q}+\|\mathbb{E}|F(\bbX)|^{2q}\|_{\infty}),
\end{eqnarray}
for $4\le k \le16q$ and groups $g_r\in\mathfrak{G}_{k_r}\cup\mathfrak{G}_{jk_r}$ satisfying $\sum_{r}k(g_r)=k$ and $k(g_0)=0$.

To estimate the left hand of (\ref{1117.3}), we introduce the notations
$$\mathcal{H}_i = \mathcal{H}_{1i}+\mathcal{H}_{abi},\ \   \mathcal{H}_{1i}= |(\bbJ_1\bbG\bold\Delta)_{ai}|+|(\bold\Delta^T\bbG\bbJ_2^T)_{ib}|, \ \ \mathcal{H}_{abi}= \sum_{\mathcal{R}\in \mathcal{Q}(k)}(|(\bbJ_1\mathcal{R})_{ai}|+|(\mathcal{R}\bbJ_2^T)_{ib}|),$$
 $$\mathcal{H}_{\mu} = \mathcal{H}_{1\mu}+\mathcal{H}_{a\mu},\ \  \mathcal{H}_{1\mu}= |(\bbJ_1\bbG)_{a (\mu+p)}|+|(\bbG\bbJ_2^T)_{(\mu+p) b}|, \ \ \mathcal{H}_{a\mu}=\sum_{\mathcal{R}\in \mathcal{Q}(k)}(|(\bbJ_1\mathcal{R})_{a\mu}|+|(\mathcal{R}\bbJ_2^T)_{\mu a}|,$$
where the lower indices $i$ and $\mu$ at $\bbJ_1\mathcal{R}$ and $\mathcal{R}\bbJ_2^T$ respectively represent the index $i$, $i+p-m$, $i+p$, and $\mu$, $\mu+p-n$ or $\mu+p$ depending on which block we consider (or differentiate). 
By (\ref{1116.7}), (\ref{0304.5}) and (\ref{a10}) we have
\begin{equation}\label{a15}\mathcal{H}_i+\mathcal{H}_{\mu}\prec n^{2\delta}.\end{equation}
 Moreover for $g_r\in \mathfrak{G}_{k_r}\cup\mathfrak{G}_{jk_r}$, we similarly obtain from (\ref{1116.7}), (\ref{0304.5}), (\ref{a10}) and definition (\ref{a16}) that 
\begin{eqnarray}\label{1117.4}
|B_{a,b,i,\mu}(g_r)|\prec n^{2\delta (k(g)+1)},
\end{eqnarray}
(recall $k(g)=k(g^{(j)})$ from definition (\ref{a16})).
Likewise, for $k(g)\ge 1$, we have
\begin{eqnarray}\label{1117.5}
|B_{a,b,i,\mu}(g_r)|\prec (\mathcal{H}_i^2+\mathcal{H}_{\mu}^2)n^{2\delta(k(g_r)-1)},
\end{eqnarray}
while k(g)=1,
\begin{eqnarray}\label{1117.6}
|B_{a,b,i,\mu}(g_r)|\prec \mathcal{H}_i\mathcal{H}_{\mu}.
\end{eqnarray}

When $k\le 2l-2$ there must exist at least 2 $g_r$'s satisfying $k(g_r)=1$ because $\sum_{r=1}^l k(g_r)= k \le 2l-2$. It follows from (\ref{1117.4}) and (\ref{1117.6}) that
\begin{eqnarray}\label{1117.7}
|B_{a,b,i,\mu}(g_0)^{2q-l}\prod_{r=1}^lB_{a,b,i,\mu}(g_r)|&\prec & n^{2\delta(k+l)}F^{2q-l}_{ab}(\bbX)\Big(I(k\ge 2l-1)(\mathcal{H}_i^2+\mathcal{H}_{\mu}^2)\non
&+&
I(k\le 2l-2)\mathcal{H}_i^2\mathcal{H}_{\mu}^2\Big).
\end{eqnarray}
Recalling  the notation $\bold\Delta$ in (\ref{a6}) we have $\|\bold\Delta\bold\Delta^T\|\leq M$. 
In view of (\ref{a10}) it is easy to see that
\begin{equation}\label{g28}\sum_{i=1}^p\mathcal{H}_{1i}^2+\sum_{\mu=1}^n \mathcal{H}_{1\mu}^2\prec n\phi_a^2+n\phi_b^2,\end{equation}
\begin{equation}\label{h28}\sum_{i\ \text{or}\ a \ \text{or} \ b}^p\mathcal{H}_{abi}^2+\sum_{a\ \text{or}\ \mu}^n \mathcal{H}_{a\mu}^2\prec 1,\end{equation}
where $i\ \text{or}\ a \ \text{or} \ b$ means the summation over either $i$ or $a$ or $b$ and
$$\phi_a^2= \frac{\Im(\bbJ\bbG\bbJ^*)_{aa}+\eta}{n\eta},$$
with $\bbJ\in \mathcal{L}$ defined in (\ref{a5}). This implies that
\begin{equation}\label{a28}\sum_{i=1}^p\mathcal{H}_{i}^2+\sum_{\mu=1}^n \mathcal{H}_{\mu}^2\prec n\phi_a^2+n\phi_b^2.\end{equation}
From (\ref{a19}) and (\ref{a21})
$$\phi_a^2= \frac{\Im(\bbJ\bbF\bbJ^*)_{aa}+ \Im(\bbJ(\bbG-\bbF)\bbJ^*)_{aa}+\eta}{n\eta}\prec \frac{\Im m+ \Im(\bbJ(\bbG-\bbF)\bbJ^*)_{aa}}{n\eta}.$$
Recalling the definition of $\Psi$ in (\ref{a17}) we conclude that
\begin{eqnarray}\label{1120.5}
\phi_a^2\prec \Psi(\Psi+F_{aa}(\bbX)).
\end{eqnarray}
By (\ref{a27}), (\ref{a26}), (\ref{1119.12}), (\ref{1124.13}), the definition of $\mathcal{T}_n(\bbX)$ and definition (\ref{a16}) we have
\begin{eqnarray}\label{0312.1}
|B_{a,b,i,\mu}(g_0)B_{a,b,i,\mu}(g_r)\mathcal{T}_n(\bbX)|\le n^{M_0}.
\end{eqnarray}
From (\ref{1117.7}), (\ref{a28}), (\ref{0312.1}) and (\ref{b6}) the left hand side of (\ref{1117.3}) is bounded in absolute value by
\begin{eqnarray}\label{1117.8}
n^{-k/2+2}n^{3\delta(k+l)}\mathbb{E}F_{ab}^{2q-l}(\bbX)\Big[I(k\ge 2l-1)(\phi_a^2+\phi_b^2)+I(k\le 2l-2)(\phi_a^4+\phi_b^4)\Big]+n^{-D}.
\end{eqnarray}
Set
$$F^{2q}_1= F^{2q}_{aa}+F^{2q}_{ba}+F^{2q}_{ab}.$$
We conclude from (\ref{1120.5})-(\ref{0312.1}) and (\ref{b6}) that the left hand side of (\ref{1117.3}) is bounded in absolute value by
\begin{eqnarray}\label{1120.6}
  \begin{cases}
  n^{3\delta(k+l)}\left(\Psi^{k-2}\mathbb{E}F_1^{2q-l}(\bbX)+\Psi^{k-3}\mathbb{E}F_1^{2q-l+1}(\bbX)\right)+n^{-D},  &\mbox{if}\ \ k\ge2l-1, \\
   n^{3\delta(k+l)}\left(\Psi^k\mathbb{E}F_1^{2q-l}(\bbX)+\Psi^{k-2}\mathbb{E}F_1^{2q-l+2}(\bbX)\right)+n^{-D},  &\mbox{if}\ \ k\le2l-2.
   \end{cases}
\end{eqnarray}
Since $l\le k$ (\ref{1120.6}) is further bounded by
\begin{eqnarray}\label{1120.7}
  \begin{cases}
  (n^{24\delta}\Psi)^{k-2}\mathbb{E}F_1^{2q-l}(\bbX)+(n^{24\delta}\Psi)^{k-3}\mathbb{E}F_1^{2q-l+1}(\bbX)+n^{-D},  &\mbox{if}\ \ k\ge2l-1, \\
   (n^{24\delta}\Psi)^k\mathbb{E}F_1^{2q-l}(\bbX)+(n^{24\delta}\Psi)^{k-2}\mathbb{E}F_1^{2q-l+2}(\bbX)+n^{-D},  &\mbox{if}\ \ k\le2l-2.
   \end{cases}
\end{eqnarray}
This ensures that the left hand side of (\ref{1120.6}) is bounded in absolute value by
\begin{eqnarray}\label{1120.8}
(n^{24\delta}\Psi)^{l}\mathbb{E}F_1^{2q-l}(\bbX)+(n^{24\delta}\Psi)^{l-1}\mathbb{E}F_1^{2q-l+1}(\bbX)+(n^{24\delta}\Psi)^{l-2}I(l\ge 3)\mathbb{E}F_1^{2q-l+2}(\bbX)+n^{-D},
\end{eqnarray}
where we use the facts that $k\geq l+2$ when $k\geq 4$ and $k\geq 2l-1$ and that $k\geq l$ and $l\geq 3$ when $k\le2l-2$ and $k\geq 4$.
When $l\ge 2$, (\ref{1117.3}) follows from (\ref{1120.8}), the facts that $(E|X|^r)^{1/r}$ is a nondecreasing function of $r$ and that $n^{-D}\leq (n^{24\delta}\Psi)^{2q}$ for sufficiently large $D$. For example
\begin{equation}\label{a30}
(n^{24\delta}\Psi)^{l-2}\mathbb{E}F_1^{2q-l+2}(\bbX)\leq \Big(\mathbb{E}\Big(F_1^{2q-l+2}(\bbX)\Big)^{\frac{2q}{2q-l+2}} \Big)^{\frac{2q-l+2}{2q}} \Big((n^{24\delta}\Psi)^{2q}\Big)^{\frac{l-2}{2q}}
\end{equation}$$\leq \Big(\mathbb{E}\Big(F_1^{2q}(\bbX)\Big)+(n^{24\delta}\Psi)^{2q}\Big)^{\frac{2q-l+2+l-2}{2q}}.
$$ When $l=1$, the first term can be handled similarly and the second term directly implies (\ref{1117.3}).
Thus we have proved (\ref{0310.1}) in Theorem \ref{1119-3}.

\subsection{Local law (\ref{0310.2})}

This subsection is to prove (\ref{0310.2}) in Theorem \ref{1119-3}, i.e.
\begin{eqnarray}\label{1219.3}
|\underline{m}_n(z)-\underline{m}(z)|\prec \frac{1}{n\eta}.
\end{eqnarray}

As pointed out in the paragraph containing (\ref{b4}), (\ref{1219.3}) holds when the underlying distribution of $X_{ij}$ of $\bbX$ is the standard Gaussian distribution. Moreover, we need to use the interpolation method to prove (\ref{0310.2}) for the general distributions as in proving (\ref{0310.1}). However we do not need induction on the imaginary part of $z$ unlike before due to existence of (\ref{0310.1}).

In order to prove (\ref{1219.3}) it suffices to show that
\begin{eqnarray}\label{1219.5}
|\underline{m}_n(z)-\underline{m}(z)|\mathcal{T}_n(\bbX)\prec \frac{1}{n\eta}.
\end{eqnarray}
As in (\ref{a5}) we introduce the notation $\hat F^{2q}(\bbX,z)$ as follows
$$\hat F^{2q}(\bbX,z)=|\underline{m}_n(z)-\underline{m}(z)|^{2q}\mathcal{T}^{2q}_n(\bbX)=|\frac{1}{m}\sum_{k}^m G_{kk}(z)-\underline{m}(z)|^{2q}\mathcal{T}^{2q}_n(\bbX).$$
Checking on Lemmas \ref{1216-3}, \ref{1124.14*}, \ref{1119-5}, (\ref{b7}) and (\ref{1123.7}) in the last section we only need to show
\begin{equation}\label{a34}n^{-k/2}\sum_{i=1}^p\sum_{\mu=1}^n\mathbb{E}\Big[(\frac{\partial}{\partial \bbX_{i\mu}})^k\hat F^{2q}(\bbX,z)\Big]=O((n^{\delta}\Psi^2)^{2q}+\|\hat
F^{2q}(\bbX,z)\|_{\infty}),\ k\geq 4\end{equation}
where $\delta$ is sufficiently small so that $n^\delta$ is smaller than $n^\varepsilon$ before $(\ref{1219.5})$ due to the definition of the partial order.
Applying the definition of $B_{a,b,i,\mu}$ in the previous section with $\bbJ_1=\bbJ_2=1$ and $a=b=k$, it suffices to show that
\begin{eqnarray}\label{1119.2}
n^{-k/2}\sum_{i=1}^p\sum_{\mu=1}^n\mathbb{E}\prod_{h=1}^{2q}\left[\frac{1}{m}\sum_{k=1}^mB_{k,k,i,\mu}(g_h)\right]=O((n^{\delta}\Psi^2)^{2q}+\|\mathbb{E}\hat F^{2q}(\bbX,z)\|_{\infty}).
\end{eqnarray}
Notice that (\ref{0310.1}) holds uniformly for any unit determinant vectors $\bbv$ , $\bbw$ and $z\in S$. This, together with (\ref{1120.5}), implies that
$$
\phi_s^2\prec \Psi^2.
$$
We then conclude from (\ref{1117.7}) and (\ref{a28}) that
\begin{eqnarray}\label{1119.4}
\frac{1}{m}\sum\limits_{k=1}^mB_{k,k,i,\mu}(g_h)\prec \Psi^2, \ \ \text{for} \ \ g(w)\ge 1.
\end{eqnarray}
For future use, recalling (\ref{0526.2}) and (\ref{a37}) we also obtain from (\ref{h28}) and (\ref{1119.4})
\begin{eqnarray}\label{0526.1}
\frac{1}{m}\sum\limits_{k=1}^m C_{k,k,i,\mu}(g_h)\prec \Psi^2, \ \ \text{for} \ \ g(w)\ge 1.
\end{eqnarray}
 As in (\ref{1117.7}) we then have
 $$
|\frac{1}{m}\sum\limits_{k=1}^mB_{k,k,i,\mu}(g_0)^{2q-l}\prod_{r=1}^l\frac{1}{m}\sum\limits_{k=1}^mB_{k,k,i,\mu}(g_r)|\prec \hat F^{2q-l}(\bbX,z)\Psi^{2l}.
 $$
(\ref{1119.2}) and hence (\ref{0310.2}) then follow via (\ref{b6}) and an argument similar to (\ref{a30}).

\subsection{Convergence rate on the right edge and  universality}

\subsubsection{Convergence rate on the right edge}
The aim of this subsection is to prove the following Lemma.
\begin{lem}\label{0525-1} Denote by $\lambda_1$ the largest eigenvalue of $\bbA$ in (\ref{0313.1}). Under conditions of Theorem \ref{t1},
$$\lambda_1-\hat \mu_m=O_{\prec}(n^{-\frac{2}{3}}).$$
\end{lem}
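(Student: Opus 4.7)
The plan is to derive Lemma \ref{0525-1} from the strong local law of Theorem \ref{1119-3} by the now-standard strategy for extracting edge rigidity from a Stieltjes-transform estimate, in the spirit of \cite{LHY2011}, \cite{BPZ2014a} and \cite{KY14}.

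First I would verify that the limiting density $\rho$ associated with $\underline m(z)$ has a square-root edge at $\hat\mu_m$, i.e.\ $\rho(x)\asymp (\hat\mu_m-x)_{+}^{1/2}$ near $\hat\mu_m$, and consequently
\begin{equation*}
\Im \underline m(\hat\mu_m+\kappa+i\eta)\asymp \frac{\eta}{\sqrt{\kappa+\eta}},\qquad \kappa\geq 0,\ \eta>0.
\end{equation*}
This square-root edge is inherited from the deformed MP edge of $\bbD^{-1/2}\bbZ\bbZ^*\bbD^{-1/2}$ (of which $\bbA$ is a distributional copy in the Gaussian case) and is already implicit in the construction of $\hat c_m$ and $\hat\mu_m$ in Section 5; the estimate (\ref{a24}) keeps us a positive distance from the singularity of the integrand in (\ref{b8}), so the edge is non-degenerate with high probability.

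Next, for any small $\varepsilon>0$ I would fix $E=\hat\mu_m+n^{-2/3+\varepsilon}/2$ and $\eta=n^{-2/3-\varepsilon/10}$, so that $z=E+i\eta\in E_+$ and Theorem \ref{1119-3}(ii) gives $|\underline m_n(z)-\underline m(z)|\prec 1/(n\eta)$. Using a Helffer-Sjöstrand representation of a smooth cut-off of the counting function $N(E')=\#\{i:\lambda_i\geq E'\}$ in terms of $\Im \underline m_n$ along a contour at height $\eta$, the $1/(n\eta)$ error integrates to $|N(E)-n\int_E^{\infty}\rho|\prec n^{-\varepsilon/20}$. Since the theoretical counting integral vanishes for $E>\hat\mu_m$ and $N(E)$ is integer-valued, this forces $N(E)=0$ with high probability, which is the upper bound $\lambda_1\leq \hat\mu_m+n^{-2/3+\varepsilon}$. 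The symmetric choice $E=\hat\mu_m-n^{-2/3+\varepsilon}$ produces a theoretical counting of order $n\cdot(n^{-2/3+\varepsilon})^{3/2}=n^{3\varepsilon/2}$, which dominates the same error and forces $N(E)\geq 1$, giving the matching lower bound $\lambda_1\geq \hat\mu_m-n^{-2/3+\varepsilon}$.

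The main obstacle I anticipate is the careful execution of the contour integral in the Helffer-Sjöstrand step, so that the $1/(n\eta)$ local-law error integrates to a sub-polynomial quantity while the contour stays inside the region $E_+$ where Theorem \ref{1119-3} applies. This bookkeeping is routine but delicate; it is essentially identical to the treatment in \cite[Section~3]{BPZ2014a}, and can be invoked once the square-root edge of $\underline m$ and the operator-norm bound $\|\bbA\|\prec 1$ (available from Lemma \ref{1121-1} together with (\ref{a26})) have been checked in the present setting.
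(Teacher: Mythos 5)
There is a genuine gap in your upper-bound step. From the local law $|\underline{m}_n(z)-\underline{m}(z)|\prec (n\eta)^{-1}$ alone, the Helffer--Sj\"ostrand argument yields only $|N(E)-n\int_E^{\infty}\rho|\prec 1$, not $\prec n^{-\varepsilon/20}$: the $(n\eta)^{-1}$ error, integrated over the contour, produces a quantity of order $n^{\varepsilon'}$ for every $\varepsilon'>0$ but not one tending to zero. Since ``$\prec 1$'' permits up to $n^{\varepsilon'}$ stray eigenvalues beyond the edge, you cannot conclude $N(E)=0$ from it, and the claimed upper bound $\lambda_1\le\hat\mu_m+n^{-2/3+\varepsilon}$ does not follow. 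The missing ingredient is the \emph{improved} local law outside the support: the paper strengthens the moment estimate to $\mathbb{E}|F_{ab}(\bbX,z)|^{2q}\le (n^{\delta}\Psi^{2})^{2q}$ when $\Re z$ lies outside the spectrum (see (\ref{a32})--(\ref{a33}) and (\ref{a34})), which gives $|\underline{m}_n-\underline{m}|\ll (n\eta)^{-1}$ there (display (\ref{1119.8})). With the choice $\eta=n^{-1/2-\tau/4}\kappa^{1/4}$ and the square-root behaviour $\Im\underline{m}\asymp\eta/\sqrt{\kappa+\eta}\ll(n\eta)^{-1}$, this forces $n\eta\,\Im\underline{m}_n\ll 1$, hence no eigenvalue in $[E-\eta,E+\eta]$; sweeping $E$ over $[\hat\mu_m+n^{-2/3+\tau},\hat\mu_m+\tau^{-1}]$ and invoking the crude norm bound $\|\bbA\|\le M$ closes the argument. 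Any rigidity-based variant of your proposal would need this same $\Psi^{2}$ refinement before the counting error becomes $o(1)$.

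A secondary, smaller discrepancy: the paper does not prove the lower bound by counting eigenvalues in $[\hat\mu_m-n^{-2/3+\varepsilon},\hat\mu_m]$; it imports it from Theorem 4.1(ii) of \cite{BPWZ2014b}, using (\ref{a24}) and the distributional identity (\ref{b4}). Your counting argument for the lower bound is in principle viable even with the $O_{\prec}(1)$ error (since $n\kappa^{3/2}=n^{3\varepsilon/2}$ dominates $n^{\varepsilon'}$ for $\varepsilon'$ small), but it would still require you to actually establish the square-root edge asymptotics and the counting-function estimate in this non-standard setting rather than assert them.
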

\begin{proof}
The approach is similar to that in \cite{LHY2011}, (\cite{PY11}) and \cite{BPWZ2014b}.   Checking on the proof of Theorem 4.1 in \cite{BPWZ2014b} carefully,  
we find that (ii) in Theorem 4.1 in \cite{BPWZ2014b} and hence the lower bound of $\lambda_1$ of Lemma \ref{0525-1} still hold in our case because of (\ref{a24}) and (\ref{b4}).
It then suffices to prove that for any small positive constant $\tau$
\begin{eqnarray}\label{1119.5}
\lambda_1\le \hat \mu_m+n^{-2/3+\tau}
\end{eqnarray}
holds with high probability. 
Note that by (\ref{b5}) and Lemma \ref{1121-1}
\begin{equation}\label{a35}\|\bbA\|\le M\end{equation}
with high probability for sufficient large positive constant $M$ (here one should notice that $\|\bbD^{-1}\|\leq M$ with high probability due to (\ref{6.2}) and (\ref{6.3})).
For a suitably small $\tau$, set $z=E+i\eta$ and $\kappa=|E-\hat \mu_m|$ where $E\in[\hat \mu_m+n^{-2/3+\tau},\hat \mu_m+\tau^{-1}]$ and $\eta=n^{-1/2-\tau/4}\kappa^{1/4}$. By Lemma 2.3 of \cite{BPWZ2014b}, we have
\begin{equation}\label{1119.7}
\Im \underline{m} \asymp \frac{\eta}{\sqrt{\kappa+\eta}}\ll \frac{1}{n\eta},
\end{equation}
where $\ll$ means much less than.

We furthermore claim that with high probability
\begin{eqnarray}\label{1119.8}
|\underline{m}_n-\underline{m}|\ll\frac{1}{n\eta}
\end{eqnarray}
Indeed, (\ref{1119.8}) holds when $\bbX$ reduces to $\bbX^0$ due to (4.6) in \cite{BPWZ2014b}, (\ref{1119.7}) and (\ref{b4}). For the general distributions, (\ref{1119.8}) follows from (\ref{a34}) and (\ref{a33}).
It follows from (\ref{1119.7}) and (\ref{1119.8}) that with high probability
$$
\Im(\underline{m}_n)\ll \frac{1}{n\eta}.
$$
Moreover note that with high probability
$$
\sum\limits_iI(E-\eta\leq\lambda_i\leq E+\eta)\leq Mn\eta \Im(\underline{m}_n)\ll 1.
$$
As a consequence there is no eigenvalue in $[E-\eta,E+\eta]$ with high probability. This, together with (\ref{a35}), ensures (\ref{1119.5}).

\end{proof}

\subsubsection{Universality}
The aim of this subsection is to prove (ii) of Theorem \ref{t1}. By (\ref{0529.1}) and (\ref{0529.2}), it suffices to prove edge universality at the rightmost edge of the support $\hat \mu_m$. In other words, the asymptotic distribution of $\lambda_1$ is not affected by the distribution of the entries of $\bbX$ under the 3rd moment matching condition. Similar to theorem 6.4 of \cite{LHY2011}, we first show the following green function comparison theorem.
\begin{thm}\label{1221-1}
There exists $\varepsilon_0>0$. For any $\ep <\varepsilon_0$, set $\eta=n^{-2/3-\ep}$, $E_1$, $E_2\in \mathbb{R}$ with $E_1<E_2$ and
$$|E_1-\hat \mu_m|,|E_2-\hat \mu_m|\le n^{-2/3+\ep}.$$
Suppose that $K:\mathbb{R}\rightarrow \mathbb{R}$ is a smooth function with bounded derivatives up to fifth order.
Then there exists a constant $\phi>0$ such that for large enough n 
\begin{eqnarray}\label{1119.9}
|\mathbb{E}K(n\int_{E_1}^{E_2}\Im m_{\bbX^1}(x+i\eta)dx)-\mathbb{E}K(n\int_{E_1}^{E_2}\Im m_{\bbX^0}(x+i\eta)dx)|\le n^{-\phi},
\end{eqnarray}
(see Definition \ref{1222-1} or (\ref{1129.1}) for $\bbX^1$ and $\bbX^0$).
\end{thm}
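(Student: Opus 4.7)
The plan is to adapt the Green-function comparison strategy of Erd\H{o}s--Yau--Yin to the linearized resolvent $\bbG(z)$ defined through (\ref{1205.1}) and (\ref{a17}). Fix $\bbY$ and condition on the high-probability event $S_\xi$ throughout, so that only $\bbX$ varies. Set
\[
F(\bbX) = K\Big(n\int_{E_1}^{E_2}\Im\, m_{\bbX}(x+i\eta)\,dx\Big),
\]
where $m_{\bbX}(z)=\underline m_n(z)=\tfrac{1}{m}\sum_{k=1}^m G_{kk}(z)$ is the Stieltjes transform of the matrix $\bbA$ from (\ref{0313.1}), expressed through the upper-left block of $\bbG(z)$. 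The entrywise interpolation of Lemma \ref{1124.14*} gives
\[
\mathbb{E}F(\bbX^1)-\mathbb{E}F(\bbX^0) = \int_0^1\! dt\sum_{i=1}^p\sum_{\mu=1}^n\Big[\mathbb{E}F\big(\bbX^{t,X_{i\mu}^1}_{(i\mu)}\big)-\mathbb{E}F\big(\bbX^{t,X_{i\mu}^0}_{(i\mu)}\big)\Big],
\]
so the game reduces to estimating each one-entry replacement.

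For each $(i,\mu)$ and $u\in\{0,1\}$, I would Taylor expand $f_{(i\mu)}(\lambda):=F(\bbX^{t,\lambda}_{(i\mu)})$ to order five around $\lambda=0$. The moment-matching condition (\ref{1129.1}) makes the zeroth through third order contributions to the two sums agree up to an $\exp(-(\log p)^C)$ error per entry, hence a total error of order $pn\cdot\exp(-(\log p)^C)$, which is negligible. The fourth order terms come with prefactor $\mathbb E(X^u_{i\mu})^4=O(n^{-2})$ and the fifth order Lagrange remainder with $O(n^{-5/2})$, so the theorem reduces to proving
\[
n^{-2}\sum_{i,\mu}\mathbb E\big|f^{(4)}_{(i\mu)}(0)\big|\;+\;n^{-5/2}\sum_{i,\mu}\sup_{|\lambda|\prec n^{-1/2}}\mathbb E\big|f^{(5)}_{(i\mu)}(\lambda)\big|\;=\;O(n^{-\phi}).
\]

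To bound these derivatives, apply the chain rule: $f^{(k)}_{(i\mu)}$ is a polynomial in the bounded derivatives $K^{(j)}$ ($j\le k$) and in $n\,\partial_{i\mu}^{\,j}\!\int_{E_1}^{E_2}\Im\, m_{\bbX^t}(x+i\eta)\,dx$. Each $\partial_{i\mu}$ acting on $\bbG(x+i\eta)$ yields, via (\ref{a13}), a factor involving the entries of $\bbG\bold\Delta$ and $\bold\Delta^T\bbG$ indexed by $i$ or $\mu+p$. Exactly as in the proof of Theorem \ref{1119-3}, after invoking the anisotropic local law (\ref{0310.1}) and the Ward-type identity (\ref{1124.13}), the sum $\sum_{i,\mu}$ of the squared factors produces $(\Im m(x+i\eta)+\eta)/\eta$. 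Integrating over $x\in[E_1,E_2]$ then converts a $1/\eta$ loss into the gain $|E_2-E_1|\le 2n^{-2/3+\varepsilon}$, while the control $n\int_{E_1}^{E_2}\Im\, m(x+i\eta)\,dx\prec n^{\varepsilon}$ that follows from Lemma \ref{0525-1} and the local law (\ref{0310.2}) keeps the argument of $K$ bounded. Combining these with $\eta=n^{-2/3-\varepsilon}$ and the a priori bound (\ref{1119.12}) to handle the rare events where the local law fails, a direct power count yields the desired $n^{-\phi}$ bound for some small $\phi>0$ once $\varepsilon$ is taken small enough.

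The main obstacle is the combinatorial and numerical bookkeeping of step three: $f^{(k)}_{(i\mu)}$ produces a large but finite family of products of resolvent entries of exactly the type handled by the groups $B_{a,b,i,\mu}(g)$ in (\ref{a36}), and for each one the local-law estimate must be carefully converted into a usable bound after $x$-integration. The delicate point is that at the edge the derivative $\partial_{i\mu}\bbG(x+i\eta)$ is of order $1/\eta$, which is essentially the maximal size affordable after four replacements; only the combination of the sharp anisotropic local law, the Ward identity, and the $x$-integration gain saves the day. Once (\ref{1119.9}) is in hand, part (ii) of Theorem \ref{t1} follows by the standard route of \cite{LHY2011,BPZ2014a}: approximate $\mathbf 1_{\{\lambda_1\le s\}}$ by a smooth function of $n\int_{E_1}^{E_2}\Im\, m(x+i\eta)\,dx$, apply (\ref{1119.9}) to transfer the expectation from the general $\bbX$ to $\bbX^0$, and benchmark against the Gaussian result (\ref{0529.2}).
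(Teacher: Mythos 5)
Your proposal is correct and follows essentially the same route as the paper: the entrywise interpolation of Lemma \ref{1124.14*}, a fifth-order Taylor expansion in the replaced entry so that third-order moment matching kills the low orders, and control of the fourth- and fifth-order terms by combining the pointwise bound $|m^{(k)}_{\cdot}|\prec\Psi^2$ (obtained from (\ref{0526.1})--(\ref{0526.2}), i.e.\ exactly the local-law/Ward machinery of Theorem \ref{1119-3} that you invoke) with the factor $n|E_2-E_1|\le 2n^{1/3+\ep}$ from the $x$-integration and $\Psi^2\asymp(n\sqrt{\eta})^{-1}=O(n^{-2/3+\ep/2})$. The only differences are presentational (the paper packages the derivative bounds through the groups $C_{k,k,i,\mu}(g_h)$ and the cutoff $\mathcal{T}_n$ rather than redoing the Ward-identity summation inside the comparison), so no gap to report.
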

\begin{proof}
%
Unlike \cite{KY14}, \cite{LHY2011} and \cite{BPZ2014a} we use the interpolation method (\ref{1124.14}), which is succinct and powerful when proving green function comparison theorem.
In view of (\ref{b2}) and (\ref{b3}) we have
\begin{eqnarray}\label{1119.10}
&&|\mathbb{E}K(n\int_{E_1}^{E_2}\Im m_{\bbX^1}(x+i\eta)dx)-\mathbb{E}K(n\int_{E_1}^{E_2}\Im m_{\bbX^0}(x+i\eta)dx)|=\non
&&\left|\mathbb{E}K(n\int_{E_1}^{E_2}\Im m_{\bbX^1}(x+i\eta)\mathcal{T}_n(\bbX^1)dx)-\mathbb{E}K(n\int_{E_1}^{E_2}\Im m_{\bbX^0}(x+i\eta)\mathcal{T}_n(\bbX^0)dx)\right|+O(n^{-1}).\non
\end{eqnarray}
Applying (\ref{1124.14}) with $F(\bbX)=K(n\int_{E_1}^{E_2}\Im m_{\bbX}(x+i\eta)\mathcal{T}_n(\bbX ))$ we only need to bound the following
\begin{eqnarray}\label{1120.1}
\sum_{i=1}^m\sum_{\mu=1}^p \Bigg|\mathbb{E}g(X_{i\mu}^1)-\mathbb{E}g(X_{i\mu}^0))\Bigg|,
\end{eqnarray}
where
$$
g(X_{i\mu}^u)=K(n\int_{E_1}^{E_2}\Im m_{\bbX_{(i\mu)}^{t,X_{i\mu}^u}}(x+i\eta)\mathcal{T}_n(\bbX_{(i\mu)}^{t,X_{i\mu}^u})dx),\quad u=0,1.
$$
As in (\ref{1123.4}) and (\ref{1123.5}), we use Taylor's expansion up to order five to expand two functions $g(X_{i\mu}^u),u=0,1$ 
 at the point 0. Then take the difference of the Taylor's expansions of $g(X_{i\mu}^u),u=0,1$. 
  By the 3rd moments matching condition it then suffices to bound the fourth derivative
\begin{equation}\label{0524.1}
\sum_{r=1}^4 \sum_{k_1,..,k_r\in \mathbb{N}_+ \atop k_1+..+k_r=4}M_r \max_{x}|K^{(r)}(x)|  \mathbb{E}\prod_{i=1}^r \Bigg(n\int_{E_1}^{E_2}\Bigg| m_{\bbX_{(i\mu)}^{t,0}}^{(k_i)}(x+i\eta)\mathcal{T}_n(\bbX_{(i\mu)}^{t,0}) \Bigg|dx\Bigg),
\end{equation}
and the fifth derivative corresponding to the remainder of integral form
\begin{equation}\label{0524.1*}
\frac{1}{\sqrt{n}}\sum_{r=1}^5 \sum_{k_1,..,k_r\in \mathbb{N}_+ \atop k_1+..+k_r=4}M_r \max_{x}|K^{(r)}(x)|  \mathbb{E}\prod_{i=1}^r \Bigg(n\int_{E_1}^{E_2}\Bigg| m_{\bbX_{(i\mu)}^{t,\theta X_{i\mu}^u}}^{(k_i)}(x+i\eta)\mathcal{T}_n(\bbX_{(i\mu)}^{t,\theta X_{i\mu}^u }) \Bigg|dx\Bigg),
\end{equation}
where $M_r$ is a constant depending on r only, $m_{\bbX_{(i\mu)}^{t,0}}^{(k_i)}(\cdot)$ denotes the $k_i$th derivative with respect to $X_{i\mu}^u$ and $0\leq\theta\leq 1$. Here we ignore the terms involving the derivatives of $\mathcal{T}_n(\bbX_{(i\mu)}^{t,\theta X_{i\mu}^u})$ due to (\ref{b2}), (\ref{b3}) and (\ref{1119.12}). 

To investigate (\ref{0524.1}) and (\ref{0524.1*}) we claim that it suffices to prove that
\begin{eqnarray}\label{0524.2}
\Bigg(n\int_{E_1}^{E_2}\Bigg| m_{\bbX_{(i\mu)}^{u,X_{i\mu}^1}}^{(k)}(x+i\eta)\mathcal{T}_n(\bbX_{(i\mu)}^{u,X_{i\mu}^1})\Bigg|dx\Bigg)\prec(n^{\frac{1}{3}+\ep}\Psi^2),
\end{eqnarray}
where $k\ge 1$. Indeed, if (\ref{0524.2}) holds then (\ref{0524.2}) still holds if $X_{i\mu}^1$ is replaced by $\theta X_{i\mu}^1$ by checking on the argument of (\ref{0524.2}). We then conclude that the facts that $(\ref{0524.1})\prec(n^{\frac{1}{3}+\ep}\Psi^2)$ and that $(\ref{0524.1*})\prec(n^{-\frac{1}{2}+\frac{1}{3}+\ep}\Psi^2)$ follow from Lemma \ref{1123-3}, (\ref{1119.12}) and an application of (\ref{1123.4}).

By (\ref{0526.2}) and (\ref{0526.1}) we have for $k\ge 1$
$$\Bigg| m_{\bbX_{(i\mu)}^{u,X_{i\mu}^1}}^{(k)}(x+i\eta)\mathcal{T}_n(\bbX_{(i\mu)}^{u,X_{i\mu}^1})\Bigg|\prec \Psi^2,$$
which implies that $(\ref{0524.2})\prec(n^{\frac{1}{3}+\ep}\Psi^2)$. Here we would point out that the derivatives $m_{\bbX_{(i\mu)}^{u,X_{i\mu}^1}}^{(k)}(\cdot)$ are of the form $\frac{1}{m}\sum\limits_{k=1}^mC_{k,k,i,\mu}(g_h)$ from (\ref{a16}), (\ref{0526.2}), (\ref{a38}), (\ref{a37}), (\ref{a34}) and (\ref{1119.2}). By Lemma 2.3 of \cite{BPWZ2014b} we have
$$\Psi^2 \asymp \frac{1}{n\sqrt{\eta}}=O(n^{-\frac{2}{3}+\ep/2}).$$
Summarizing the above we have shown that
$$|\mathbb{E}K(n\int_{E_1}^{E_2}\Im m_{\bbX^1}(x+i\eta)dx)-\mathbb{E}K(n\int_{E_1}^{E_2}\Im m_{\bbX^0}(x+i\eta)dx)|\prec n^{-\frac{1}{3}+2\ep}.$$
The proof is complete by choosing an appropriate $\ep$. 
\end{proof}
In order to prove the Tracy-Widom law, we need to connect the probability $\mathbb{P}(\lambda_1\le E)$ with Theorem \ref{1221-1}.

By Lemma \ref{0525-1} we can fix $E^*\prec n^{-\frac{2}{3}}$ such that it suffices to consider $\lambda_1\leq \hat \mu_m+ E^*$.  Choosing $|E-\hat \mu_m|\prec n^{-\frac{2}{3}}$, $\eta=n^{-\frac{2}{3}-9\ep}$ and $l=\frac{1}{2}n^{-\frac{2}{3}-\ep}$,  then for some sufficiently small constant $\ep>0$ and sufficiently large constant D, there exists a constant $n_0(\ep,D)$ such that
\begin{eqnarray}\label{0526.3}\mathbb{E}K(\frac{n}{\pi}\int_{E-l}^{\hat \mu_m+E^*}\Im m_{\bbX^1}(x+i\eta)dx)\le \mathbb{P}(\lambda_1\le E) \le \mathbb{E}K(\frac{n}{\pi}\int_{E+l}^{\hat \mu_m+E^*}\Im m_{\bbX^1}(x+i\eta)dx)+n^{-D},\non \end{eqnarray}
where $n\ge n_0(\ep,D)$ and K is a smooth cutoff function satisfying the condition of K in Theorem \ref{1221-1}.  We omit the proof of (\ref{0526.3}) because it is a standard procedure and one can refer to \cite{LHY2011} or Corollary 5.1 of \cite{BPWZ2014b} for instance. Combining (\ref{0526.3}) with Theorem \ref{1221-1} one can prove Tracy-Widom's law directly (see the proof of Theorem 1.3 of \cite{BPZ2014a}).

\bigskip\noindent{\bf Acknowledgment}. {G. M. Pan was partially supported by a MOE Tier 2 grant 2014-T2-2-060 and by a MOE Tier 1 Grant RG25/14 at the Nanyang Technological University, Singapore.}

\end{document}